\def\cc{{\mathbb C}}
\newcommand{\sett}[1]{\left\{#1\right\}}
\newcommand{\E}{\mathbb{E}}
\newcommand{\N}{\mathcal{N}}
\newcommand{\U}{\mathcal{U}}
\def\U{\mathcal U}
\def\H{\mathcal H}
\def\L{\mathcal L}
\def\N{\mathcal N}
\def\Tr{{\rm Tr}}
\def\amslatex{$\mathcal{A}\kern-.1667em\lower.5ex\hbox{$M$}\kern-.125em\mathcal{S}$-\LaTeX}
\def\minus{\mathop{\ominus}}
\def\tensor{\mathop{\bar\otimes}}
\newtheorem{set}{set}[section]
\newtheorem{Corollary}[set]{Corollary}
\newtheorem{Definition}[set]{Definition}
\newtheorem{Lemma}[set]{Lemma}
\newtheorem{Proposition}[set]{Proposition}
\newtheorem{Remark}[set]{Remark}
\newtheorem{Theorem}[set]{Theorem}
\newcommand{\define}{\mathrel{\hbox{$\equiv$\hskip -.90em \lower .47ex \hbox{$\leftharpoondown$}}}}
\newcommand{\enifed}{\mathrel{\hbox{$\equiv$\hskip -.90em \lower .47ex \hbox{$\rightharpoondown$}}}}
\begin{document}
\title{Mixing subalgebras of finite von Neumann algebras}
\author{Jan Cameron{\small \textdagger}, Junsheng Fang {\small\textdaggerdbl}  and Kunal Mukherjee\thanks{The third author was supported in part by NSF grant DMS-0600814 during graduate studies at
Texas A\&M University.}\,\,\small{\textsection}\\
{\small \textdagger} {\small \emph{Department of Mathematics,
 Vassar College,
Poughkeepsie, NY {\rm 12604}, USA}}\\
(\small \emph{email: jacameron\@@vassar.edu})\\
{\small \textdaggerdbl} {\small\emph{ School of Mathematical Sciences,
  Dalian University of Technology,
 Dalian,  {\rm 116024}, China}}\\
(\small \emph{email: jfang@math.tamu.edu})\\
 {\small \textsection} {\small \emph{The Institute of Mathematical Sciences,
 Taramani,Chennai {\rm 600113}, India}.}\\
(\small \emph{email: kunal\@@neo.tamu.edu})
}
\date{}
 \maketitle
\begin{abstract}
 Jolissaint and Stalder introduced  definitions of mixing and weak mixing for von Neumann subalgebras of finite von Neumann algebras. In  this note, we study various algebraic and analytical properties  of  subalgebras with these mixing properties.  We prove some basic results about mixing inclusions of von Neumann algebras and establish a connection between mixing properties and normalizers of von Neumann subalgebras. The special case of mixing subalgebras arising from inclusions of  countable discrete groups finds applications to ergodic theory, in particular, a new generalization of a classical theorem of Halmos on the automorphisms of a compact abelian group.  For a finite von Neumann algebra $M$ and von Neumann subalgebras $A$, $B$ of $M$, we introduce a notion of weak mixing of $B\subseteq M$ relative to $A$. We show that weak mixing of  $B\subset M$  relative to $A$ is equivalent to the following property: if $x\in M$ and there exist a finite number of elements $x_1,\ldots,x_n\in M$ such that $Ax\subset \sum_{i=1}^nx_iB$,
 then $x\in B$.  We conclude the paper with an assortment of further examples of mixing subalgebras arising from the amalgamated free product and crossed product constructions.  \end{abstract}

 \newpage

\section{Introduction}

In~\cite{J-S}, Jolissaint and Stalder defined weak mixing and mixing for abelian von Neumann subalgebras of finite von Neumann algebras.  These properties arose as natural extensions of corresponding notions in ergodic theory in the following sense: If $\sigma$ is a measure preserving action of a countable discrete abelian group $\Gamma_0$ on a finite measure space $(X,\mu)$, then the action is (weakly) mixing in the sense of~\cite{BR} if and only if the abelian von Neumann subalgebra $L(\Gamma_0)$ is (weakly) mixing in the crossed product finite von Neumann algebra $L^\infty(X,\mu)\rtimes \Gamma_0$.

 In this note, we extend the definitions of weak mixing and mixing to general von Neumann subalgebras of finite von Neumann algebras, and study various algebraic and analytical properties of these subalgebras.  In a forthcoming note, the authors will specialize to the study of mixing properties of maximal abelian von Neumann subalgebras. If $B$ is a von Neumann subalgebra of a finite von Neumann algebra $M$, and $\E_B$ denotes the usual trace-preserving conditional expectation onto $B$, we call $B$ a \emph{weakly mixing} subalgebra of $M$ if there exists a sequence of unitary operators $\{u_n\}$ in $B$  such that

\[ \lim_{n\rightarrow\infty}\|\E_B(xu_ny)-\E_B(x)u_n\E_B(y)\|_2=0,\quad \forall x,y\in M.\]
We call $B$ a \emph{mixing} subalgebra of $M$ if the above limit is satisfied for all elements $x,y$ in $M$ and all sequences of unitary operators  $\{u_n\}$  in $B$ such that $\lim_{n\rightarrow\infty}u_n=0$ in the weak operator topology.  When $B$ is an abelian algebra, our definition of weak mixing is precisely the \emph{weak asymptotic homomorphism property} introduced by Robertson, Sinclair and Smith~\cite{RSS}. Although our definitions of weak mixing and mixing  are slightly different from those of Jolissaint and Stalder, our definitions coincide with theirs in the setting of the action of  a countable discrete group on  a probability space.  Using arguments similar to those in the proofs of Proposition 2.2 and Proposition 3.6 of~\cite{J-S}, one can show

\begin{Proposition}If $\sigma$ is a measure preserving action of a countable discrete  group $\Gamma_0$ on a finite measure space $(X,\mu)$, then the action is {\rm (}weakly{\rm )} mixing in the sense of{\rm~\cite{BR}} if and only if the von Neumann subalgebra $L(\Gamma_0)$ is {\rm (}weakly{\rm )} mixing in the crossed product finite von Neumann algebra $L^\infty(X,\mu)\rtimes \Gamma_0$.
\end{Proposition}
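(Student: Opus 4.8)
The plan is to convert both operator-algebraic conditions into decay statements for matrix coefficients of the Koopman representation of $\sigma$, and then recognize these as the classical conditions of~\cite{BR}. Write $A=L^\infty(X,\mu)$, $M=A\rtimes\Gamma_0$, $B=L(\Gamma_0)$, with canonical unitaries $\{u_g\}_{g\in\Gamma_0}$ satisfying $u_gau_g^*=\sigma_g(a)$ and trace $\tau(\sum_g a_gu_g)=\int a_e\,d\mu$; the trace-preserving expectation onto $B$ is $\E_B(\sum_g a_gu_g)=\sum_g(\int a_g\,d\mu)u_g$. Since $M$ is $\|\cdot\|_2$-generated by the monomials $au_g$ $(a\in A)$ and the map $(x,y)\mapsto\E_B(xwy)-\E_B(x)w\E_B(y)$ satisfies a continuity estimate independent of the unitary $w$ (using $\|\xi w\|_2=\|\xi\|_2$, $\|\E_B\|\le1$, and bounding $\|x\mathbin-x'\|_2\|y\|+\|x'\|\|y\mathbin-y'\|_2$ with operator-norm-bounded approximants via Kaplansky density), I would first reduce both conditions to $x=au_g$, $y=bu_h$. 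The computational heart is the identity, for $s\in\Gamma_0$,
\[\E_B\big((au_g)u_s(bu_h)\big)-\E_B(au_g)\,u_s\,\E_B(bu_h)=\Big(\textstyle\int a\,\sigma_{gs}(b)\,d\mu-\int a\,d\mu\int b\,d\mu\Big)u_{gsh},\]
obtained from $u_gu_sb=\sigma_{gs}(b)u_{gs}$. As $u_{gsh}$ is a unitary, $\|\cdot\|_2$ isolates the scalar $\big|\langle U_{gs}b,\bar a\rangle-\int a\,d\mu\int b\,d\mu\big|$, where $U$ is the Koopman representation; and for a general unitary $w=\sum_sc_su_s\in B$ (so $\sum_s|c_s|^2=1$), orthogonality of $\{u_{gsh}\}_s$ gives
\[\big\|\E_B(xwy)-\E_B(x)w\E_B(y)\big\|_2^2=\sum_s|c_s|^2\,\big|\langle U_{gs}b,\bar a\rangle-\textstyle\int a\int b\big|^2 .\]

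For mixing: if $\sigma$ is mixing then $\varphi(s):=\big|\langle U_{gs}b,\bar a\rangle-\int a\int b\big|^2\to0$ as $s\to\infty$, while any unitaries $w_n=\sum_sc_s^{(n)}u_s$ with $w_n\to0$ in WOT have $c_s^{(n)}\to0$ for each fixed $s$. Splitting $\sum_s|c_s^{(n)}|^2\varphi(s)$ over a finite set on which $\varphi$ is already small and its complement, and using $\sum_s|c_s^{(n)}|^2=1$, forces the sum to $0$, so $B$ is mixing. Conversely, if $\sigma$ is not mixing I would pick $a,b$ and $s_n\to\infty$ with $\big|\langle U_{s_n}b,\bar a\rangle-\int a\int b\big|\ge\delta>0$, and take $w_n=u_{s_n}\to0$ in WOT with $g=h=e$ to violate mixing of $B$.

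For weak mixing I would invoke the characterization in~\cite{BR} that $\sigma$ is weakly mixing iff the Koopman representation on $L^2_0(X)=L^2\ominus\CCC 1$ has no nonzero finite-dimensional invariant subspace, equivalently (by separability of $L^2_0$ and a diagonal argument over the ``finite set plus $\eps$'' criterion) iff there is a \emph{single} sequence $g_n\to\infty$ with $U_{g_n}\to0$ in WOT on $L^2_0$. Taking $w_n=u_{g_n}$ and using $U_{gg_n}\to E$ in WOT, where $E$ is the projection onto constants, the identity above shows $\|\E_B(xw_ny)-\E_B(x)w_n\E_B(y)\|_2\to0$ on monomials, hence for all $x,y$; thus $w_n$ witnesses weak mixing of $B$. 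For the converse I would argue contrapositively: if $\sigma$ is not weakly mixing, fix a nonzero finite-dimensional invariant $V\subset L^2_0$ with orthonormal basis $e_1,\dots,e_d$ (taken in $L^\infty$ after truncation, controlling the errors) and a unit $\xi\in V$; invariance gives $\sum_i|\langle U_s\xi,e_i\rangle|^2=1$ for every $s$. Then for any unitaries $w_n=\sum_sc_s^{(n)}u_s\in B$, summing the coefficient formula with $x=\bar e_i$, $y=\xi$ over $i$ yields $\sum_{i=1}^d\|\E_B(\bar e_i\,w_n\,\xi)\|_2^2=\sum_s|c_s^{(n)}|^2=1$, which cannot tend to $0$, so no witnessing sequence exists and $B$ fails to be weakly mixing.

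I expect the main obstacle to be the weak-mixing equivalence rather than the mixing one: matching the purely existential operator-algebra definition (``there exists a sequence of unitaries'') to the ergodic-theoretic one requires the representation-theoretic fact that weak mixing is equivalent to $0$ lying in the WOT-closure of the Koopman operators on $L^2_0$, together with the separability argument producing one universal sequence, and the finite-dimensional invariant subspace in the converse (with bounded approximation of possibly unbounded eigenfunctions). The mixing case, by contrast, is essentially the displayed coefficient computation together with the elementary observation that WOT-null unitary sequences in $B$ have pointwise-null Fourier coefficients.
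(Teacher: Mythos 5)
Your argument is correct and is essentially the computation the paper relies on: the paper offers no proof of its own, instead invoking the arguments of Propositions 2.2 and 3.6 of Jolissaint--Stalder, which proceed by exactly this reduction to Fourier coefficients in the crossed product and to matrix coefficients of the Koopman representation. The details you leave to the reader --- the Kaplansky reduction to monomials, the $L^\infty$-truncation of vectors in the finite-dimensional invariant subspace (controlled uniformly in $n$ by the bound $\|\E_B(xwy)\|_2\le\|x\|\,\|y\|_2$), and the extraction of a single WOT-null sequence of Koopman operators from the Bergelson--Rosenblatt characterization of weak mixing --- are all routine and correctly flagged.
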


For an inclusion of finite von Neumann algebras $B \subseteq M$,  we call a unitary operator $u \in M$ a  \emph{normalizer} of $B$ in $M$ if $uBu^*=B$~\cite{Dix1}.  Clearly, every unitary in $B$ satisfies this condition; the subalgebra $B$ is said to be \emph{singular} in $M$ if the only normalizers of $B$ in $M$ are elements of $B$. There is a close relationship between the concepts of weak mixing and singularity.  Sinclair and Smith~\cite{S-S} noted one connection in proving that weakly mixing von Neumann subalgebras are singular in their containing algebras.  The converse was proved by Sinclair, Smith, White and Wiggins~\cite{SSWW} under the assumption that the subalgebra is also masa (maximal abelian self-adjoint subalgebra) in the ambient von Neumann algebra.  In other words, the measure preserving action of a countable discrete abelian group $\Gamma_0$ on a finite measure space $(X,\mu)$ is weakly mixing if and only if the associated von Neumann algebra $L(\Gamma_0)$ is singular in $L^\infty(X,\mu)\rtimes \Gamma_0$. This provides an operator algebraic characterization of weak mixing in the abelian setting, which is the main motivation for the study undertaken here.  In contrast to the abelian case, Grossman and Wiggins~\cite{G-W} showed that for general finite von Neumann algebras, weakly mixing is not equivalent to singularity, so techniques beyond those known for singular subalgebras are required.  In what follows, we develop basic theory for mixing properties of general subalgebras of finite von Neumann algebras.  This leads to a number of new observations about mixing properties of subalgebras and group actions, a characterization of weakly mixing subalgebras in terms of their finite bimodules, and a variety of new examples of inclusions of von Neumann algebras satisfying mixing conditions.  The paper is organized as follows.

Section \ref{section:unitary} contains some preliminary technical results.  We show that if $B$ is a diffuse finite von Neumann algebra, then $B^\omega\ominus B=\{x\in B^\omega:\, \tau_\omega(x^*b)=0,\,\forall b\in B\}$ is the weak operator closure of the linear span of unitary operators in $B^\omega\minus B$, where $B^\omega$ is the ultra-power algebra of $B$. This  result plays an important role in the subsequent sections.

In Section \ref{section:mixing}, we prove that if $B$ is a mixing von Neumann subalgebra of a finite von Neumann algebra $M$, one has
\[
 \lim_{n\rightarrow\infty}\|\E_B(xb_ny)-\E_B(x)b_n\E_B(y)\|_2=0,\quad\forall x,y\in M,
\]
if $\{b_n\}$ is a bounded sequence of operators in $B$ such that $\lim_{n\rightarrow\infty}b_n=0$ in the weak operator topology. As applications, we show that if $B$ is  mixing in $M$, $k$ is a positive integer, and $e\in B$ is a projection, then $M_k(\cc)\otimes B$ is mixing in $M_k(\cc)\otimes M$  and $eBe$ is  mixing in $eMe$. We also show that, in contrast to weakly mixing masas, one cannot distinguish mixing masas by the presence or absence of centralizing sequences in the masa for the containing II$_1$ factor.

Section \ref{section:group} concerns the special case of inclusions of group von Neumann algebras.  We extend some results of~\cite{J-S} for abelian subgroups to the case of a general inclusion of countable, discrete groups $\Gamma_0 \subset \Gamma$ in showing that $L(\Gamma_0)$ is mixing in $L(\Gamma)$ if and only if $g\Gamma_0 g^{-1}\cap \Gamma_0$ is a finite group for every $g\in \Gamma\setminus \Gamma_0$.   These two conditions are seen to be equivalent the property that for every diffuse von Neumann subalgebra $A$ of $B$ and every $y\in M$, $yAy^*\subset B$ implies $y\in B$. Some applications to ergodic theory are given.  In particular, Theorem 
\ref{T:group algebra and stongly mixing} generalizes results of Kitchens and Schmidt \cite{KS} and Halmos \cite{Ha}.

In Section \ref{section:weak mixing}, we introduce and study the concept of relative weak mixing for a triple of finite von Neumann algebras, and obtain several characterizations of weakly mixing triples.  It turns out that relative weak mixing of one subalgebra with respect to another is closely related to the bimodule structure between the two subalgebras.  In particular, we show  that $B \subset M$ is weakly mixing relative to $A$ ($A$ is not necessarily contained in $B$) if and only the following property holds: if $x\in M$ satisfies $Ax\subset \sum_{i=1}^nx_iB$ for a finite number of elements $x_1,\ldots,x_n$ in $M$, then $x\in B$. 

The results in Section \ref{section:subalgebras} show that mixing von Neumann subalgebras have hereditary properties which are notably different from those of general singular subalgebras.  We also consider the relationship between mixing and normalizers; in particular, we show that subalgebras of mixing algebras inherit a strong singularity property from the containing algebra.  Finally, we provide an assortment of new examples of  mixing von Neumann subalgebras which arise from the amalgamated free product and crossed product constructions.

We collect here some basic facts about finite von Neumann algebras, which will be used in the sequel. Throughout this paper, $M$ is a finite von Neumann algebra with a given faithful normal trace $\tau$. Denote by $L^2(M)=L^2(M,\tau)$  the Hilbert space obtained by the GNS-construction of $M$ with respect to $\tau$.
The image of $x\in M$ via the GNS-construction is denoted by $\hat{x}$, and the image of a subset $L$ of $M$ is denoted by $\hat{L}$. The trace norm of $x\in M$ is defined by $\|x\|_{2}=\|x\|_{2,\tau}=\tau(x^*x)^{1/2}$.
 Suppose that $B$ is a von Neumann subalgebra of $M$. Then there exists a unique faithful normal conditional expectation $\E_B$ from $M$ onto $B$ preserving $\tau$. Let $e_B$ be the projection of $L^2(N)$ onto $L^2(B)$. Then the von Neumann algebra $\langle M,e_B\rangle$ generated by $M$ and $e_B$ is called \emph{the basic construction} of $M$, which plays a crucial role in the study of von Neumann subalgebras of finite von Neumann algebras. There is a unique faithful tracial weight ${\rm Tr}$ on $\langle M,e_B\rangle$ such that
\[
 {\rm Tr}(xe_By)=\tau(xy),\quad\forall x,y\in M.
\]
For $\xi\in L^2(\langle M,e_B\rangle, {\rm Tr})$, define $\|\xi\|_{2,\Tr}=\Tr(\xi^*\xi)^{1/2}$.  For more details of the basic construction, we refer to~\cite{Chr, Jones, P-P, S-S2}. For a detailed account of finite von Neumann algebras and the theory of masas, we refer the reader to~\cite{S-S2}.

\noindent{\bf Acknowledgements}: The authors thank Ken Dykema, David Kerr, Roger Smith,  and Stuart White for valuable discussions throughout the completion of this work.

\section{Unitary operators in $M\ominus B$}  \label{section:unitary}

Let $M$ be a finite von Neumann algebra with a faithful normal
trace $\tau$, and let $B$ be a von Neumann subalgebra of $M$. We denote by
$M \ominus B$ the orthogonal complement of $B$ in $M$ with respect
to the standard inner product on $M$, that is, 
\[M\ominus B=\{x\in M:\tau(x^*b)=0\,\text{for all}\, b\in B\}.\] 
Then $x\in M\ominus B$ if and only if $\E_B(x)=0$, where $\E_B$ is the
trace-preserving conditional expectation of $M$ onto $B$. Note that if $x\in
M\ominus B$, then $\tau(x)=\tau(\E_B(x))=0$, so the unique positive
element in $M\ominus B$ is 0. On the other hand, it is easy to see
that $M\ominus B$ is the linear span of self-adjoint elements in
$M\ominus B.$

In the following section, we will use the fact that a bounded sequence $(b_n)$ in a finite von Neumann algebra $B$ converges to 0 in the weak operator topology if and only if it defines an element of the ultrapower $B^\omega$ which is orthogonal to $B$ in the above sense.  A key step in the proof of Theorem \ref{L:mixing to arbitary sequences} will then be to approximate an arbitrary $z \in B^\omega \ominus B$ by linear combinations of unitary operators in $B^\omega \ominus B$.  That such an approximation is possible is the main technical result of this section.  

 When  $B \subset M$ comes from an inclusion of countable discrete groups, there is an obvious dense linear subspace of $M \ominus B$:  if $G$ is a subgroup of a discrete group
$\Gamma$, then $\L(\Gamma)\ominus \L(G)$ is the weak
closure of the linear span of unitary operators corresponding to elements in
$\Gamma\setminus G$.  Although in the case of a general inclusion $B \subseteq M$, such a canonical set of unitaries is not available, we nevertheless obtain a partial answer to the following question:  If $B$ is a subalgebra of a diffuse finite von Neumann algebra $M$ such that $eMe \neq eBe$ for every nonzero projection $e \in B$, is $M \ominus B$ the weak closure of the linear span of unitaries in $M \ominus B$ ?

The assumption that $eMe\neq eBe$ for every nonzero projection $e\in B$ is
necessary, as is the assumption that $M$ is diffuse.
For instance, if $M=\cc\oplus \cc$ and $B=\cc$ and $\tau(1\oplus
0)\neq \tau(0\oplus 1)$, then there are no unitary operators in
$M\ominus B$.

Let $(M)_1$ be the closed unit ball of $M$, and let $\Lambda=\{x\in
(M)_1: x=x^*, \E_B(x)=0\}$. Then $\Lambda$ is a convex set which is
closed, hence also compact, in the weak operator topology.  By the Krein-Milman
Theorem, $\Lambda$ is the weak operator closure of the convex hull of its extreme points.
 Thus, we need only  characterize the extreme points of
$\Lambda$.

\begin{Lemma}\label{L:extreme points of gamma} Suppose that for every
nonzero projection $p\in M$, there exists a nonzero element $x_p\in
pMp$ satisfying $\E_B(x_p)=0$.
 Then the extreme points of $\Lambda$ are
\[\left\{2e-1:\,e\in M\,\text{and}\,\, \E_B(e)=\frac{1}{2}\right\}.
\]
\end{Lemma}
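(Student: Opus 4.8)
The plan is to prove the two inclusions separately, viewing $\Lambda$ as the slice of the self-adjoint part of the unit ball $(M)_1$ cut out by the affine condition $\E_B(x)=0$. For the easy inclusion, I read $e$ as ranging over the projections of $M$, so that $2e-1$ is a self-adjoint unitary; then $\E_B(2e-1)=2\E_B(e)-1=0$ and $\|2e-1\|=1$, hence $2e-1\in\Lambda$. Self-adjoint unitaries are extreme points of the self-adjoint part of $(M)_1$; this is standard and can be seen directly, since if $2e-1=\tfrac12(a+b)$ with $a,b$ self-adjoint contractions, then testing against unit vectors in the ranges of $e$ and $1-e$ forces $a=b=2e-1$. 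Because an extreme point of a convex set that happens to lie in a convex subset is automatically extreme in the subset, each such $2e-1$ is an extreme point of $\Lambda$.

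For the reverse inclusion, let $x$ be an extreme point of $\Lambda$; I want to show $x$ is a symmetry, i.e. $x^2=1$, whence $e:=\tfrac12(x+1)$ is a projection with $\E_B(e)=\tfrac12$ and $x=2e-1$. Suppose instead that $1-x^2\neq 0$. Since $x=x^*$ with $\|x\|\le 1$, nonvanishing of the positive operator $1-x^2$ means the spectral projection $p:=\chi_{[-1+\delta,\,1-\delta]}(x)$ is nonzero for all sufficiently small $\delta>0$; fix such a $\delta$. As $p$ is a spectral projection of $x$ it commutes with $x$, and $\|xp\|=\|pxp\|\le 1-\delta$. This is where the hypothesis enters: applied to the nonzero projection $p$ it furnishes a nonzero $x_p\in pMp$ with $\E_B(x_p)=0$, and replacing $x_p$ by its real or imaginary part (both of which lie in $pMp\cap(M\ominus B)$, and at least one of which is nonzero) I may assume $x_p=x_p^*\neq 0$; after scaling I may also assume $\|x_p\|\le\delta$.

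It then remains to check that $x\pm x_p\in\Lambda$ and that these two elements are distinct, contradicting extremality. Distinctness and self-adjointness are immediate from $x_p=x_p^*\neq 0$, and $\E_B(x\pm x_p)=\E_B(x)\pm\E_B(x_p)=0$. The crucial estimate is the norm bound: since $x_p=px_pp$ and $x$ commutes with $p$, the operator $x\pm x_p$ is block-diagonal for the decomposition $1=p+(1-p)$, so its norm is the maximum of the norms of its two corners. The corner $(1-p)x(1-p)$ has norm at most $\|x\|\le 1$, while the corner $pxp\pm x_p$ has norm at most $\|pxp\|+\|x_p\|\le(1-\delta)+\delta=1$. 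Hence $\|x\pm x_p\|\le 1$, so $x\pm x_p\in\Lambda$, and $x=\tfrac12\big((x+x_p)+(x-x_p)\big)$ exhibits $x$ as a nontrivial convex combination in $\Lambda$, the desired contradiction. I expect the main obstacle to be precisely the construction of this norm-one perturbation inside $\Lambda$: one needs a nonzero element that is simultaneously trace-orthogonal to $B$ (supplied by the hypothesis) and supported on a corner where $x$ has norm strictly below $1$ (supplied by the spectral calculus), and the block-diagonal structure is exactly what glues these two requirements together.
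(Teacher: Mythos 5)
Your proof is correct and follows essentially the same route as the paper's: for the hard inclusion both arguments take a non-symmetry extreme point, pass to a nonzero spectral projection $p$ on which $\|pxp\|\le 1-\delta$, use the hypothesis to produce a nonzero self-adjoint $x_p\in pMp$ with $\E_B(x_p)=0$ scaled to norm at most $\delta$, and derive a contradiction from $x=\tfrac12\bigl((x+x_p)+(x-x_p)\bigr)$; for the easy inclusion both invoke extremality of symmetries in the unit ball. Your explicit block-diagonal norm estimate and the reduction to a self-adjoint $x_p$ via real/imaginary parts just spell out steps the paper leaves implicit.
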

\begin{proof} Suppose $a\in \Lambda$ is an extreme point of
$\Lambda$  and $a\neq 2e-1$. By the spectral decomposition theorem,
there exists an $\epsilon>0$ and a nonzero spectral projection $e$ of
$a$ such that
\[(-1+\epsilon)e\leq ae \leq (1-\epsilon)e.
\] By assumption, there is a nonzero self-adjoint
element $x\in eMe$ such that
 $\E_B(x)=0$. By multiplying a
scalar, we may assume that $-\epsilon e\leq x\leq \epsilon e$.
Then $x+a,x-a\in \Lambda$ and
$x=\frac{1}{2}(x+a)+\frac{1}{2}(x-a)$. So $x$ is not an extreme
point of $\Lambda$, contradicting our assumption. Therefore,
$x=2e-1$ for some projection $e\in M$. Since $\E_B(x)=0$,
$\E_B(e)=\frac{1}{2}$. Note that $2e-1$ is an extreme point of
$(M)_1$.  This completes the proof.
\end{proof}

The following example shows that the assumptions of the above Lemma are
essential.

\noindent{\bf Example:}\, In the inclusion  $\cc
\subseteq M_3(\cc)$, there is no projection $e\in M_3(\cc)$
satisfing $\tau(e)=\frac{1}{2}$. In this case, the
partial isometry
\[\left(\begin{array}{ccc}
1&0&0\\
0&0&0\\
0&0&-1
\end{array}
 \right)
\] is an extreme point of $\Lambda$.

\begin{Corollary}\label{L:M minus C} Let $M$ be a diffuse finite von Neumann
algebra with a faithful normal trace $\tau$. Then $M\ominus \cc 1$
is the weak operator closure of the linear span of self-adjoint
unitary operators in $M\ominus \cc 1$.
\end{Corollary}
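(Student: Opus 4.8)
The plan is to apply Lemma~\ref{L:extreme points of gamma} in the special case $B=\cc 1$, for which $\E_B$ is the trace $\tau$ and $M\ominus\cc 1=\{x\in M:\tau(x)=0\}$. The first task is to verify the hypothesis of that Lemma, namely that for every nonzero projection $p\in M$ there is a nonzero $x_p\in pMp$ with $\tau(x_p)=0$. Here I would use diffuseness: since $M$ has no minimal projections, neither does the corner $pMp$ (a minimal projection of $pMp$ would be a minimal projection of $M$ lying under $p$), so in particular $pMp\neq\cc p$ and there exists a projection $q$ with $0<q<p$. Then $x_p=q-\frac{\tau(q)}{\tau(p)}\,p$ lies in $pMp$, is nonzero, and satisfies $\tau(x_p)=0$. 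Thus Lemma~\ref{L:extreme points of gamma} applies, and the extreme points of $\Lambda=\{x\in (M)_1: x=x^*,\ \tau(x)=0\}$ are exactly $\{2e-1:\,e\in M,\ \tau(e)=\tfrac12\}$.

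Next I would observe that these extreme points are precisely the self-adjoint unitaries lying in $M\ominus\cc 1$: an operator $u$ with $u=u^*$ and $u^2=1$ is of the form $u=2e-1$ with $e=(1+u)/2$ a projection, and $\tau(u)=0$ if and only if $\tau(e)=\tfrac12$. Writing $V$ for the linear span of the self-adjoint unitaries in $M\ominus\cc 1$ and $\overline V$ for its weak operator closure, I would then invoke the Krein--Milman theorem, already available since $\Lambda$ is convex and weak-operator compact: $\Lambda$ equals the weak operator closure of the convex hull of its extreme points. As every convex combination of self-adjoint unitaries is in particular a linear combination, the convex hull of the extreme points lies in $V$, and hence $\Lambda\subseteq\overline V$.

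To conclude I would upgrade this containment to the full space. Every nonzero self-adjoint $x\in M\ominus\cc 1$ satisfies $x/\|x\|\in\Lambda$, so $x$ is a scalar multiple of an element of $\Lambda\subseteq\overline V$, and therefore $x\in\overline V$ because $\overline V$ is a linear subspace. Since, as noted in the preamble to this section, $M\ominus\cc 1$ is the linear span of its self-adjoint elements, this gives $M\ominus\cc 1\subseteq\overline V$. For the reverse inclusion, in the standard representation on $L^2(M)$ one has $\tau(\,\cdot\,)=\ip{\,\cdot\,\hat 1}{\hat 1}$, so $\tau$ is weak-operator continuous and $M\ominus\cc 1=\ker\tau$ is weak-operator closed; as $V\subseteq M\ominus\cc 1$ we get $\overline V\subseteq M\ominus\cc 1$. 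Combining the two inclusions yields $\overline V=M\ominus\cc 1$. The only genuinely substantive step is the verification of the hypothesis of Lemma~\ref{L:extreme points of gamma} from diffuseness of the corners $pMp$; the passage from the Krein--Milman description of $\Lambda$ to the linear-span statement, and the weak-operator closedness of $M\ominus\cc 1$, are routine.
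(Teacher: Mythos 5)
Your proof is correct and follows essentially the same route as the paper: verify the hypothesis of Lemma~\ref{L:extreme points of gamma} for $B=\cc 1$ using diffuseness of the corners $pMp$, identify the extreme points of $\Lambda$ with the self-adjoint unitaries of trace zero, and conclude via the Krein--Milman setup from the section's preamble. You merely spell out details the paper leaves implicit (the explicit traceless element $q-\frac{\tau(q)}{\tau(p)}p$, the passage from $\Lambda$ to the full linear span, and the reverse inclusion).
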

\begin{proof} For every nonzero projection $p\in M$, $pMp$ is diffuse and hence
$pMp\neq \cc p$. So there is an operator $x_p\in pMp$ with
$\tau(x_p)=0$. By Lemma~\ref{L:extreme points of gamma}, $M\ominus
\cc 1$ is the weak operator closure of linear span of self-adjoint
unitary operators in $M\ominus \cc 1$.
\end{proof}

\begin{Lemma}\label{L: haar unitary operators} Suppose $B$ is a
diffuse finite von Neumann algebra with a faithful normal trace
$\tau$. For $\epsilon>0$ and $x_1,\ldots,x_n\in B$, there exists a
Haar unitary operator $u\in B$ such that
\[|\tau(x_iu^*)|<\epsilon,\quad  1\leq i\leq n.
\]
\end{Lemma}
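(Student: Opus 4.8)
The plan is to reduce everything to a single fixed Haar unitary together with Bessel's inequality, so that the freedom to choose $u$ comes from passing to high powers. First I would use diffuseness to produce one Haar unitary $v \in B$. Since $B$ is diffuse and finite, any maximal abelian subalgebra $A \subseteq B$ is itself diffuse: if $A$ had a minimal projection $p$, then $pAp = \cc p$ would be a masa in $pBp$, forcing $p$ to be a minimal projection of $B$, contradicting diffuseness. A diffuse abelian von Neumann algebra with a faithful normal trace is trace-preservingly $*$-isomorphic to $L^\infty(\mathbb{T})$ with normalized Lebesgue measure (see~\cite{S-S2}), and the image of the coordinate function $z \mapsto z$ is a Haar unitary $v$, i.e. $\tau(v^m) = 0$ for every nonzero integer $m$.

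The key observation is then that every nonzero power of $v$ is again a Haar unitary: for $k \neq 0$ and $m \neq 0$ one has $\tau((v^k)^m) = \tau(v^{km}) = 0$ since $km \neq 0$. Moreover $\tau((v^k)^* v^j) = \tau(v^{j-k}) = \delta_{jk}$, so $\{\widehat{v^k}\}_{k \in \ZZZ}$ is an orthonormal family in $L^2(B, \tau)$. This is precisely the structure that lets me make $u$ almost orthogonal to the prescribed $x_i$ while keeping it Haar.

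With these facts in hand the conclusion follows from Bessel's inequality. For each fixed $i$ we have $\sum_{k \in \ZZZ} |\tau(x_i (v^k)^*)|^2 = \sum_{k} |\ip{\hat{x_i}}{\widehat{v^k}}|^2 \le \|x_i\|_2^2 < \infty$, using traciality to rewrite $\ip{\hat{x_i}}{\widehat{v^k}} = \tau(v^{-k} x_i) = \tau(x_i (v^k)^*)$. Hence $\tau(x_i (v^k)^*) \to 0$ as $|k| \to \infty$. Since there are only finitely many indices $i$, I can choose a single integer $k$ with $|k|$ large enough that $|\tau(x_i (v^k)^*)| < \epsilon$ holds for all $1 \le i \le n$ simultaneously, and set $u = v^k$, which is a Haar unitary by the previous paragraph.

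The one substantive input is the existence of the initial Haar unitary $v$, equivalently the fact that a diffuse finite von Neumann algebra contains a diffuse abelian subalgebra; this is where diffuseness is genuinely used, and it is standard. Everything after that is bookkeeping, and the two points I would be careful to state correctly are that passing to powers preserves the Haar property (so the almost-orthogonal unitary we extract is still Haar) and that finiteness of the index set allows a single exponent $k$ to work for all of $x_1,\ldots,x_n$ at once.
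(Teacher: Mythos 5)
Your proof is correct and follows essentially the same route as the paper: produce a Haar unitary $v\in B$ from diffuseness and take $u=v^N$ for $N$ large, the only difference being that you justify $\tau(x_i(v^k)^*)\to 0$ explicitly via Bessel's inequality where the paper simply invokes $v^n\to 0$ in the weak operator topology (which is the same fact).
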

\begin{proof} Since $B$ is diffuse, $B$ contains a Haar unitary
operator $v$. Note that $v^n\rightarrow 0$ in the weak operator
topology. So there exists an $N$ such that
\[|\tau(x_i(v^N)^*)|<\epsilon,\quad  1\leq i\leq n.
\]
Let $u=v^N$. Then $u$ is a Haar unitary operator and the lemma
follows.
\end{proof}

Let $B$
be a separable diffuse  von Neumann algebra with a faithful normal
trace $\tau$, and let $B^\omega$ be the ultrapower algebra of $B$
with a faithful normal trace $\tau_\omega$ (see~\cite{Sa}). Let $(B^\omega\ominus B)_1$ be the closed unit ball of $B^\omega\ominus B$.
 The following proposition is the main result of this section.
\begin{Proposition} \label{P:ultrapower algebra minus B}Suppose $B$ is a separable diffuse finite von Neumann algebra with a faithful normal trace $\tau$. Then  $(B^\omega\ominus
B)_1$ is  the trace norm closure of the convex hull of
self-adjoint unitary operators in $B^\omega\ominus B$.
\end{Proposition}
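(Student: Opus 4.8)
The plan is to run the Krein--Milman machinery already set up for $\Lambda$, now with the containing algebra taken to be the finite von Neumann algebra $B^\omega$ and with $\E_B$ the trace-preserving conditional expectation of $B^\omega$ onto the constant-sequence copy of $B$. The first thing to record is that for $x=(x_n)_\omega\in B^\omega$ one has $\E_B(x)=0$ exactly when $\hat{x}_n\to 0$ weakly in $L^2(B)$ along $\omega$, since $\tau_\omega(xb^*)=\lim_\omega\tau(x_nb^*)$ for every $b\in B$. Consequently the self-adjoint unitaries lying in $B^\omega\ominus B$ are precisely the elements $2e-1$ with $e\in B^\omega$ a projection and $\E_B(e)=\tfrac12$, which are exactly the candidate extreme points produced by Lemma~\ref{L:extreme points of gamma}.

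To invoke that lemma with the ambient algebra $B^\omega$ I must verify its hypothesis: every nonzero projection $p\in B^\omega$ has a corner $pB^\omega p$ containing a nonzero element killed by $\E_B$. This is the main step. I would represent $p=(p_n)_\omega$ by projections $p_n\in B$ with $\tau(p_n)\to\tau_\omega(p)=:t>0$ along $\omega$; for $\omega$-almost all $n$ the corner $p_nBp_n$ is again diffuse, so Lemma~\ref{L: haar unitary operators} applies inside it. Fixing a $\|\cdot\|_2$-dense sequence $\{b_j\}$ in $B$ (this is where separability of $B$ enters), I would choose a Haar unitary $u_n\in p_nBp_n$ with $|\tau\bigl(u_n(p_nb_jp_n)^*\bigr)|<1/n$ for all $j\le n$. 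Since $u_n=p_nu_np_n$ gives $\tau(u_nb_j^*)=\tau\bigl(u_n(p_nb_jp_n)^*\bigr)$, a diagonal-plus-density argument shows $\hat{u}_n\to 0$ weakly in $L^2(B)$ along $\omega$, so $x:=(u_n)_\omega$ satisfies $\E_B(x)=0$; moreover $x=pxp$ and $\|x\|_2^2=\lim_\omega\tau(p_n)=t>0$, so $x$ is a nonzero element of $pB^\omega p\cap\ker\E_B$. This establishes the hypothesis.

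With that in hand, Lemma~\ref{L:extreme points of gamma} identifies the extreme points of $\Lambda=\{x\in(B^\omega)_1:x=x^*,\ \E_B(x)=0\}$ as exactly the self-adjoint unitaries in $B^\omega\ominus B$. As $\E_B$ is normal and the unit ball of $B^\omega$ is weak-operator compact, $\Lambda$ is a weak-operator compact convex set, so by Krein--Milman it equals the weak-operator closed convex hull of those self-adjoint unitaries. I would then upgrade the topology using the standard fact that for a bounded convex subset of a finite von Neumann algebra the weak-operator closure coincides with the $\|\cdot\|_2$-closure (the $\sigma$-weak and $\sigma$-strong topologies have the same continuous functionals, and on bounded sets the latter is the trace norm). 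Hence $\Lambda$ is the trace-norm closed convex hull of the self-adjoint unitaries in $B^\omega\ominus B$. Since any convex combination of self-adjoint operators is self-adjoint, this closed convex hull is exactly the self-adjoint part of $(B^\omega\ominus B)_1$, which is the content of the assertion; a general element of $(B^\omega\ominus B)_1$ is reached through its real and imaginary parts, each a member of $\Lambda$.

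The hardest point is the hypothesis check for Lemma~\ref{L:extreme points of gamma}: one needs the full orthogonality $\E_B(x)=0$ rather than mere vanishing of the trace, and it is precisely the ultrapower construction above---choosing corner Haar unitaries that become simultaneously weakly null against a dense subset of $B$---that delivers this. Everything after that is an application of the results already established together with the convex-closure fact.
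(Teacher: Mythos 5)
Your proposal is correct and follows essentially the same route as the paper's own proof: you verify the hypothesis of Lemma~\ref{L:extreme points of gamma} for the inclusion $B\subset B^\omega$ by choosing, via Lemma~\ref{L: haar unitary operators} and separability, Haar unitaries in the corners $p_nBp_n$ that are asymptotically orthogonal to a trace-norm dense sequence in $B$, and then conclude by Krein--Milman together with the coincidence of the weak-operator and trace-norm closures of bounded convex sets. Your closing remarks on the self-adjoint part versus general elements of $(B^\omega\ominus B)_1$ are a slightly more careful rendering of a point the paper passes over, but the argument is the same.
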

\begin{proof} We claim that for every nonzero projection $p\in
B^\omega$,  there exists a nonzero element $x_p$ in $pB^\omega p$ such
that $\E_{B}(x_p)=0$, where $\E_{B}$ is the conditional
expectation of $B^\omega$ onto $B$ preserving $\tau_\omega$. Let
$p=(p_n)$, where $p_n\in B$ is a projection with
$\tau(p_n)=\tau_\omega(p)>0$. Since $B$ is separable, there is a
dense sequence $\{y_n\}$ of $B$ in the trace norm. We may assume
that $y_1=1$. By Lemma~\ref{L: haar unitary operators}, for any finite subset
$\sett{y_1,\ldots,y_n}$ of the dense sequence, there is a Haar unitary operator $u_n\in
p_nBp_n$ such that
\[|\tau(p_ny_ip_nu_n^*)|<\frac{1}{n},\quad \forall 1\leq i\leq n.
\]
Let $x_p=(u_n)$. Then $\|x_p\|_2^2=\lim_{n\rightarrow\omega}
\|u_n\|_2^2=\tau(p)>0$. Hence, $x_p\neq 0$ and $x_p\in pB^\omega
p$. Note that for each $y_k$,
\[\tau_\omega(y_k(x_p)^*)=\tau_\omega((py_kp)(x_p)^*)=\lim_{n\rightarrow\omega}\tau(p_ny_kp_nu_n^*)=0.
\]
Since $\{y_k\}$ is dense in $B$ in the trace norm topology,
$\tau_\omega(y(x_p)^*)=0$ for all $y\in B$. This implies
$\E_{B}(x_p)=0$. By Lemma~\ref{L:extreme points of gamma}, $(B^\omega\ominus B)_1$ is the weak operator closure of  the convex hull of
self-adjoint unitary operators in $B^\omega\ominus B$. Note that $(B^\omega\ominus B)_1$ is a convex set, so its weak operator closure coincides with its closure in the strong operator and trace norm topologies.  This proves the result.
\end{proof}

\begin{Corollary} \label{T:orthocomplement}
 Suppose $B$ is a separable diffuse finite von Neumann algebra with a faithful normal trace $\tau$. Then  $B^\omega\ominus B$ is  the weak operator closure of the linear span of
self-adjoint unitary operators in $B^\omega\ominus B$.
\end{Corollary}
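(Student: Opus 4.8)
The plan is to read off Corollary~\ref{T:orthocomplement} from Proposition~\ref{P:ultrapower algebra minus B}, the only work being a passage between topologies. Write $S$ for the linear span of the self-adjoint unitary operators lying in $B^\omega\ominus B$; the goal is to show that the weak operator closure $\overline{S}$ equals $B^\omega\ominus B$. One inclusion is soft. For each $b\in B$ the map $\phi_b\colon x\mapsto\tau_\omega(b^*x)=\langle \hat x,\hat b\rangle$ is a vector functional on $B^\omega$ and is therefore continuous in the weak operator topology, and $x\in B^\omega\ominus B$ exactly when $\phi_b(x)=0$ for all $b\in B$ (since $\tau_\omega(x^*b)=\overline{\phi_b(x)}$ and $B$ is self-adjoint). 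Hence $B^\omega\ominus B=\bigcap_{b\in B}\ker\phi_b$ is weak operator closed. As every self-adjoint unitary in $B^\omega\ominus B$, and thus all of $S$, lies in $B^\omega\ominus B$, we get $\overline{S}\subseteq B^\omega\ominus B$.

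For the reverse inclusion, fix a nonzero $z\in B^\omega\ominus B$ (the case $z=0$ being trivial) and set $w:=z/\|z\|$, so that $w$ lies in the closed unit ball $(B^\omega\ominus B)_1$. By Proposition~\ref{P:ultrapower algebra minus B}, $w$ is a $\|\cdot\|_2$-limit of a sequence $(k_m)$ drawn from the convex hull of the self-adjoint unitaries in $B^\omega\ominus B$; in particular each $k_m\in S$ and $\|k_m\|\le 1$. Now on norm-bounded subsets of $B^\omega$ the trace norm topology dominates the weak operator topology: for any $a'$ in the commutant one has $\|(k_m-w)\,a'\hat 1\|\le\|a'\|\,\|k_m-w\|_2\to 0$, and vectors $a'\hat 1$ are dense, so $k_m\to w$ in the weak (indeed strong) operator topology. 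Thus $w\in\overline{S}$, and since $\overline{S}$ is a linear subspace (hence invariant under scalar multiplication) we conclude $z=\|z\|\,w\in\overline{S}$. This proves $B^\omega\ominus B\subseteq\overline{S}$, and together with the previous paragraph yields equality.

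The one point requiring care — and the step I would flag as the crux — is the transition from the bounded, trace-norm statement of Proposition~\ref{P:ultrapower algebra minus B} to the unbounded, weak-operator statement asserted here. It is resolved by the two observations used above: that $\|\cdot\|_2$-convergence implies weak operator convergence on bounded sets (which carries the unit-ball conclusion into the weak operator topology) and that the weak operator closure of a subspace is scale-invariant (which propagates the unit-ball conclusion to all of $B^\omega\ominus B$). No ingredient beyond Proposition~\ref{P:ultrapower algebra minus B} is needed, so the corollary is essentially a repackaging of that proposition.
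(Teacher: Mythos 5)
Your proof is correct and follows exactly the route the paper intends: the paper states the corollary without proof as an immediate consequence of Proposition~\ref{P:ultrapower algebra minus B}, relying on the same standard facts you spell out (that $\|\cdot\|_2$-convergence of a norm-bounded sequence implies weak operator convergence, a point the paper itself notes at the end of the proposition's proof, and that the weak operator closure of a linear span absorbs scalars). Your write-up simply makes the implicit details explicit.
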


Using a similar approach, we can also prove the following result.

\begin{Proposition} \label{P:span}
If $M$ is a separable type ${\rm II}_1$ factor and $B$ is an abelian von Neumann subalgebra of $M$, then $M\ominus B$ is the weak operator closure of the linear span of unitary operators in $M\ominus B$. 
\end{Proposition}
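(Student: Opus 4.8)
The plan is to run the same Krein--Milman argument used for Proposition~\ref{P:ultrapower algebra minus B}, now with the containing algebra $M$ itself in place of the ultrapower. First I would set $\Lambda=\{x\in(M)_1:\,x=x^*,\,\E_B(x)=0\}$, which is convex and compact in the weak operator topology, so by the Krein--Milman theorem $\Lambda$ is the weak operator closure of the convex hull of its extreme points. The whole proof then reduces to verifying the hypothesis of Lemma~\ref{L:extreme points of gamma}: for every nonzero projection $p\in M$ there is a nonzero self-adjoint element $x_p\in pMp$ with $\E_B(x_p)=0$. Once this is known, Lemma~\ref{L:extreme points of gamma} identifies the extreme points of $\Lambda$ as the self-adjoint unitaries $2e-1$ with $e\in M$, $\E_B(e)=\tfrac12$, all of which lie in $M\ominus B$.

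The key step, and the place where the hypotheses that $M$ is a ${\rm II}_1$ factor and $B$ is abelian must enter, is this corner statement. I would reduce it to producing, for each nonzero projection $p$, a subprojection $e\le p$ in $M$ with $\E_B(e)=\tfrac12\E_B(p)$: granting such an $e$, the self-adjoint element $x_p=2e-p=e-(p-e)\in pMp$ satisfies $\E_B(x_p)=\E_B(e)-\E_B(p-e)=0$, and it is nonzero because $\tau(e)=\tau(\E_B(e))=\tfrac12\tau(p)>0$. To find the ``halving'' projection $e$ I would first enlarge $B$ to a masa $A\supseteq B$ of $M$; since $\E_B=\E_B\circ\E_A$, it suffices to solve $\E_A(e)=\tfrac12\E_A(p)$, and for $A$ a masa the fibrewise densities of $\E_A$ lie in $A$ itself, which organizes the problem. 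The existence of $e$ is then a Lyapunov-type statement: the range $\{\E_A(f):f\le p\ \text{a projection}\}$ of the normal map $f\mapsto\E_A(f)$ on the projections of the diffuse algebra $pMp$ should be convex, and it contains both $0=\E_A(0)$ and $\E_A(p)$, hence should contain the midpoint $\tfrac12\E_A(p)$.

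I expect this convexity to be the main obstacle, since the target space $L^2(A)$ is infinite dimensional and the classical Lyapunov theorem can fail for infinite-dimensional vector measures; the argument must genuinely exploit that $pMp$ is a diffuse factor rather than an arbitrary diffuse algebra. Two special cases are clean and serve both as a sanity check and as building blocks: if $B$ is finite dimensional one obtains $x_p$ for free, since any Haar unitary $u\in pMp$ gives $\dim_{\cc}B+1$ linearly independent powers $p,u,\dots,u^{\dim B}$ whose images $\E_B(u^{k})$ are necessarily linearly dependent in the finite-dimensional space $L^2(B)$, so a nontrivial relation $\sum_k c_k\E_B(u^k)=0$ yields $x_p=\sum_k c_k u^k\neq 0$ with $\E_B(x_p)=0$ (symmetrizing if necessary); and if $p\in A$ then $\E_A|_{pMp}=\E_{Ap}$ is the conditional expectation onto the masa $Ap\subsetneq pMp$, whose orthocomplement is nonzero.

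Finally, with the hypothesis of Lemma~\ref{L:extreme points of gamma} in hand, Krein--Milman gives that $\Lambda$ is contained in the weak operator closure of the convex hull of $\{2e-1:\E_B(e)=\tfrac12\}$. Since $M\ominus B$ is the linear span of its self-adjoint elements and every self-adjoint element of $M\ominus B$ is a scalar multiple of a member of $\Lambda$, it follows that $M\ominus B$ is contained in, hence equal to, the weak operator closure of the linear span of the self-adjoint unitaries $2e-1\in M\ominus B$. As these are in particular unitaries in $M\ominus B$, and the linear span of all unitaries in $M\ominus B$ is trapped between this set and $M\ominus B$, the weak operator closure of the linear span of the unitary operators in $M\ominus B$ equals $M\ominus B$, as claimed.
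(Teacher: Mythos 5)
Your overall architecture is the right one, and it is surely what the authors intend by ``a similar approach'': reduce everything to Lemma~\ref{L:extreme points of gamma} via the Krein--Milman theorem, and your closing paragraph correctly passes from the compact convex set $\Lambda$ to all of $M\ominus B$. Since the paper offers no further details for Proposition~\ref{P:span}, everything hinges on the one step you isolate: for every nonzero projection $p\in M$ there must be a nonzero self-adjoint $x_p\in pMp$ with $\E_B(x_p)=0$. This is exactly where your argument is incomplete, and you say so yourself. The ``halving'' statement you propose --- a projection $e\le p$ with $\E_A(e)=\tfrac12\E_A(p)$ for a masa $A\supseteq B$ --- is a relative Schur--Horn/carpenter-type problem for masas in ${\rm II}_1$ factors; the convexity of $\{\E_A(f):f\le p \text{ a projection}\}$ that you invoke is precisely the hard content of that problem, it does not follow from Lyapunov-type arguments (as you note, these fail for infinite-dimensional targets), and nothing in this paper supplies it. Even the non-relative version (realizing a prescribed $0\le b\le 1$ in $A$ as $\E_A(q)$ for some projection $q\in M$) is a substantial theorem not available here. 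So as written the proof has a genuine gap at its central step.

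Two further remarks. First, the halving projection is far stronger than what Lemma~\ref{L:extreme points of gamma} requires: you only need one nonzero element of $pMp$ annihilated by $\E_B$, equivalently that $\E_B$ (or $\E_A$ for an ambient masa $A$) fails to be injective on the corner $pMp$; aiming at this weaker statement directly is more promising than routing through the carpenter problem. Second, your two special cases are correct but do not combine to give the general case: the linear-dependence count among $\E_B(p),\E_B(u),\dots$ for a Haar unitary $u\in pMp$ is unavailable once $B$ is infinite dimensional, and for $p\notin A$ the compression $\E_A|_{pMp}$ is not the trace-preserving expectation onto a masa of $pMp$, so the argument for $p\in A$ does not transfer. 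Until the corner statement is established for an arbitrary nonzero projection $p\in M$ --- using in an essential way that $pMp$ is a nonabelian (indeed type ${\rm II}_1$) factor while $B$ is abelian --- the proposition is not proved.
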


It is not clear whether Proposition \ref{P:span} holds for nonabelian subalgebras.  We are unable, for instance, to establish the conclusion of the result when $B$ is a hyperfinite subfactor of a nonhyperfinite type ${\rm II}_1$ factor $M$, e.g. $L\mathbb{F}_2$.

\section{Mixing von Neumann subalgebras}  \label{section:mixing}

Let $M$ be a finite von Neumann algebra with a faithful normal
trace $\tau$, and let $B$ be a von Neumann subalgebra of $M$.

\begin{Definition}\label{D:mixing}\emph{An algebra $B$ is a
\emph{mixing von Neumann subalgebra} of $M$ if
\[\lim_{n\rightarrow \infty}\|\E_B(xu_ny)-\E_B(x)u_n\E_B(y)\|_2=0
\] holds for all $x,y\in M$ and every sequence of unitary operators $\{u_n\}$  in $B$ such that
$\displaystyle\lim_{n\rightarrow \infty}u_n=0$ in the weak
operator topology. If $B$ is a mixing von Neumann subalgebra of $M$, then we say  $B\subseteq M$ a \emph{mixing inclusion} of finite von Neumann algebras.}
\end{Definition}

It is easy to see that $B$ is a mixing von Neumann subalgebra
of $M$ if and only if for all elements $x,y$ in $M$ with
$\E_B(x)=\E_B(y)=0$, one has
\[\lim_{n\rightarrow \infty}\|\E_B(xu_ny)\|_2=0
\] if $\{u_n\}$ is a sequence of unitary operators in $B$ such that
$\displaystyle\lim_{n\rightarrow \infty}u_n=0$ in the weak
operator topology.

\begin{Remark}\label{R:mixing}\emph{ By the Kaplansky density theorem,  we may assume that $x$ and $y$ are in a
subset $F$ of $M$ such that $M$ is the von Neumann algebra
generated by $F$ in
Definition~\ref{D:mixing}.}
\end{Remark}

The following theorem provides a useful equivalent condition for mixing inclusions of finite von Neumann algebras.
\begin{Theorem}\label{L:mixing to arbitary sequences}
 If $B$ is a mixing von Neumann subalgebra
of $M$ and $x,y\in M$ with $\E_B(x)=\E_B(y)=0$, then
\[\lim_{n\rightarrow \infty}\|\E_B(xb_ny)\|_2=0
\] if $\{b_n\}$ is a bounded sequence of operators in $B$ such that
$\displaystyle\lim_{n\rightarrow \infty}b_n=0$ in the weak
operator topology.
\end{Theorem}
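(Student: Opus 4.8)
The plan is to pass to the ultrapower $B^\omega$ and to reduce the assertion, via the approximation result of Section~\ref{section:unitary}, to the defining case of a single weakly null sequence of \emph{unitaries}. Fix a free ultrafilter $\omega$ on $\Nat$. Since a bounded sequence of nonnegative reals tends to $0$ if and only if it tends to $0$ along every free ultrafilter, it suffices to prove $\lim_{n\to\omega}\norm{\E_B(xb_ny)}_2=0$ for each such $\omega$. After the routine separability reduction (replace $M$ by a separable $\E_B$-invariant subalgebra $N$ containing $x$, $y$ and all $b_n$, and replace $B$ by $B_0=N\cap B$, which is again mixing in $N$ and satisfies $\E_B|_N=\E_{B_0}$), I may assume $B$ is separable; the content of the statement is the diffuse case, where the results of Section~\ref{section:unitary} apply, so I assume $B$ diffuse as well.

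Form $b=(b_n)\in B^\omega$. Because $b_n\to0$ in the weak operator topology, $\tau_\omega(a^*b)=\lim_{n\to\omega}\tau(a^*b_n)=0$ for every $a\in B$, so $b\in B^\omega\ominus B$. Viewing $x,y$ as constant sequences in $M^\omega$ and letting $\E_{B^\omega}\colon M^\omega\to B^\omega$ denote the ultrapower conditional expectation, one has $\E_{B^\omega}(xby)=(\E_B(xb_ny))_n$, whence $\lim_{n\to\omega}\norm{\E_B(xb_ny)}_2^2=\norm{\E_{B^\omega}(xby)}_{2,\omega}^2$. Thus the goal becomes $\E_{B^\omega}(xby)=0$. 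The map $c\mapsto\E_{B^\omega}(xcy)$ on $B^\omega$ satisfies $\norm{\E_{B^\omega}(xcy)}_{2,\omega}\le\norm{x}\,\norm{y}\,\norm{c}_{2,\omega}$, so it is continuous in the trace norm. By Proposition~\ref{P:ultrapower algebra minus B}, $b$ lies in the trace-norm closure of the linear span of self-adjoint unitaries in $B^\omega\ominus B$, so by this continuity it is enough to show $\E_{B^\omega}(xvy)=0$ for each self-adjoint unitary $v=(v_n)\in B^\omega\ominus B$, where the $v_n$ may be taken to be self-adjoint unitaries in $B$.

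For such a $v$, the goal $\E_{B^\omega}(xvy)=0$ is exactly $\lim_{n\to\omega}\norm{\E_B(xv_ny)}_2=0$. Suppose this fails: then $S=\{n:\norm{\E_B(xv_ny)}_2\ge\eps\}\in\omega$ for some $\eps>0$. The hypothesis $v\in B^\omega\ominus B$ gives $\lim_{n\to\omega}\tau(a^*v_n)=0$ for every $a$ in a fixed countable trace-norm dense subset $\{a_k\}$ of $B$ (this is where separability enters). A diagonal extraction then produces an increasing sequence $n_1<n_2<\cdots$ in $S$ with $\tau(a_k^*v_{n_j})\to0$ as $j\to\infty$ for every $k$; since the $v_{n_j}$ are bounded and $\{a_k\}$ is dense, and since $\tau(wv_{n_j})=\tau(\E_B(w)v_{n_j})$ for all $w\in M$, the genuine sequence $(v_{n_j})_j$ of unitaries in $B$ converges to $0$ in the weak operator topology of $M$. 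Applying the mixing hypothesis to this sequence (and using $\E_B(x)=\E_B(y)=0$) yields $\norm{\E_B(xv_{n_j}y)}_2\to0$, contradicting $n_j\in S$. Hence $\E_{B^\omega}(xvy)=0$, which completes the argument.

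I expect the main obstacle to be precisely this last step: the relation $v\in B^\omega\ominus B$ only records weak-null behaviour of $(v_n)$ \emph{along} $\omega$ and tested only against $B$, whereas Definition~\ref{D:mixing} is phrased for honest weak-operator-null sequences tested against all of $M$. Bridging this gap is what forces the diagonal subsequence extraction together with the observation that orthogonality to $B$ already forces weak-operator convergence to $0$ in $M$ (via $\tau(wv_{n_j})=\tau(\E_B(w)v_{n_j})$). The remaining ingredients—the trace-norm approximation of elements of $B^\omega\ominus B$ by self-adjoint unitaries (Proposition~\ref{P:ultrapower algebra minus B}) and the trace-norm contractivity of $c\mapsto\E_{B^\omega}(xcy)$—are comparatively routine.
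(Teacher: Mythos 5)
Your proof is correct and takes essentially the same route as the paper's: pass to the ultrapower, realize the sequence as an element of $B^\omega\ominus B$, approximate it in trace norm by combinations of unitaries in $B^\omega\ominus B$ via Proposition~\ref{P:ultrapower algebra minus B}, and conclude by the trace-norm contractivity of $c\mapsto\E_{B^\omega}(xcy)$. The only substantive difference is that you carefully justify, by the diagonal subsequence extraction, the step the paper dismisses with ``it is easy to see'' --- namely that mixing for genuine weak-operator-null sequences of unitaries in $B$ forces $\E_{B^\omega}(xuy)=0$ for every unitary $u\in B^\omega\ominus B$ --- which is a worthwhile addition since, as you note, membership in $B^\omega\ominus B$ only records weak-null behaviour along $\omega$ tested against $B$.
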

\begin{proof}
Let $\omega$ be a free ultrafilter of the set of natural numbers and let $M^\omega$ be the ultrapower algebra of $M$. Then $M^\omega$ is a finite von Neumann algebra with a faithful normal trace $\tau_\omega$.
We can identify $B^\omega$ with a von Neumann subalgebra of $M^\omega$ in the natural way.  Every bounded sequence $(b_n)$ of $B$ defines an element $z$ in $B^\omega$. We may assume that $\|z\|\leq 1$. It is easy to see that $\lim_{n\rightarrow \omega}b_n=0$ in the weak operator topology if and only if
\[
 \tau_\omega(zb)=0,\quad \forall b\in B.
\]
Recall that
$M\ominus B=\{x\in M:\tau(x^*b)=0\,\text{for all}\, b\in B\}$.  It is easy to see Definition~\ref{D:mixing} is equivalent to  the following:  For any  elements $x,y$ in
$M\ominus B$, and $u\in B^\omega\ominus B$, one has $\E_{B^\omega}(xuy)=0$.
By Proposition~\ref{P:ultrapower algebra minus B}, $(B^\omega\ominus B)_1$ is the trace norm closure of the convex hull of unitary operators in $B^\omega\ominus B$. Let $\epsilon>0$. Then there exist unitary operators $u_1,\ldots, u_n$ in $B^\omega\ominus B$ and positive numbers $\alpha_1,\ldots, \alpha_n$ with $\alpha_1+\cdots+\alpha_n=1$ such that
\[
 \left\|z-\sum_{k=1}^n\alpha_ku_k\right\|_{2,\tau_\omega}<\epsilon.
\]
For any  elements $x$ and $y$ in
$M\ominus B$,
\[
 \|\E_{B^\omega}(xzy)\|_{2,\tau_\omega}=\left\|\E_{B^\omega}\left(x\left(z-\sum_{k=1}^n\alpha_ku_k\right)y\right)\right\|_{2,\tau_\omega}
\leq \left\|x\left(z-\sum_{k=1}^n\alpha_ku_k\right)y\right\|_{2,\tau_\omega}\]\[\leq \|x\|\cdot \left\|z-\sum_{k=1}^n\alpha_ku_k\right\|_{2,\tau_\omega}\cdot\|y\|\leq \epsilon \|x\|\|y\|.
\]
Since $\epsilon>0$ is arbitrary, $\E_{B^\omega}(xzy)=0$, which is equivalent to
\[\lim_{n\rightarrow \infty}\|\E_B(xb_ny)\|_2=0.
\]
\end{proof}

Two applications of the above theorem are the following.

\begin{Corollary}
 If $B$ is a mixing von Neumann subalgebra of $M$ and $k$ is a positive integer, then $M_k(\cc)\otimes B$ is mixing in $M_k(\cc)\otimes M$.
\end{Corollary}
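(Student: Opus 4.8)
The plan is to reduce the mixing condition for the inclusion $M_k(\cc)\otimes B\subseteq M_k(\cc)\otimes M$ to the mixing condition for $B\subseteq M$, applied entrywise in the matrix factor. Equip $\widetilde M=M_k(\cc)\otimes M$ with the trace $\widetilde\tau=\mathrm{tr}_k\otimes\tau$ and write $\widetilde B=M_k(\cc)\otimes B$, so that the trace-preserving conditional expectation is $\E_{\widetilde B}=\mathrm{id}_{M_k}\otimes\E_B$. Let $\{e_{ij}\}$ be the standard matrix units of $M_k(\cc)$. By Remark~\ref{R:mixing} it suffices to verify the mixing identity for $X$ and $Y$ ranging over the elementary tensors $a\otimes x$, $c\otimes y$ (with $a,c\in M_k(\cc)$ and $x,y\in M$), since these generate $\widetilde M$; the general case follows by bilinearity.

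Let $\{U_n\}$ be a sequence of unitaries in $\widetilde B$ with $U_n\to 0$ in the weak operator topology, and write $U_n=\sum_{i,j=1}^k e_{ij}\otimes (U_n)_{ij}$ with $(U_n)_{ij}\in B$. Compressing by the projections $e_{ii}\otimes 1$ gives $(e_{ii}\otimes 1)U_n(e_{jj}\otimes 1)=e_{ij}\otimes(U_n)_{ij}$ and hence $\|(U_n)_{ij}\|\le\|U_n\|=1$, so each $\{(U_n)_{ij}\}_n$ is a bounded sequence in $B$. Moreover, since $\widetilde\tau\big(U_n(e_{ji}\otimes b)\big)=\tfrac1k\tau\big((U_n)_{ij}b\big)$ for every $b\in B$, the hypothesis $U_n\to 0$ weakly forces $(U_n)_{ij}\to0$ in the weak operator topology for each pair $(i,j)$.

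Next I would compute the relevant difference on elementary tensors. A direct expansion gives
\[
\E_{\widetilde B}\big((a\otimes x)U_n(c\otimes y)\big)-\E_{\widetilde B}(a\otimes x)\,U_n\,\E_{\widetilde B}(c\otimes y)=\sum_{i,j=1}^k (a e_{ij} c)\otimes C_{ij}^{(n)},
\]
where $C_{ij}^{(n)}=\E_B\big(x(U_n)_{ij}y\big)-\E_B(x)(U_n)_{ij}\E_B(y)$. Since $\|A\otimes C\|_{2,\widetilde\tau}=\|A\|_{2,\mathrm{tr}_k}\,\|C\|_{2,\tau}$, the triangle inequality bounds the $\|\cdot\|_{2,\widetilde\tau}$-norm of the left-hand side by $\sum_{i,j}\|a e_{ij}c\|_{2,\mathrm{tr}_k}\,\|C_{ij}^{(n)}\|_2$, a sum of at most $k^2$ terms.

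It remains to show $\|C_{ij}^{(n)}\|_2\to0$ for each fixed $(i,j)$. This is exactly the mixing identity for $B\subseteq M$ applied to the bounded sequence $b_n=(U_n)_{ij}$, rather than to a sequence of unitaries. Writing $x=x_0+\E_B(x)$ and $y=y_0+\E_B(y)$ with $\E_B(x_0)=\E_B(y_0)=0$, the $B$-bimodule property of $\E_B$ collapses three of the four cross terms (each vanishes because $\E_B(x_0)=0$ or $\E_B(y_0)=0$ and $b_n\in B$), leaving $C_{ij}^{(n)}=\E_B\big(x_0\,(U_n)_{ij}\,y_0\big)$, which tends to $0$ in $\|\cdot\|_2$ by Theorem~\ref{L:mixing to arbitary sequences}. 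Because the sum is finite, the whole expression tends to $0$, establishing mixing of $M_k(\cc)\otimes B$ in $M_k(\cc)\otimes M$. The one genuine subtlety—and the reason the result is not completely immediate—is that the matrix entries $(U_n)_{ij}$ of a unitary $U_n$ are in general not unitary but merely contractions; this is precisely why the extension of the mixing property from unitary sequences to arbitrary bounded sequences, namely Theorem~\ref{L:mixing to arbitary sequences}, is the essential ingredient.
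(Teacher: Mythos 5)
Your proof is correct and follows essentially the same route as the paper: both reduce the statement to the matrix entries, observe that the entries of the unitaries $U_n$ are merely bounded sequences in $B$ converging weakly to $0$, and then invoke Theorem~\ref{L:mixing to arbitary sequences} entrywise. You correctly identify the one real subtlety (non-unitarity of the entries) that makes that theorem the essential ingredient, exactly as in the paper's argument.
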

\begin{proof}
 Note that $x=(x_{ij})\in (M_k(\cc)\otimes M)\ominus (M_k(\cc)\otimes B)$ if and only if $x_{ij}\in M\ominus B$ for all $1\leq i,j\leq k$, and $b_n=(b_{ij}^n)\in M_k(\cc)\otimes B$ converges to 0 in the weak operator topology if and only if $b_{ij}^n$ converges to 0 in the weak operator topology for all $1\leq i,j\leq k$. Now the corollary follows from Theorem~\ref{L:mixing to arbitary sequences}.
\end{proof}

\begin{Corollary}\label{L:cut down of strongly mixing subalgebras}
 If $B$ is a mixing von Neumann subalgebras of $M$ and $e$ is a projection of $B$, then $eBe$ is mixing in $eMe$.
\end{Corollary}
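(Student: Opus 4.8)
The plan is to reduce the mixing property of $eBe\subseteq eMe$ directly to Theorem~\ref{L:mixing to arbitary sequences} for $B\subseteq M$. The essential observation is that a \emph{unitary} of the corner $eBe$ (i.e.\ an element $u$ with $u^*u=uu^*=e$) is, viewed inside $B$, merely a partial isometry of norm one; hence the raw Definition~\ref{D:mixing} for $B\subseteq M$ does not apply to such a sequence, and one must instead invoke the strengthening of Theorem~\ref{L:mixing to arbitary sequences}, which permits arbitrary bounded sequences converging weakly to $0$. First I would fix the structural data on the corner. Equip $eMe$ with the trace $\tau_e=\tau(e)^{-1}\tau|_{eMe}$. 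Since $e\in B$ and $\E_B$ is a $B$-bimodule map, for every $w\in eMe$ we have $\E_B(w)=\E_B(ewe)=e\,\E_B(w)\,e\in eBe$; thus $\E_B|_{eMe}$ is a $\tau_e$-preserving conditional expectation of $eMe$ onto $eBe$, and by uniqueness $\E_{eBe}=\E_B|_{eMe}$. Moreover $\|w\|_{2,\tau_e}^2=\tau(e)^{-1}\|w\|_{2,\tau}^2$ for $w\in eMe$, so a sequence in $eMe$ tends to $0$ in $\|\cdot\|_{2,\tau_e}$ if and only if it does in $\|\cdot\|_{2,\tau}$.

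By the equivalent reformulation of the mixing condition recorded after Definition~\ref{D:mixing}, it suffices to prove: for all $x,y\in eMe$ with $\E_{eBe}(x)=\E_{eBe}(y)=0$, and every sequence of unitaries $\{u_n\}$ in $eBe$ with $\lim_n u_n=0$ in the weak operator topology, one has $\lim_n\|\E_{eBe}(xu_ny)\|_{2,\tau_e}=0$. Fix such $x,y$ and $\{u_n\}$. By the first step $\E_B(x)=\E_B(y)=0$, so $x,y\in M\ominus B$, while each $u_n$ is a partial isometry in $B$ with $\|u_n\|=1$, so $\{u_n\}$ is a bounded sequence in $B$.

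Next I would verify that $u_n\to 0$ in the weak operator topology of the corner $eMe$ forces $u_n\to 0$ in the weak operator topology of $M$. Since $u_n=eu_ne$, for any $a,b\in M$ the trace identity gives $\tau(b^*u_na)=\tau(b^*eu_nea)=\tau\!\big(u_n\,(eab^*e)\big)$, and $eab^*e\in eMe$; as weak convergence of $u_n$ to $0$ in $eMe$ is equivalent to $\tau(u_nz)\to 0$ for all $z\in eMe$, we obtain $\tau(b^*u_na)\to 0$, and boundedness of $\{u_n\}$ then yields $u_n\to 0$ in the weak operator topology of $M$. Finally I would apply Theorem~\ref{L:mixing to arbitary sequences} with $b_n=u_n$: since $B$ is mixing and $\E_B(x)=\E_B(y)=0$, it follows that $\|\E_B(xu_ny)\|_{2,\tau}\to 0$. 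But $xu_ny=e\,xu_ny\,e\in eMe$, so $\E_{eBe}(xu_ny)=\E_B(xu_ny)$, and the norm comparison from the first step gives $\|\E_{eBe}(xu_ny)\|_{2,\tau_e}=\tau(e)^{-1/2}\|\E_B(xu_ny)\|_{2,\tau}\to 0$, completing the proof.

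The main obstacle is precisely the point flagged at the outset: the test sequence consists of unitaries of $eBe$ rather than of $B$, so the plain definition of mixing is unavailable and the argument depends crucially on Theorem~\ref{L:mixing to arbitary sequences}. A secondary technical point, handled in the third step, is the compatibility of weak operator convergence under passage between $M$ and its corner $eMe$; everything else is the bookkeeping of conditional expectations and trace norms on the corner.
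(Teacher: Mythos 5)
Your proposal is correct and follows essentially the same route as the paper: both reduce the corner statement to Theorem~\ref{L:mixing to arbitary sequences} by observing that a unitary of $eBe$ is merely a bounded element of $B$, that $eMe\ominus eBe\subseteq M\ominus B$, and that $\E_{eBe}$ agrees with $\E_B$ on $eMe$. You simply spell out the bookkeeping (trace normalization, compatibility of weak operator convergence between $M$ and the corner) that the paper leaves implicit.
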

\begin{proof}
 Let $(b_n)$ be a bounded sequence of $eBe$ which converges to 0 in the weak operator topology. For $x,y\in eMe\ominus eBe$, we have $x,y\in M\ominus B$. By Theorem~\ref{L:mixing to arbitary sequences},
\[\lim_{n\rightarrow \infty}\|\E_{eBe}(xb_ny)\|_2=\lim_{n\rightarrow \infty}\|\E_{B}(xb_ny)\|_2=0.
\]\end{proof}
It is well-known that the presence of centralizing sequences in a masa for its containing II$_1$ factor is a  conjugacy invariant for the masa.  More generally, it is possible to build non-conjugate masas of a II$_1$ factor by controlling the existence of centralizing sequences in various cutdowns of each masa.  Sinclair and White~\cite{S-W} developed this technique to produce uncountably many non-conjugate weakly mixing masas in the hyperfinite $\rm{II}_1$ factor with the same Puk\'{a}nszky invariant. The final result of this section implies that, in contrast to the larger class of weakly mixing masas, there is no hope of distinguishing mixing masas along these lines.  Following the notation of~\cite{S-W}, for a von Neumann subalgebra $B$ of a ${\rm II}_1$ factor $M$, we denote  by $\Gamma(B)$ the maximal trace of a projection $e\in B$ for which $eBe$ contains a non-trivial centralizing sequences for $eMe$.

\begin{Proposition}
 If $B$ is a  mixing subalgebra of a type ${\rm II}_1$ factor $M$ and $eBe\neq eMe$ for each nonzero projection $e\in B$,  then $\Gamma(B)=0$.
\end{Proposition}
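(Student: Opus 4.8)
The plan is to argue by contradiction, first collapsing the statement to a single cutdown and then annihilating the resulting central sequence with the ultrapower form of mixing. Suppose $\Gamma(B)>0$, so there is a nonzero projection $e\in B$ for which $eBe$ carries a non-trivial centralizing sequence for $eMe$. By Corollary~\ref{L:cut down of strongly mixing subalgebras}, $eBe$ is mixing in $eMe$, and for every nonzero projection $f\le e$ in $B$ one has $f(eMe)f=fMf\neq fBf=f(eBe)f$, so the inclusion $eBe\subseteq eMe$, with $eMe$ again a type ${\rm II}_1$ factor, satisfies all the standing hypotheses. Replacing $B\subseteq M$ by $eBe\subseteq eMe$, I may therefore assume $e=1$: there is a bounded sequence $(c_n)\subset B$ that asymptotically commutes with $M$ but is not asymptotically scalar. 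Fixing a free ultrafilter $\omega$ adapted to a subsequence on which $\|c_n-\tau(c_n)1\|_2$ stays bounded below, I obtain $c=(c_n)\in B^\omega\subseteq M^\omega$ which commutes with $M$; after replacing $c$ by $c-\tau_\omega(c)1$ I may assume $\tau_\omega(c)=0$ while $c\neq 0$.

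The heart of the argument is to evaluate $\E_{B^\omega}(xcy)$ in two ways for $x,y\in M\ominus B$. Using $cy=yc$ together with the $B^\omega$-bimodularity of $\E_{B^\omega}$ gives $\E_{B^\omega}(xcy)=\E_{B^\omega}(xy)c=\E_B(xy)\,c$. Writing $c=\E_B(c)+w$ with $w=c-\E_B(c)\in B^\omega\ominus B$ (here $\E_B$ is the expectation onto the original algebra $B$), the first summand contributes $\E_{B^\omega}(x\E_B(c)y)=\E_B(x\E_B(c)y)\in B$, while $\E_{B^\omega}(xwy)=0$ by the ultrapower characterization of mixing recorded in the proof of Theorem~\ref{L:mixing to arbitary sequences} (which rests on Proposition~\ref{P:ultrapower algebra minus B} and is valid for all $x,y\in M\ominus B$ and all $w\in B^\omega\ominus B$). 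Comparing the two evaluations and applying the expectation $B^\omega\to B$ to discard the part lying in $B$, I arrive at the key relation $\E_B(xy)\,w=0$ for all $x,y\in M\ominus B$.

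To finish, I specialize to $y=x^*$ (note $x^*\in M\ominus B$), so that $\E_B(xx^*)\,w=0$ and hence the support projection $s_x\in B$ of the positive element $\E_B(xx^*)$ satisfies $s_xw=0$. Setting $s=\bigvee_{x\in M\ominus B}s_x$ and using that the projections left-annihilating $w$ are precisely those dominated by $1$ minus the left support of $w$, I get $sw=0$. I claim $s=1$: otherwise $p=1-s$ is a nonzero projection in $B$ with $p\,\E_B(xx^*)=0$, i.e. $\E_B\big((px)(px)^*\big)=0$ and so $px=0$, for every $x\in M\ominus B$; decomposing $m=\E_B(m)+(m-\E_B(m))$ then yields $pm=p\E_B(m)\in B$ for all $m\in M$, whence $pMp=pBp$, contradicting $eBe\neq eMe$. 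Thus $s=1$, forcing $w=0$ and $c=\E_B(c)\in B$. But an element of $B\subseteq M$ commuting with $M$ lies in $\cc 1$, and $\tau_\omega(c)=0$ then gives $c=0$, the desired contradiction. Hence $\Gamma(B)=0$.

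The analytic content of mixing is entirely absorbed into the ultrapower statement of Theorem~\ref{L:mixing to arbitary sequences}, so the remaining work is careful bookkeeping with the two conditional expectations $M^\omega\to B^\omega$ and $B^\omega\to B$. I expect the main obstacle to be the final support step: converting the non-degeneracy hypothesis $eBe\neq eMe$ into the assertion that no nonzero projection of $B$ annihilates all of $M\ominus B$, and justifying the passage from $s_xw=0$ for each $x$ to $\big(\bigvee_x s_x\big)\,w=0$.
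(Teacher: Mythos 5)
Your argument is correct, but it takes a genuinely different route from the paper's. The paper makes the same initial reduction (via Corollary~\ref{L:cut down of strongly mixing subalgebras}, it suffices to rule out nontrivial centralizing sequences at $e=1$), but from there it argues quantitatively with a single element: fixing one nonzero $x\in M\ominus B$, it expands $\|xb_n-b_nx\|_2^2$ and bounds it below by $\tau(x^*xb_nb_n^*)-2\mathrm{Re}\,\tau\bigl(b_n^*\E_B(x^*b_nx)\bigr)$; the first term tends to $\|x\|_2^2\lim\|b_n\|_2^2$ because $\{b_nb_n^*\}$ is again central and the trace of a ${\rm II}_1$ factor is asymptotically multiplicative on central sequences, while the second term vanishes by Theorem~\ref{L:mixing to arbitary sequences}. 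This forces $\lim\|b_n\|_2=0$ outright. You instead stay in the ultrapower: commutation of $c$ with $M$ plus $B^\omega$-bimodularity of $\E_{B^\omega}$ yields the exact identity $\E_B(xy)w=0$ for the $B$-orthogonal part $w$ of $c$, and a support-projection argument converts the hypothesis $eBe\neq eMe$ into $\bigvee_x s_x=1$, killing $w$. Both proofs rest on the same analytic engine (the ultrapower reformulation of mixing underlying Theorem~\ref{L:mixing to arbitary sequences}); the paper's computation is shorter and needs only one element of $M\ominus B$ per cutdown, whereas yours is more structural, isolating an algebraic identity and making explicit where the non-degeneracy hypothesis enters, at the cost of the support bookkeeping you correctly flagged. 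The steps worth verifying in your write-up are all routine and all hold: $\E_{B^\omega}|_M=\E_B$; that $aw=0$ with $a\geq 0$ gives $s_aw=0$ (apply $f(a)w=0$ for continuous $f$ with $f(0)=0$); and that the join of the $s_x$ computed in $B$ agrees with the join computed in $B^\omega$.
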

\begin{proof}
 By Corollary~\ref{L:cut down of strongly mixing subalgebras}, we need only show that there is no nontrivial central sequence $\{b_n\}$ in $B$ of $M$. Suppose $\{b_n\}\subset B$ is a  central sequence  of $M$.  We may assume that $\tau(b_n)=0$ for each $n$. Suppose $\lim_{n\rightarrow \omega}b_n=z\in B$ in the weak operator topology, then for all $x \in M,$
\[zx=\lim_{n\rightarrow\omega}b_nx=\lim_{n\rightarrow\omega}xb_n=xz.\]

Since $M$ is a type ${\rm II}_1$ factor, $z=\tau(z)1=0$. Hence $\lim_{n\rightarrow \omega}b_n=0$ in the weak operator topology.
Choose a nonzero element $x\in M$ such that $\tau(xb)=0$ for all $b\in B$. Note that
\[
 \|xb_n-b_nx\|_2^2=\|xb_n\|_2^2+\|b_nx\|_2^2-2Re\tau(b_n^*x^*b_nx)
\]
\[\geq \tau(b_n^*x^*xb_n)-2Re\tau(b_n^*\E_B(x^*b_nx))\]\[=\tau(x^*xb_nb_n^*)-2Re\tau(b_n^*\E_B(x^*b_nx)).\]
Since $\{b_n\}$ is a central sequence of $M$, $\{b_nb_n^*\}$ is also a central sequence of $M$. The uniqueness of the trace on $M$ implies that
\[
\lim_{n\rightarrow \omega}\tau(x^*xb_nb_n^*) =\lim_{n\rightarrow\omega}\tau(x^*x)\tau(b_nb_n^*)=\lim_{n\rightarrow\omega}\|x\|_2^2 \cdot \|b_n\|_2^2.
\]
By Theorem~\ref{L:mixing to arbitary sequences},
\[
0=\lim_{n\rightarrow \infty}\|xb_n-b_nx\|_2\geq \|x\|_2\lim_{n\rightarrow \infty}\|b_n\|_2,
\] which implies that $\lim_{n\rightarrow\omega}\|b_n\|_2=0$. This completes the proof.
\end{proof}

\begin{Corollary}
 If $B$  is a mixing masa of a type ${\rm II}_1$ factor $M$, then $\Gamma(B)=0$.
\end{Corollary}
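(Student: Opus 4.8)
The plan is to observe that this Corollary is an immediate consequence of the preceding Proposition, so the only real work is to verify that a mixing masa $B$ automatically satisfies the hypothesis $eBe\neq eMe$ for every nonzero projection $e\in B$. First I would recall that a masa is, by definition, abelian; hence for any nonzero projection $e\in B$ the corner $eBe$ is a subalgebra of $B$ (since $e\in B$ forces $ebe\in B$ for all $b\in B$), and is therefore abelian as well. On the other hand, because $M$ is a type ${\rm II}_1$ factor and $e$ is a nonzero projection of $M$, the reduced algebra $eMe$ is again a type ${\rm II}_1$ factor, and in particular is non-abelian (it is infinite-dimensional and non-commutative). Consequently $eBe$, being abelian, cannot coincide with the non-abelian algebra $eMe$, which yields $eBe\neq eMe$.

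With this hypothesis in hand, I would simply invoke the Proposition to conclude that $\Gamma(B)=0$. The only point that genuinely requires justification — and thus the main, very mild, obstacle — is the standard structural fact that the reduction $eMe$ of a type ${\rm II}_1$ factor by a nonzero projection $e$ is again a type ${\rm II}_1$ factor; once this is granted, the contrast between the abelianness of $eBe$ and the non-abelianness of $eMe$ makes the strict inequality $eBe\neq eMe$ automatic for \emph{every} nonzero projection $e\in B$, and no further estimate or computation is needed. The result then follows directly from the Proposition.
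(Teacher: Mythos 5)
Your argument is correct and is exactly the (implicit) reasoning behind the paper's corollary, which is stated without proof as an immediate consequence of the preceding Proposition: since $B$ is abelian, every corner $eBe$ is abelian, while $eMe$ is a type ${\rm II}_1$ factor and hence non-abelian, so the hypothesis $eBe\neq eMe$ holds for every nonzero projection $e\in B$.
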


\section{Mixing inclusions of group von Neumann algebras}  \label{section:group}
In this section, we apply our operator-algebraic machinery to the special case
of mixing inclusions of von Neumann algebras that arise from actions of countable, discrete
groups.  This direction was taken up in~\cite{J-S}, where it was shown that, for an infinite abelian subgroup $\Gamma_0$ of a countable group $\Gamma,$ the inclusion $L(\Gamma_0) \subset L(\Gamma)$ is mixing if and only if the following condition (called (ST)) is satisfied: 

\medskip
  For every finite subset $C$ of $\Gamma \setminus \Gamma_0,$ there exists a finite exceptional set $E \subset \Gamma_0$ such that $g \gamma h \notin \Gamma_0$ for all $g_0 \in \Gamma_0 \setminus E$ and $g,h \in C$.  \medskip

Theorem \ref{T:group algebra and stongly mixing} of this section supplies a similar characterization for the case in which $\Gamma_0$ is not abelian, and also establishes a connection between the group normalizer of the subgroup $\Gamma_0$ and the ``analytic" normalizer of its associated group von Neumann algebra.  The key observation required is the following, which shows that mixing subalgebras satisfy a much stronger form of singularity.  

\begin{Theorem}\label{T:mixing} Let $B$ be a  mixing von
Neumann subalgebra of $M$, and
suppose that $A$ is a diffuse von Neumann subalgebra of  $B$. If $y \in M$ satisfies
$yAy^*\subseteq B$, then $y\in B$.
\end{Theorem}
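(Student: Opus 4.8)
The plan is to show that $z:=y-\E_B(y)$ vanishes, for then $y=\E_B(y)\in B$. Since $A$ is diffuse it contains a Haar unitary $u$, so $u^n\to 0$ in the weak operator topology. Because $u^n\in A$, the hypothesis gives $yu^ny^*\in B$, hence $\E_B(yu^ny^*)=yu^ny^*$. Writing $y=b+z$ with $b=\E_B(y)$ and using that $\E_B$ is a $B$-bimodule map with $\E_B(z)=0$, the two mixed terms drop out and $\E_B(yu^ny^*)=bu^nb^*+\E_B(zu^nz^*)$. Comparing the two expressions yields the identity
\[
 zu^nz^*=\E_B(zu^nz^*)-bu^nz^*-zu^nb^*.
\]

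First I would show $\|zu^nz^*\|_2\to 0$. Pairing the displayed identity with $zu^nz^*$ in $L^2(M)$ produces three terms. The diagonal term equals $\|\E_B(zu^nz^*)\|_2^2$, which tends to $0$ by the mixing hypothesis since $z,z^*\in M\ominus B$. Each of the other two terms, after a cyclic rotation of the trace, takes the shape $\tau(u^{-n}x\,u^n w)$ or $\tau(u^{-n}w\,u^n x)$ with $x\in\{z^*b,\,b^*z\}\subset M\ominus B$ and $w:=z^*z$. Splitting $w=\E_B(w)+(w-\E_B(w))$, the part involving $\E_B(w)\in B$ vanishes identically, because $u^{-n}\E_B(w)u^n\in B$ and $\tau(x\,b')=\tau(\E_B(x)b')=0$ for every $b'\in B$; the part involving $w-\E_B(w)\in M\ominus B$ is dominated by a term of the form $\|\E_B(x\,u^n(w-\E_B(w)))\|_2$ and tends to $0$ by Theorem~\ref{L:mixing to arbitary sequences}. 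Hence $\|zu^nz^*\|_2\to 0$, that is, $\tau(u^{-n}wu^nw)\to 0$ with $w=z^*z\ge 0$.

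To pass from this scalar decay to $z=0$ I would average. Being a null sequence, the Cesàro means also tend to $0$, and
\[
 \frac1N\sum_{n=0}^{N-1}\tau(u^{-n}wu^nw)=\tau\!\Big(\big(\tfrac1N\textstyle\sum_{n=0}^{N-1}u^{-n}wu^n\big)\,w\Big).
\]
By von Neumann's mean ergodic theorem for the unitary $\mathrm{Ad}(u)$ on $L^2(M,\tau)$, the averages $\tfrac1N\sum_{n=0}^{N-1}u^{-n}wu^n$ converge in $\|\cdot\|_2$ to $\E_{\{u\}'\cap M}(w)$, so the right-hand side converges to $\|\E_{\{u\}'\cap M}(w)\|_2^2$. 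Therefore $\E_{\{u\}'\cap M}(w)=0$; since $\tau=\tau\circ\E_{\{u\}'\cap M}$ this forces $\tau(w)=0$, and faithfulness of $\tau$ together with $w=z^*z\ge 0$ gives $z=0$, i.e.\ $y\in B$.

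The crux is the middle step. The cross terms $bu^nz^*$ and $zu^nb^*$ do not vanish individually, and a direct norm estimate only delivers the pointwise decay $\tau(u^{-n}wu^nw)\to0$, which by itself does not force $w=0$ (for instance when $w$ is carried far from itself by $\mathrm{Ad}(u^n)$). Two ideas rescue the argument: first, every surviving trace must be massaged into the form $\tau(u^{-n}x\,u^n\,\cdot)$ with $x\in M\ominus B$, so that the mixing property of Theorem~\ref{L:mixing to arbitary sequences} applies after splitting off the $B$-part of $z^*z$; second, the mean ergodic theorem is what upgrades the pointwise decay into the decisive statement $\E_{\{u\}'\cap M}(z^*z)=0$, from which $z=0$ follows. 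I expect the orthogonality bookkeeping in the first point and the correct identification of the ergodic limit in the second to be the only genuine difficulties.
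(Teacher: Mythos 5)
Your argument is correct, and it reaches the conclusion by a route that differs from the paper's in a meaningful way. The paper fixes an arbitrary $x\in M\ominus B$ and shows $\tau(xy)=0$ directly: it bounds $|\tau(xy)|^2$ by $\|\E_{A'\cap M}(xy)\|_2^2$, realizes that conditional expectation as the Ces\`aro limit of $w^k(xy)(w^*)^k$ for a Haar unitary $w\in A$, and after expanding the double sum is left with terms $\|\E_B(x(yw^{m}y^*)x^*)\|_2$. Since $yw^my^*$ is a bounded but generally non-unitary sequence in $B$ tending weakly to $0$, the paper must invoke Theorem~\ref{L:mixing to arbitary sequences} at this point, and hence the whole ultrapower machinery of Section~\ref{section:unitary}. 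Your proof instead isolates $z=y-\E_B(y)$, extracts the exact identity $zu^nz^*=\E_B(zu^nz^*)-bu^nz^*-zu^nb^*$ from $yu^ny^*=\E_B(yu^ny^*)$, and kills $\|zu^nz^*\|_2$ using mixing applied only to the unitary powers $u^n$ themselves (your citation of Theorem~\ref{L:mixing to arbitary sequences} for the cross terms is unnecessary --- Definition~\ref{D:mixing} already suffices there, since the middle factor is the unitary $u^n$). The final step in both proofs is the same ergodic ingredient --- von Neumann's mean ergodic theorem identifying the Ces\`aro limit of $u^{-n}(\cdot)u^n$ with $\E_{\{u\}'\cap M}$ --- though you apply it to $z^*z$ while the paper applies it to $xy$. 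What your version buys is a proof of Theorem~\ref{T:mixing} that is independent of the bounded-sequence upgrade and of Proposition~\ref{P:ultrapower algebra minus B}; what the paper's version buys is that the same computation, phrased for arbitrary $x\in M\ominus B$, plugs directly into the singularity and normalizer statements elsewhere in Section~\ref{section:subalgebras}. All the bookkeeping you flag as delicate checks out: $z^*b$, $b^*z$, $z$, $z^*$ and $z^*z-\E_B(z^*z)$ all lie in $M\ominus B$, the $B$-parts vanish by the module property of $\E_B$ after a cyclic rotation, and $\tau(\E_{\{u\}'\cap M}(z^*z))=\tau(z^*z)$ forces $z=0$ by faithfulness.
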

\begin{proof}  We may assume that $A$ is a
diffuse abelian von Neumann algebra. Then $A$ is generated by a
Haar unitary operator $w$. In particular,
$\lim_{n\rightarrow\infty}w^n=0$ in the weak operator topology.
Let $x\in M$ and $\E_B(x)=0$. Then
\[
|\tau(xy)|^2\leq \|\E_{A'\cap M}(xy)\|_2^2.
\]
 Note that
\[\E_{A'\cap M}(xy)=\lim_{n\rightarrow \omega} \frac{\sum_{k=1}^n w^k(xy)(w^*)^k}{n}
\] in the weak operator topology. Hence,
\begin{eqnarray*}
  |\tau(xy)|^2 &\leq & \|\E_{A'\cap M}(xy)\|_2^2 \\
   &\leq & \lim_{n\rightarrow \omega} \left\|\frac{\sum_{k=1}^n w^k(xy)(w^*)^k}{n}\right\|_2^2 \\
   &=&\lim_{n\rightarrow \omega}\frac{1}{n^2}\sum_{i,j=1}^n \tau(w^i(xy)(w^*)^iw^j(y^*x^*)(w^*)^j) \\
   &\leq & \lim_{n\rightarrow \omega}\frac{1}{n^2}\sum_{i,j=1}^n|\tau(x(yw^{j-i}y^*)x^*(w^*)^{j-i})| \\
   &\leq & \lim_{n\rightarrow
   \omega}\frac{1}{n^2}\sum_{i,j=1}^n\|\E_B(x(yw^{j-i}y^*)x^*(w^*)^{j-i})\|_2\\
   &=& \lim_{n\rightarrow
   \omega}\frac{1}{n^2}\sum_{i,j=1}^n\|\E_B(x(yw^{j-i}y^*)x^*)\|_2.
\end{eqnarray*}
By hypothesis, $yw^{n}y^*\in B$.  Note that
$\lim_{n\rightarrow \infty}yw^{n}y^*=0$ in the weak operator
topology. By Theorem~\ref{L:mixing to arbitary sequences},
\[\lim_{n\rightarrow\infty}\|\E_B(x(yw^{n}y^*)x^*)\|_2=0.
\]
So
\[|\tau(xy)|^2\leq \lim_{n\rightarrow
   \omega}\frac{1}{n^2}\sum_{i,j=1}^n\|\E_B(x(yw^{j-i}y^*)x^*)\|_2=0.
\]
Therefore, $\tau(xy)=0$ for all $y\in M\ominus B$.
This implies that $y\in B$.
\end{proof}

\begin{Remark}\emph{In Theorem ~\ref{T:mixing}, it is not necessary that the unit of $A$ be the same as the unit of $B$. }
\end{Remark}

\begin{Theorem}\label{T:group algebra and stongly mixing} Let
$M=L(\Gamma)$ and $B=L(\Gamma_0)$. Then the following conditions
are equivalent:
\begin{enumerate}
\item $B=L(\Gamma_0)$ is mixing in $M=L(\Gamma)$;

\item $g\Gamma_0 g^{-1}\cap \Gamma_0$ is a finite group for every
$g\in \Gamma\setminus \Gamma_0$;

\item for every diffuse von Neumann subalgebra $A$ of $B$ and
every unitary operator $v\in M$, if $vAv^*\subseteq B$, then $v\in
B$;

\item for every diffuse von Neumann subalgebra $A$ of $B$ and
every  operator $y\in M$, if $yAy^*\subseteq B$, then $y\in
B$.
\end{enumerate}
\end{Theorem}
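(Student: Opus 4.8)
The plan is to establish the cycle of implications $(1)\Rightarrow(4)\Rightarrow(3)\Rightarrow(2)\Rightarrow(1)$. The first implication requires no new work: it is exactly the content of Theorem~\ref{T:mixing} applied to the mixing inclusion $L(\Gamma_0)\subseteq L(\Gamma)$, and $(4)\Rightarrow(3)$ is immediate since every unitary of $M$ is in particular an element of $M$. The substance of the argument therefore lies in the two remaining implications, $(3)\Rightarrow(2)$ and $(2)\Rightarrow(1)$, where one translates between the operator-algebraic and the purely group-theoretic formulations.

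For $(3)\Rightarrow(2)$ I would argue by contraposition. Suppose (2) fails, so there is $g\in\Gamma\setminus\Gamma_0$ for which $H:=g\Gamma_0 g^{-1}\cap\Gamma_0$ is infinite. Since $H$ is an infinite subgroup of $\Gamma_0$, the algebra $A:=L(H)$ is a diffuse von Neumann subalgebra of $B=L(\Gamma_0)$. Taking the unitary $v:=u_g^{*}\in M$, I would compute $vAv^{*}=L(g^{-1}Hg)$; the definition of $H$ forces $g^{-1}Hg\subseteq\Gamma_0$ (each $h\in H$ is of the form $gkg^{-1}$ with $k\in\Gamma_0$, whence $g^{-1}hg=k\in\Gamma_0$), so $vAv^{*}\subseteq B$. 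However $v=u_{g^{-1}}\notin B$ because $g^{-1}\notin\Gamma_0$. This produces a diffuse $A\subseteq B$ normalized out of $B$ by $v$, contradicting (3). The only points needing care are the standard facts that $L(H)$ is diffuse precisely when $H$ is infinite, and that $u_g L(K)u_g^{*}=L(gKg^{-1})$ for a subgroup $K$.

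The analytic heart is $(2)\Rightarrow(1)$. By Remark~\ref{R:mixing} it suffices to verify the mixing condition with $x=u_g$, $y=u_h$ ranging over the canonical generators $\{u_g:g\in\Gamma\}$. When either $g$ or $h$ lies in $\Gamma_0$ the quantity $\E_B(u_g u_n u_h)-\E_B(u_g)u_n\E_B(u_h)$ collapses to $0$ after factoring the $\Gamma_0$-element through the expectation, so the only genuine case is $g,h\in\Gamma\setminus\Gamma_0$, where one must show $\|\E_B(u_g u_n u_h)\|_2\to 0$ for unitaries $u_n=\sum_{s\in\Gamma_0}c_n(s)u_s\in B$ with $u_n\to 0$ weakly. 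Expanding in the Fourier basis and applying $\E_B$, only the products $gsh$ landing back in $\Gamma_0$ survive, giving
\[
\|\E_B(u_g u_n u_h)\|_2^{2}=\sum_{s\in S}|c_n(s)|^{2},\qquad S:=\{s\in\Gamma_0:\,gsh\in\Gamma_0\}.
\]
Since $\sum_{s}|c_n(s)|^{2}=1$ while weak nullity gives $c_n(s)\to 0$ for each fixed $s$, this sum tends to $0$ as soon as $S$ is finite.

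The key step, and the place where I expect the main difficulty, is therefore the combinatorial claim that condition (2) forces $S$ to be finite. I would prove this by fixing one element $s_0\in S$ and checking that $s\mapsto g(ss_0^{-1})g^{-1}$ maps $S$ injectively into $g\Gamma_0 g^{-1}\cap\Gamma_0$: for $s,s_0\in S$ both $gsh$ and $gs_0h$ lie in $\Gamma_0$, so $(gsh)(gs_0h)^{-1}=g(ss_0^{-1})g^{-1}\in\Gamma_0$, while trivially $g(ss_0^{-1})g^{-1}\in g\Gamma_0 g^{-1}$; injectivity is clear since $s\mapsto ss_0^{-1}$ and conjugation are injective. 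Hence $|S|\le|g\Gamma_0 g^{-1}\cap\Gamma_0|$, which is finite by (2) because $g\notin\Gamma_0$, completing the implication. I note that this direction uses only the elementary fact that weakly null unitaries have coefficientwise-null Fourier data, so it does not require the full strength of Theorem~\ref{L:mixing to arbitary sequences}.
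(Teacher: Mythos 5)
Your proposal is correct and follows essentially the same route as the paper: $(1)\Rightarrow(4)$ via Theorem~\ref{T:mixing}, the contrapositive of $(3)\Rightarrow(2)$ using the diffuse subalgebra generated by the infinite intersection group, and for $(2)\Rightarrow(1)$ the same combinatorial observation that $g_1h_1g_2(g_1h_2g_2)^{-1}=g_1h_1h_2^{-1}g_1^{-1}\in\Gamma_0\cap g_1\Gamma_0g_1^{-1}$ forces the set $S$ to be finite, followed by the Fourier-coefficient estimate on generators. The only cosmetic difference is that your $\ell^2$ identity $\|\E_B(u_gu_nu_h)\|_2^2=\sum_{s\in S}|c_n(s)|^2$ is a slightly cleaner version of the paper's bound $\sum_{i=1}^N|\alpha_{n,i}|$.
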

\begin{proof}
``$1\Rightarrow 4$" follows from Theorem~\ref{T:mixing} and ``$4\Rightarrow 3$'' is trivial.

``$3\Rightarrow 2$". Suppose $M=L(\Gamma)$ and $B=L(\Gamma_0)$.
Suppose for some $g\in \Gamma\setminus \Gamma_0$,
$g\Gamma_0g^{-1}\cap \Gamma_0$ is an infinite group. Let
$\Gamma_1=\Gamma_0\cap g^{-1}\Gamma_0 g=g^{-1}(g\Gamma_0g^{-1}\cap
\Gamma_0)g$. Then $\Gamma_1$ is an infinite group, and $g\Gamma_1
g^{-1}\subseteq \Gamma_0$. So $\lambda(g)
L(\Gamma_1)\lambda(g^{-1})\subseteq L(\Gamma_0)$. By the
third statement, $\lambda(g)\in L(\Gamma_0)$ and $g\in
\Gamma_0$. This is a contradiction.

 ``$2\Rightarrow 1$". First, we show that if $g_1,g_2\in \Gamma\setminus
 \Gamma_0$, then $g_1\Gamma_0 g_2\cap \Gamma_0$ is a finite set.
 Suppose $h_1,h_2\in \Gamma_0$ and $g_1h_1g_2, g_1h_2g_2 \in
 \Gamma_0$. Then
 \[g_1h_1h_2^{-1}g_1^{-1}=g_1h_1g_2(g_1h_2g_2)^{-1}\in
 \Gamma_0\cap g_1\Gamma_0g_1^{-1}.
 \] Since $\Gamma_0\cap g_1\Gamma_0g_1^{-1}$ is a finite group,
 $\{h_1h_2^{-1}:\, h_1,h_2\in \Gamma_0\,\text{and}\, g_1h_1g_2, g_1h_2g_2 \in
 \Gamma_0\}$ is a finite set. Hence, $g_1\Gamma_0 g_2\cap \Gamma_0$ is a finite
 set.

 Let $\{v_n\}$ be a sequence of unitary operators in $B$ such that
 $\displaystyle\lim_{n\rightarrow\infty}v_n=0$ in the weak
 operator topology. Write $v_n=\sum_{k=1}^\infty \alpha_{n,k}\lambda(h_k)$.
 Then for each $k$, $\lim_{n\rightarrow\infty} \alpha_{n,k}=0$.
 Suppose $g_1,g_2\in \Gamma\setminus\Gamma_0$. There exists an $N$
 such that for all $m\geq N$, $g_1h_mg_2\notin \Gamma_0$. Hence,
 \[
\|\E_B(g_1v_ng_2)\|_2=\sum_{i=1}^N\|\alpha_{n,i}\E_B(g_1\lambda(h_i)g_2)\|_2\leq
\sum_{i=1}^N|\alpha_{n,i}|\rightarrow 0
 \]
when $n\rightarrow \infty$. By Remark~\ref{R:mixing},
 $M$ is mixing relative to $B$.

\end{proof}

We now  apply Theorem~\ref{T:group algebra and stongly mixing} to the group-theoretic situation arising from a semidirect product $\Gamma=G\rtimes \Gamma_0$, where $\Gamma_0$ is an infinite group. Let $\sigma_h(g)=hgh^{-1}$ for $h\in \Gamma_0$ and $g\in G$. Then $\sigma_h$ is an automorphism of $G$. Note that $hg=hgh^{-1}h=\sigma_h(g)h$ for $h\in \Gamma_0$ and $g\in G$.

\begin{Proposition}\label{P:mixing of semidirect product}  Let $M=L(G\rtimes \Gamma_0)$ and $B=L(\Gamma_0)$. Then $B$ is mixing in $M$ if and only if for each $g\in G$, $g\neq e$,  the group
\[
 \{h\in \Gamma_0:\sigma_h(g)=g\}
\]
is finite.
\end{Proposition}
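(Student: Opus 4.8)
The plan is to deduce this directly from Theorem~\ref{T:group algebra and stongly mixing} applied to $\Gamma = G \rtimes \Gamma_0$, by translating the group-theoretic condition that $g\Gamma_0 g^{-1}\cap \Gamma_0$ be finite for every $g \in \Gamma \setminus \Gamma_0$ into the stabilizer condition in the statement. First I would record that, for the action $\sigma$, the set $S(g) = \{h \in \Gamma_0 : \sigma_h(g) = g\}$ is a subgroup of $\Gamma_0$ (it is the stabilizer of $g$ under $\sigma$), since $\sigma_{hh'}(g)=\sigma_h(\sigma_{h'}(g))$ and each $\sigma_h$ is an automorphism; thus requiring $S(g)$ to be finite is the same as requiring it to be a finite group, matching the hypothesis of the Theorem.

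The first reduction is to observe that it suffices to check condition (2) of Theorem~\ref{T:group algebra and stongly mixing} for elements $\gamma \in G \setminus \{e\}$ rather than all of $\Gamma \setminus \Gamma_0$. Writing a general element of $\Gamma$ uniquely as $\gamma = g_0 h_0$ with $g_0 \in G$ and $h_0 \in \Gamma_0$, the requirement $\gamma \notin \Gamma_0$ forces $g_0 \neq e$; and since $h_0 \Gamma_0 h_0^{-1} = \Gamma_0$ we have
\[
\gamma \Gamma_0 \gamma^{-1} = g_0 h_0 \Gamma_0 h_0^{-1} g_0^{-1} = g_0 \Gamma_0 g_0^{-1},
\]
so that $\gamma \Gamma_0 \gamma^{-1} \cap \Gamma_0 = g_0 \Gamma_0 g_0^{-1} \cap \Gamma_0$. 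Hence nothing is lost by taking $\gamma = g_0 \in G \setminus \{e\}$.

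The second, computationally central, step is to identify $g_0 \Gamma_0 g_0^{-1} \cap \Gamma_0$ with $S(g_0)$. Using the defining relation $hg = \sigma_h(g)\,h$ of the semidirect product, for $h \in \Gamma_0$ I would compute
\[
g_0 h g_0^{-1} = g_0\, \sigma_h(g_0^{-1})\, h = \bigl(g_0\, \sigma_h(g_0)^{-1}\bigr) h,
\]
where $g_0\,\sigma_h(g_0)^{-1} \in G$ and $h \in \Gamma_0$. By uniqueness of the factorization $\Gamma = G\cdot\Gamma_0$, this element lies in $\Gamma_0$ if and only if its $G$-component $g_0\,\sigma_h(g_0)^{-1}$ is trivial, i.e.\ $\sigma_h(g_0) = g_0$; and in that case $g_0 h g_0^{-1} = h$. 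Therefore $g_0 \Gamma_0 g_0^{-1} \cap \Gamma_0 = S(g_0)$.

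Combining these, $g \Gamma_0 g^{-1} \cap \Gamma_0$ is finite for every $g \in \Gamma \setminus \Gamma_0$ if and only if $S(g_0)$ is finite for every $g_0 \in G \setminus \{e\}$, and the equivalence with mixing is then immediate from Theorem~\ref{T:group algebra and stongly mixing}. The only real work lies in the semidirect-product bookkeeping of the second step; no analytic input beyond the cited Theorem is required, so there is no serious obstacle here, only the need to carry out the conjugation computation carefully and to invoke uniqueness of the $G\cdot\Gamma_0$ factorization when extracting the condition $\sigma_h(g_0)=g_0$.
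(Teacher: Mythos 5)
Your proposal is correct and follows essentially the same route as the paper: both reduce condition (2) of Theorem~\ref{T:group algebra and stongly mixing} from arbitrary $\gamma=g_0h_0\in\Gamma\setminus\Gamma_0$ to $g_0\in G\setminus\{e\}$ using $h_0\Gamma_0h_0^{-1}=\Gamma_0$, and both identify $g_0\Gamma_0g_0^{-1}\cap\Gamma_0$ with the stabilizer $\{h\in\Gamma_0:\sigma_h(g_0)=g_0\}$ via the relation $hg=\sigma_h(g)h$ and uniqueness of the $G\cdot\Gamma_0$ factorization. The only cosmetic difference is that the paper extracts the $\Gamma_0$-factor on the left (obtaining the condition $\sigma_{h^{-1}}(g)=g$ and then passing to $\sigma_h(g)=g$), whereas you extract the $G$-factor on the left and get $\sigma_h(g_0)=g_0$ directly.
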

\begin{proof}
 Let $g\in G$ and $h\in \Gamma_0$.  Suppose  $h\in g\Gamma_0g^{-1}\cap \Gamma_0$. Then $ghg^{-1}\in \Gamma_0$. Note that $ghg^{-1}=hh^{-1}ghg^{-1}=h(\sigma_{h^{-1}}(g)g^{-1})$. So $ghg^{-1}\in \Gamma_0$ implies that $\sigma_{h^{-1}}(g)g^{-1}\in \Gamma_0\cap G=\{e\}$, i.e., $\sigma_{h^{-1}}(g)=g$ and hence $\sigma_h(g)=g$.
Conversely, suppose $\sigma_h(g)=g$. Then $\sigma_{h^{-1}}(g)=g$ and hence $ghg^{-1}=h\sigma_{h^{-1}}(g)g^{-1}=h\in \Gamma_0\cap g\Gamma_0g^{-1}$. This proves
\[
 \{h\in \Gamma_0: \sigma_h(g)=g\}=\{h\in \Gamma_0: h\in g\Gamma_0g^{-1}\cap \Gamma_0\}
\]

Suppose $B$ is mixing in $M$. By 2 of Theorem~\ref{T:group algebra and stongly mixing}, $g\Gamma_0g^{-1}\cap \Gamma_0$ is a finite group for every $g\in G$ with $g\neq e$. So the group $\{h\in H:\sigma_h(g)=g\}$ is finite.  Conversely, suppose for each $g\in G$, $g\neq e$,  the group
$
 \{h\in \Gamma_0:\sigma_h(g)=g\}
$
is finite, which implies that  $g\Gamma_0g^{-1}\cap \Gamma_0$ is finite.    A group element of $\Gamma\setminus \Gamma_0$ can be written as $gh$, $g\in G$, $g\neq e$, $h\in \Gamma_0$.  Note that
\[
 gh\Gamma_0h^{-1}g^{-1}\cap \Gamma_0=g\Gamma_0 g^{-1}\cap \Gamma_0
\]
is finite. So $B$ is mixing in $M$ by 2 of Theorem~\ref{T:group algebra and stongly mixing}.
\end{proof}

Recall that the action $\sigma$ of a group $H$ on a finite von Neumann algebra $N$ is called \emph{ergodic} if $\sigma_h(x)=x$ for all $h\in H$ implies that $x=\lambda 1$. The following result extends Theorem 2.4 of~\cite{KS} to the noncommutative setting.
\begin{Corollary}
 Let $M=L(G\rtimes \Gamma_0)$ and $B=L(\Gamma_0)$. Suppose $\Gamma_0$ is a finitely generated, infinite, abelian group or $\Gamma_0$ is a torsion free group. Then $B$ is mixing in $M$ if and only if every element $h\in \Gamma_0$ of infinite order is ergodic on $L(G)$.
\end{Corollary}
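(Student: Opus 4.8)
The plan is to deduce the result directly from Proposition~\ref{P:mixing of semidirect product}, which already reduces mixing to a purely group-theoretic statement. Writing $\mathrm{Stab}(g)=\{h\in\Gamma_0:\sigma_h(g)=g\}$ for the stabilizer of $g\in G$ under the action $\sigma$, the proposition asserts that $B$ is mixing in $M$ if and only if $\mathrm{Stab}(g)$ is finite for every $g\in G$ with $g\neq e$. It therefore suffices to show that this finiteness condition is equivalent to ergodicity on $L(G)$ of every infinite-order element of $\Gamma_0$. The link is that, by the definition of ergodicity recalled above, $h\in\Gamma_0$ is ergodic on $L(G)$ precisely when the fixed-point set $\{x\in L(G):\sigma_h(x)=x\}$ consists only of scalars, and that $\sigma_h$ permutes the canonical orthonormal basis $\{\lambda(g):g\in G\}$ of $L^2(L(G))$ by $\lambda(g)\mapsto\lambda(\sigma_h(g))$.

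For the forward implication I would assume all stabilizers finite and fix $h\in\Gamma_0$ of infinite order. Given $x\in L(G)$ with $\sigma_h(x)=x$, I expand $x=\sum_{g\in G}c_g\lambda(g)$ in $L^2$; the fixed-point equation forces $c_{\sigma_h(g)}=c_g$, so the coefficient function is constant along the orbits of $\langle h\rangle$ acting via $\sigma_h$. For $g\neq e$, the set $\{n\in\ZZZ:h^n\in\mathrm{Stab}(g)\}$ is a subgroup of $\ZZZ$ which must be trivial: a nonzero element would place infinitely many distinct powers of $h$ inside the finite set $\mathrm{Stab}(g)$, since $h$ has infinite order. Hence the $\langle h\rangle$-orbit of every $g\neq e$ is infinite, and square-summability of $(c_g)$ forces $c_g=0$ on each such orbit. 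Only $c_e$ survives, so $x=c_e 1$ and $h$ is ergodic. I would note that this direction uses neither standing hypothesis on $\Gamma_0$.

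For the converse I would argue by contraposition: if $B$ is not mixing, then by Proposition~\ref{P:mixing of semidirect product} there is some $g\neq e$ with $\mathrm{Stab}(g)$ infinite, and the goal is to locate an infinite-order $h\in\mathrm{Stab}(g)$. For such an $h$, the element $\lambda(g)\in L(G)$ satisfies $\sigma_h(\lambda(g))=\lambda(g)$ and is not scalar (as $g\neq e$), so $h$ fails to be ergodic, contradicting the hypothesis. Producing an infinite-order element inside the infinite subgroup $\mathrm{Stab}(g)$ is exactly where the two hypotheses enter, and this is the main point to verify. In the torsion-free case any nontrivial element of $\mathrm{Stab}(g)$ already has infinite order. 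In the finitely generated infinite abelian case, $\Gamma_0\cong\ZZZ^r\times F$ with $F$ finite, so the torsion elements of $\mathrm{Stab}(g)$ lie in the finite set $\{0\}\times F$; an infinite subgroup cannot consist solely of torsion, hence contains an element of infinite order. Combining the two implications yields the stated equivalence.
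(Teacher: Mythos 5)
Your proof is correct and follows essentially the same route as the paper: both directions are reduced to the finite-stabilizer criterion of Proposition~\ref{P:mixing of semidirect product}, and the converse is handled by exactly the same contraposition (locating an infinite-order element inside an infinite stabilizer using the two hypotheses on $\Gamma_0$). The only difference is that you spell out the forward implication, which the paper dismisses as ``clear,'' via the Fourier-coefficient and infinite-orbit argument, and that argument is sound.
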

\begin{proof}
 If $B$ is mixing in $M$, then clearly every element $h\in \Gamma_0$ of infinite order is ergodic on $L(G)$.  Now suppose every element $h\in \Gamma_0$ of infinite order is ergodic on $L(G)$.  If $B$ is not mixing in $M$, then there is a $g\in G$, $g\neq e$, such that $\{h\in \Gamma_0:\sigma_h(g)=g\}$ is an infinite group. Under the above hypotheses on $\Gamma_0$, there exits an element $h_0$ of infinite order such that $\sigma_{h_0}(g)=g$. This implies that the action of $h_0$ on $L(G)$ is not ergodic, which is a contradiction.
\end{proof}

\begin{Corollary}\label{C:a generalization of Halmos}
 Let $M=L(G\rtimes \mathbb{Z})$ and $B=L(\mathbb{Z})$. Then the following conditions are equivalent:
\begin{enumerate}
 \item the action of $\mathbb{Z}$ on $L(G)$ is mixing, i.e.,  $B$ is mixing in $M$;
\item the action of $\mathbb{Z}$ on $L(G)$ is weakly mixing, i.e.,  $B$ is weakly mixing in $M$;
\item the action of $\mathbb{Z}$ on $L(G)$ is ergodic;
\item for every $g\in G$, $g\neq e$, the orbit $\{\sigma_h(g)\}$ is infinite;
\item for every $g\in G$, $g\neq e$, $\{h\in \mathbb{Z}: \sigma_h(g)=g\}=\{e\}$.
\end{enumerate}
\end{Corollary}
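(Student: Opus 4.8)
The plan is to establish the cycle $(1)\Rightarrow(2)\Rightarrow(3)\Rightarrow(5)\Rightarrow(1)$ and, separately, $(4)\Leftrightarrow(5)$, tying the two group-theoretic conditions to the operator-algebraic ones through Proposition~\ref{P:mixing of semidirect product}. The structural fact that makes the equivalences collapse is that every nontrivial subgroup of $\mathbb{Z}$ is infinite. Writing $S_g=\{h\in\mathbb{Z}:\sigma_h(g)=g\}$ for the stabilizer of $g\in G$, this subgroup of $\mathbb{Z}$ is finite if and only if $S_g=\{e\}$; since Proposition~\ref{P:mixing of semidirect product} says that $(1)$ holds exactly when $S_g$ is finite for every $g\neq e$, we obtain $(1)\Leftrightarrow(5)$ at once, and in particular $(5)\Rightarrow(1)$ closes the cycle. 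The equivalence $(4)\Leftrightarrow(5)$ is the orbit--stabilizer relation: the orbit $\{\sigma_h(g)\}$ has cardinality $[\mathbb{Z}:S_g]$, which is infinite precisely when $S_g=\{e\}$.

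For $(1)\Rightarrow(2)$ I would exhibit a single witnessing sequence for weak mixing. As $B=L(\mathbb{Z})$ is diffuse, a generator $t$ of $\mathbb{Z}$ gives a Haar unitary $\lambda(t)$ with $\lambda(t)^n\to 0$ in the weak operator topology; taking $u_n=\lambda(t)^n$ in Definition~\ref{D:mixing}, the mixing hypothesis forces the asymptotic homomorphism identity along this particular sequence, which is exactly what weak mixing demands.

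The analytic heart is $(2)\Rightarrow(3)$, which I would prove by contraposition. If the $\mathbb{Z}$-action on $L(G)$ is not ergodic, choose $x\in L(G)$ with $\sigma_h(x)=x$ for all $h$ and $x\notin\mathbb{C}1$; replacing $x$ by $x-\tau(x)1$ we may assume $\tau(x)=0$, so that $\E_B(x)=0$ because $x$ is then supported on $G\setminus\{e\}$, which is disjoint from $\mathbb{Z}$. The key observation is that a fixed point lies in the relative commutant: $\sigma_h(x)=\lambda(h)x\lambda(h)^{-1}=x$ for all $h$ says $x\in B'\cap M$. Hence for the unitaries $u_n\in B$ provided by weak mixing we have $xu_nx^*=u_nxx^*$ and therefore $\E_B(xu_nx^*)=u_n\E_B(xx^*)$, whose trace norm equals the constant $\|\E_B(xx^*)\|_2$, which is strictly positive since $\E_B$ is faithful and $x\neq 0$. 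This contradicts $\|\E_B(xu_nx^*)-\E_B(x)u_n\E_B(x^*)\|_2\to 0$, so weak mixing forces ergodicity.

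Finally, $(3)\Rightarrow(5)$ is again contrapositive and constructive. If $S_g\neq\{e\}$ for some $g\neq e$, then $S_g=n\mathbb{Z}$ with $n\geq 1$ minimal, so the orbit $O=\{g,\sigma_1(g),\ldots,\sigma_{n-1}(g)\}\subseteq G$ is finite and consists of distinct elements, none equal to $e$. The element $x=\sum_{w\in O}\lambda(w)$ then lies in $L(G)$, is fixed by every $\sigma_h$ (which merely permutes $O$), and is not a scalar, so the action fails to be ergodic. I expect the main obstacle to be $(2)\Rightarrow(3)$: the decisive point is recognizing that fixed vectors of the action lie in $B'\cap M$ and have $\E_B$-image zero after centering, which is precisely what lets the weak-mixing identity be evaluated in closed form and produce a contradiction. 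The remaining implications are either definitional or elementary group theory.
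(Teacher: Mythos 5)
Your proof is correct and follows essentially the same route as the paper: the same cycle closed by Proposition~\ref{P:mixing of semidirect product}, and the same construction of a non-scalar fixed element as a sum over a finite orbit. The only differences are cosmetic — you route $(4)$ through a side equivalence with $(5)$ rather than placing it in the chain, and you supply explicit details (the relative-commutant argument for $(2)\Rightarrow(3)$) for implications the paper dismisses as clear.
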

\begin{proof}
Let $\gamma$ be a generator of $\mathbb{Z}$.  Clearly ``$1\Rightarrow 2\Rightarrow 3$''.

``$3\Rightarrow 4$''. Suppose $\sigma_{\gamma^n}(g)=g$ and $n$ is the minimal positive integer satisfies this condition. Let $x=L_g+L_{\sigma_{\gamma}(g)}+\ldots+L_{\sigma_{\gamma^{n-1}}(g)}$. Then $x\in L(G)$, $x\neq \lambda 1$, and $\sigma_h(x)=x$ for all $h\in \mathbb{Z}$. This implies that the action of $\mathbb{Z}$ on $L(G)$ is not ergodic.

``$4\Rightarrow 5$''. Suppose $\sigma_{\gamma^n}(g)=g$ for some positive integer $n$. Then the orbit $\{\sigma_h(g)\}$ has at most $n$ elements.

``$5\Rightarrow 1$'' follows from Proposition~\ref{P:mixing of semidirect product}.
\end{proof}

A special case of Corollary~\ref{C:a generalization of Halmos} implies the following classical result of Halmos~\cite{Ha}.
\begin{Corollary}[Halmos's Theorem]
 Let $X$ be a compact abelian group, and $T:X\rightarrow X$ a continuous automorphism. Then $T$ is mixing if and only if $T$ is ergodic.
\end{Corollary}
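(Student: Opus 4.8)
The plan is to deduce the statement from Corollary~\ref{C:a generalization of Halmos} by means of Pontryagin duality, so that the dynamical system $(X,T)$ is translated into the group-von-Neumann-algebra picture of that corollary. No new hard analysis is needed; everything will follow once the dictionary between the two settings is set up correctly.

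First I would pass to the dual. Assuming $X$ is metrizable (so that strong mixing and ergodicity are the intended classical notions and the dual is countable), let $G=\widehat{X}$ be the Pontryagin dual, a countable discrete abelian group. Fourier duality gives a trace-preserving isomorphism $L^\infty(X,\mu)\cong L(G)$, where $\mu$ is normalized Haar measure, the character $\chi_g$ corresponds to $\lambda(g)$, and $\int_X\,d\mu$ becomes the canonical trace $\tau$ on $L(G)$. A continuous automorphism $T$ of $X$ dualizes to an automorphism $\widehat{T}$ of $G$, and the induced $*$-automorphism $f\mapsto f\circ T$ of $L^\infty(X)$ becomes $\lambda(g)\mapsto \lambda(\widehat{T}(g))$. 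Thus the $\mathbb Z$-action generated by $T$ is carried to the action $\sigma$ of $\mathbb Z$ on $L(G)$ with $\sigma_\gamma=\widehat{T}$ for a generator $\gamma$, and the crossed product $L^\infty(X,\mu)\rtimes_T\mathbb Z$ is identified with $M=L(G\rtimes \mathbb Z)$, with $B=L(\mathbb Z)$.

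Next I would verify that this dictionary respects the dynamical notions. Ergodicity of $T$ means that the only $T$-invariant elements of $L^\infty(X)$ are scalars, which under the identification is precisely ergodicity of the $\mathbb Z$-action on $L(G)$, i.e. condition $(3)$ of Corollary~\ref{C:a generalization of Halmos}. Likewise, $T$ is mixing exactly when the measure-preserving $\mathbb Z$-action on $L^\infty(X)=L(G)$ is mixing, which is condition $(1)$. The equivalence $(1)\Leftrightarrow(3)$ of Corollary~\ref{C:a generalization of Halmos} then reads: the $\mathbb Z$-action on $L(G)$ is mixing if and only if it is ergodic. Translating back through the isomorphism yields that $T$ is mixing if and only if $T$ is ergodic, which is the assertion.

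The only real work lies in the duality dictionary itself: verifying that $L^\infty(X,\mu)\cong L(\widehat X)$ trace-preservingly, that $T$ corresponds to the dual automorphism $\widehat T$, and---most importantly---that classical strong mixing and ergodicity of $T$ coincide with the operator-algebraic notions of mixing and ergodicity of the action on $L(G)$ appearing in Corollary~\ref{C:a generalization of Halmos}. These matchings are standard, but they are where the content sits, since once the translation is in place the theorem is immediate from $(1)\Leftrightarrow(3)$. I would also record the standing metrizability hypothesis, which makes $\widehat X$ countable and places the situation within the separable framework under which the preceding results were established.
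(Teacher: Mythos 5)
Your proposal is correct and follows essentially the same route as the paper: pass to the Pontryagin dual $G=\widehat{X}$, identify the system with the $\mathbb{Z}$-action on $L(G)$ inside $L(G\rtimes\mathbb{Z})$, and invoke the equivalence $(1)\Leftrightarrow(3)$ of Corollary~\ref{C:a generalization of Halmos}. Your write-up merely spells out the duality dictionary (and the metrizability caveat) more explicitly than the paper does.
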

\begin{proof}
 By the Pontryagin duality theorem, the dual group $G$ of $X$ is a discrete abelian group. Furthermore, there is an induced action of $\mathbb{Z}$ on $G$, and the action is unitarily conjugate to the action of $T$ on $X$. Now the corollary follows from Corollary~\ref{C:a generalization of Halmos}.
\end{proof}

\section{Relative weak mixing}  \label{section:weak mixing}

Suppose $M$ is a finite von Neumann algebra with a faithful normal trace $\tau$, and $A$, $B$ are von Neumann subalgebras of $M$.  We say $B\subset M$ is \emph{weakly mixing relative to $A$} if there exits a sequence of unitary operators $u_n\in A$ such that
\[
\lim_{n\rightarrow\infty}\|\E_B(xu_ny)-\E_B(x)u_n\E_B(y)\|_2=0,\quad \forall x,y\in M.
\]
So $B$ is weakly mixing in $M$ if and only if $B\subset M$ is weakly mixing relative to $B$. Since every diffuse von Neumann algebra contains a sequence of unitary operators converging to 0 in the weak operator topology, $B$ is mixing in $M$ implies that $B\subset M$ is weakly mixing relative to $A$ for all diffuse von Neumann subalgebras $A$ of $B$.

It is easy to see that $B\subset M$ is weakly mixing relative to $A$ if and only if there exits a sequence of unitary operators $u_n\in A$ such that for all elements $x,y$ in $M$ with
$\E_B(x)=\E_B(y)=0$, one has
\[\lim_{n\rightarrow \infty}\|\E_B(xu_ny)\|_2=0.
\]

The main result of this section is the following, which is inspired by~\cite{Po3}.

\begin{Theorem}\label{T:main result}
  Let $M$ be a finite von Neumann algebra with a faithful normal trace $\tau$, and let $A$, $B$ be  von Neumann subalgebras of $M$. Then the following conditions are equivalent:
\begin{enumerate}
 \item $B\subset M$ is weakly mixing relative to $A$, i.e., there exists a sequence of unitary operators $\sett{u_k}$ in $A$ such that
\[
 \lim_{k\rightarrow\infty}\|\E_B(xu_ky)\|_2=0,\quad\forall x,y\in N\ominus B;
\]
\item if $z\in A'\cap \langle M,e_B\rangle$ satisfies $\Tr(z^*z)<\infty$, then $e_Bze_B=z$;
\item if $p\in A'\cap \langle M,e_B\rangle$ satisfies $\Tr(p)<\infty$, then $e_Bpe_B=p$;
\item if $x\in M$ satisfies $Ax\subset \sum_{i=1}^nx_i B$ for a finite number of elements $x_1,\ldots,x_n\in M$, then $x\in B$.
\end{enumerate}
\end{Theorem}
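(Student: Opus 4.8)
The plan is to prove the cyclic chain $1\Rightarrow 2\Rightarrow 3\Rightarrow 4\Rightarrow 1$, carrying out all computations inside the basic construction $\langle M,e_B\rangle$ with its trace $\Tr$. The equivalence $2\Leftrightarrow 3$ is routine, so I would dispose of it first: $2\Rightarrow 3$ is immediate, since a projection $p$ satisfies $\Tr(p^*p)=\Tr(p)$. For $3\Rightarrow 2$, given $z\in A'\cap\langle M,e_B\rangle$ with $\Tr(z^*z)<\infty$, I would apply condition 3 to the spectral projections $\chi_{[\eps,\infty)}(z^*z)$ and $\chi_{[\eps,\infty)}(zz^*)$; these lie in $A'\cap\langle M,e_B\rangle$ because $z\in A'$, and they have finite trace bounded by $\eps^{-1}\Tr(z^*z)$. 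Using that $e_Bpe_B=p$ forces $p\le e_B$ for a projection $p$ (since then $\langle p\xi,\xi\rangle=\langle pe_B\xi,e_B\xi\rangle\le\|e_B\xi\|^2$), the supremum of these spectral projections pins the right and left supports of $z$ below $e_B$, whence $z e_B=z=e_B z$ and so $e_Bze_B=z$.

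For $3\Rightarrow 4$, suppose $Ax\subseteq\sum_{i=1}^n x_iB$ and let $p$ be the orthogonal projection of $L^2(M)$ onto the closed subspace $\mathcal{H}=\overline{\mathrm{span}}\{a\hat x b:\,a\in A,\ b\in B\}$. Since $\mathcal{H}$ is right $B$-invariant, $p\in\langle M,e_B\rangle$; since $\mathcal{H}\subseteq\sum_{i=1}^n\overline{\hat x_iB}$ is contained in a finite sum of cyclic right $B$-modules, $p$ is dominated by $n$ projections each of the form $s_\ell(x_ie_B)$, and as $\Tr(s_\ell(x_ie_B))=\Tr(s_r(x_ie_B))=\tau(\mathrm{supp}\,\E_B(x_i^*x_i))\le 1$, we get $\Tr(p)\le n<\infty$; finally, invariance of $\mathcal{H}$ under left multiplication by $A$ gives $p\in A'$. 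Condition 3 then yields $e_Bpe_B=p$, i.e. $p\le e_B$, so $\hat x\in\mathcal{H}\subseteq L^2(B)$ and therefore $x\in B$.

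The step $1\Rightarrow 2$ is the first analytic core. Viewing finite-$\Tr$ elements of $\langle M,e_B\rangle$ through the identification with $L^2(M)\otimes_B L^2(M)$, under which $ae_Bb$ corresponds to $\hat a\otimes_B\hat b$ and $\Tr\bigl((ae_Bb)^*(ce_Bd)\bigr)=\tau(b^*\E_B(a^*c)d)$, I would use $z\in A'$, i.e. $u_kzu_k^*=z$, to rewrite $\|z\|_{2,\Tr}^2$ and show that the weak-mixing coefficients $\|\E_B(xu_ky)\|_2$ with $x,y\in M\ominus B$ control the contribution of the off-corner part $z-e_Bze_B$. The delicate point is to arrange that the entries flanking $u_k$ genuinely lie in $M\ominus B$, so that hypothesis 1 is applicable; this is precisely what happens for the tensor legs of $z-e_Bze_B$ that lie in $L^2(M)\ominus L^2(B)$, whereas the pure corner $e_Bze_B$ has both legs in $L^2(B)$ and is unaffected. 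Passing to the limit then forces $z-e_Bze_B=0$.

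Finally I would prove $4\Rightarrow 1$ contrapositively. Assuming $B\subset M$ is not weakly mixing relative to $A$, I would run a mean-ergodic argument over the unitary group $\mathcal{U}(A)$, taking weak-$*$ cluster points of the averages $\frac1n\sum_{k}u_ke_Bu_k^*$, to manufacture a nonzero element of $A'\cap\langle M,e_B\rangle$ of finite trace whose corner compression $e_B(\cdot)e_B$ is not the identity, and hence (via its spectral projections, exactly as in $3\Rightarrow 2$) a nonzero finite-trace projection $p\in A'\cap\langle M,e_B\rangle$ with $p\not\le e_B$. From $\mathrm{ran}(p)$, which is $A$-invariant and of finite right $B$-rank, I would extract a bounded vector $\hat x$ with $x\in M\setminus B$ and read off $Ax\subseteq\sum_i x_iB$, contradicting 4. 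I expect this production of a finite-trace invariant projection out of the \emph{failure} of weak mixing to be the main obstacle: it is the genuine converse to the averaging estimate of $1\Rightarrow 2$, and it requires both the compactness/mean-ergodic input over $\mathcal{U}(A)$ and the standard but technical passage from an $L^2$-module statement to the algebraic relation $Ax\subseteq\sum_i x_iB$.
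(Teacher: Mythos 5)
Your proposal follows essentially the same route as the paper: the implications $2\Leftrightarrow 3$ and $3\Rightarrow 4$ are argued identically, your $1\Rightarrow 2$ is the paper's averaging estimate using $u_kzu_k^*=z$, and your composite step $4\Rightarrow 1$ is exactly the paper's two implications ``not $1$ implies not $2$'' (the minimal-norm element of the weakly closed convex hull of $\{uzu^*:u\in\U(A)\}$ with $z=\sum_i x_i^*e_Bx_i$) followed by ``not $3$ implies not $4$'' (cutting the finite right $B$-module by projections in $A'\cap M$ to get bounded generators, via the lemmas of Vaes and Popa), so the cyclic organization changes nothing of substance. The one place where your sketch is genuinely underspecified is the ``delicate point'' you flag in $1\Rightarrow 2$: the tensor legs of $z-e_Bze_B$ live only in $L^2(M)\ominus L^2(B)$, whereas hypothesis $1$ applies to \emph{bounded} elements of $M\ominus B$; the paper closes this by first reducing to a projection $p\le 1-e_B$ and then invoking Popa's orthonormal-basis approximation (Lemma 1.8 of~\cite{Po1}) to write $p\approx\sum_i x_ie_Bx_i^*$ with $x_i\in M\ominus B$ and $\E_B(x_j^*x_i)=\delta_{ij}f_i$, after which the expansion of $\|u_kp_0u_k^*-p_0\|_{2,\Tr}^2$ produces exactly the terms $\|\E_B(x_j^*u_kx_i)\|_2^2$. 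Your argument needs this (or an equivalent bounded approximation) to be complete; with it supplied, the proof goes through as you describe.
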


Before we prove Theorem~\ref{T:main result}, we state some corollaries of the theorem.

\begin{Corollary}\label{C:weak mixing}
 Let $M$ be a finite von Neumann algebra with a faithful normal trace $\tau$, and let $B$ be a  von Neumann subalgebra of $M$.  Then the following conditions are equivalent:
\begin{enumerate}
 \item $B$ is a weakly mixing von Neumann subalgebra of $M$;
\item if $x\in M$ satisfies $Bx\subset \sum_{i=1}^nx_i B$ for a finite number of elements  $x_1,\ldots,x_n\in M$, then $x\in B$.
\end{enumerate}
\end{Corollary}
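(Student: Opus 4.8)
The plan is to derive this corollary immediately from Theorem~\ref{T:main result} by specializing to the case $A = B$. First I would recall the observation recorded just before the statement of Theorem~\ref{T:main result}: since every unitary operator of $B$ belongs to $B$, the defining condition for $B$ to be weakly mixing in $M$ is exactly the defining condition for $B \subset M$ to be weakly mixing relative to $B$. Consequently, condition (1) of the present corollary---that $B$ is a weakly mixing von Neumann subalgebra of $M$---is literally condition (1) of Theorem~\ref{T:main result} with the auxiliary algebra $A$ taken to be $B$ itself.

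Next I would match up the combinatorial condition. Setting $A = B$ in condition (4) of Theorem~\ref{T:main result} turns the hypothesis $Ax \subset \sum_{i=1}^n x_i B$ into $Bx \subset \sum_{i=1}^n x_i B$, with the unchanged conclusion $x \in B$; this is precisely condition (2) of the corollary. Thus conditions (1) and (2) of the corollary are nothing but conditions (1) and (4) of Theorem~\ref{T:main result} in the case $A = B$, and their equivalence is simply an instance of the equivalence $(1)\Leftrightarrow(4)$ already proved there.

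Since the corollary is a direct specialization, I do not expect any genuine obstacle: all of the substantive work---in particular the passage through the finite-trace part of the commutant $A' \cap \langle M, e_B\rangle$ in conditions (2) and (3)---has already been carried out in the proof of Theorem~\ref{T:main result}. The only step requiring a word of justification is the identification of condition (1) of the corollary with condition (1) of the theorem, which rests on the elementary remark that weak mixing of $B$ relative to itself coincides with ordinary weak mixing of $B$ in $M$.
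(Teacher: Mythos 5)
Your proposal is correct and matches the paper's intent exactly: the paper presents this as an immediate corollary of Theorem~\ref{T:main result}, obtained by taking $A = B$ and invoking the remark (made when relative weak mixing is defined) that weak mixing of $B\subset M$ relative to $B$ is the same as ordinary weak mixing of $B$ in $M$. The identification of condition (2) with condition (4) of the theorem under this specialization is exactly as you describe.
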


The following corollary gives an operator algebraic characterization of weak mixing actions of countable discrete groups.

\begin{Corollary}
 If $\sigma$ is a measure preserving action of a countable discrete  group $\Gamma_0$ on a finite measure space $(X,\mu)$, then weak mixing of $\sigma$ is equivalent to the following property: if $x\in L^\infty(X,\mu)\rtimes \Gamma_0$ and $xL(\Gamma_0)\subset \sum_{i=1}^n x_iL(\Gamma_0)$ for a finite number of elements  $x_1,\ldots,x_n$ in $L^\infty(X,\mu)\rtimes \Gamma_0$, then $x\in L(\Gamma_0)$.

\end{Corollary}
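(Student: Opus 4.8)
The plan is to obtain this statement as a direct specialization of two results already in hand: the introductory Proposition identifying weak mixing of a group action with weak mixing of the associated subalgebra, and the module-theoretic reformulation of weak mixing in Corollary~\ref{C:weak mixing}. Concretely, set $M=L^\infty(X,\mu)\rtimes\Gamma_0$, equipped with its canonical faithful normal trace, and let $B=L(\Gamma_0)$, a von Neumann subalgebra of the finite von Neumann algebra $M$. The whole argument is a chain of two equivalences, so there is no genuinely new analytic content to produce here; the work is entirely one of correctly transcribing the general statements into the present concrete setting.

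First I would invoke the Proposition stated in the introduction (obtained by adapting the arguments of~\cite{J-S}), which asserts that the action $\sigma$ is weakly mixing in the sense of~\cite{BR} if and only if $B=L(\Gamma_0)$ is a weakly mixing von Neumann subalgebra of $M$ in the sense of our Definition. This reduces the ergodic-theoretic hypothesis to the operator-algebraic notion of weak mixing for the inclusion $B\subseteq M$. Next I would apply Corollary~\ref{C:weak mixing} to this inclusion: that corollary states that $B$ is weakly mixing in $M$ precisely when every $x\in M$ for which the left translate $L(\Gamma_0)\,x$ is contained in a finitely generated right module $\sum_{i=1}^n x_iB$ must already lie in $B$. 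Chaining the two equivalences then yields the asserted characterization of weak mixing of $\sigma$ in terms of the bimodule condition, with $B=L(\Gamma_0)$ and $M=L^\infty(X,\mu)\rtimes\Gamma_0$ throughout.

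Since the result is purely a corollary, I do not expect a substantive obstacle; the only points requiring care are bookkeeping. One must confirm that Corollary~\ref{C:weak mixing} applies verbatim to the concrete pair $L(\Gamma_0)\subset L^\infty(X,\mu)\rtimes\Gamma_0$, which it does as $M$ is finite and $B$ is a von Neumann subalgebra. More importantly, one should read the module condition in the form inherited from Theorem~\ref{T:main result}(4) specialized to $A=B$, namely with the \emph{left} translate $L(\Gamma_0)\,x$ on the smaller side of the inclusion $L(\Gamma_0)\,x\subseteq\sum_{i=1}^n x_iL(\Gamma_0)$; this is the only reading that is non-vacuous and that correctly reflects the left-$A$-module versus finitely-generated-right-$B$-module structure underlying the equivalence. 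With that convention fixed, the corollary follows immediately from the two cited results.
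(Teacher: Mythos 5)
Your proposal is correct and is exactly the paper's (unwritten) argument: the corollary follows by chaining the introductory Proposition with Corollary~\ref{C:weak mixing} applied to $B=L(\Gamma_0)\subset M=L^\infty(X,\mu)\rtimes\Gamma_0$. You are also right to flag that the displayed condition should read $L(\Gamma_0)x\subset\sum_{i=1}^n x_iL(\Gamma_0)$ (as in Theorem~\ref{T:main result}(4) with $A=B$), since the version with $xL(\Gamma_0)$ on the left is vacuously satisfied.
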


\begin{Corollary}\label{C:strong mixing}
 Let $M$ be a finite von Neumann algebra with a faithful normal trace $\tau$, and let $B$ be a mixing von Neumann subalgebra of $M$.  If $A\subset B$ is a diffuse von Neumann subalgebra and $x\in M$ satisfies $Ax\subset \sum_{i=1}^nx_i B$ for a finite number of elements $x_1,\ldots,x_n\in M$, then $x\in B$.
\end{Corollary}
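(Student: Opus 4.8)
The plan is to recognize this statement as an immediate consequence of the machinery already in place, by chaining together the mixing hypothesis on $B$ with the equivalence of conditions (1) and (4) in Theorem~\ref{T:main result} applied to the triple $(A,B,M)$. The crucial observation is that the hypothesis ``$Ax\subset\sum_{i=1}^nx_iB$'' is \emph{verbatim} condition (4) of Theorem~\ref{T:main result}. Hence it suffices to verify condition (1), namely that $B\subset M$ is weakly mixing relative to $A$, and then read off the conclusion $x\in B$ from the theorem.

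To verify relative weak mixing, I would first produce the witnessing sequence of unitaries demanded by the definition. Since $A$ is a diffuse von Neumann subalgebra of $B$, it contains a Haar unitary $w$, and $w^n\to 0$ in the weak operator topology; thus $\{w^n\}$ is a sequence of unitary operators lying in $A\subseteq B$ and converging to $0$ weakly. Next I would feed this sequence directly into the mixing hypothesis on $B$: because $B$ is mixing in $M$ and $\{w^n\}$ is a sequence of unitaries in $B$ tending to $0$ in the weak operator topology, Definition~\ref{D:mixing} yields
\[
\lim_{n\to\infty}\|\E_B(xw^ny)-\E_B(x)w^n\E_B(y)\|_2=0,\qquad\forall x,y\in M.
\]
Since $\{w^n\}\subset A$, this is precisely the assertion that $B\subset M$ is weakly mixing relative to $A$, i.e.\ condition (1) of Theorem~\ref{T:main result}. (This is the implication already recorded in the remark immediately following the definition of relative weak mixing: mixing of $B$ forces relative weak mixing with respect to any diffuse $A\subseteq B$.)

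Finally I would apply the equivalence (1)$\Leftrightarrow$(4) of Theorem~\ref{T:main result} to the triple $(A,B,M)$: condition (4) states exactly that if $x\in M$ satisfies $Ax\subset\sum_{i=1}^nx_iB$ for finitely many $x_1,\dots,x_n\in M$, then $x\in B$, which is the desired conclusion. I expect no genuine obstacle here; the argument is purely a matter of correctly assembling two earlier results, and the only elementary input—that a diffuse subalgebra supplies a weakly null sequence of unitaries—is routine. All of the analytic substance has been absorbed into the proof of Theorem~\ref{T:main result}, so the present statement is essentially a translation of mixing into the bimodule-finiteness language of that theorem.
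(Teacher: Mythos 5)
Your proposal is correct and takes essentially the same route as the paper: the corollary is stated as an immediate consequence of Theorem~\ref{T:main result}, using the observation (already recorded in the paragraph following the definition of relative weak mixing) that mixing of $B$ in $M$ yields weak mixing of $B\subset M$ relative to any diffuse $A\subseteq B$, since such an $A$ supplies a weakly null sequence of unitaries. Chaining this with the equivalence $(1)\Leftrightarrow(4)$ is exactly the intended argument.
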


To prove Theorem~\ref{T:main result}, we need the following lemmas.

 \begin{Lemma}\label{L:approximate projections} Let $p\in \langle M,e_B \rangle$ be a finite projection, $p\leq 1-e_B$, and
$\epsilon>0$. Then there exist
$x_1,\ldots,x_n\in M\ominus B$ such that $\E_B(x_j^*x_i)=\delta_{ij}f_i$,
where $f_i$ is a projection in $B$, and
\[\left\|p-\sum_{i=1}^n x_ie_B x_i^*\right\|_{2,\Tr}<\epsilon.\]
\end{Lemma}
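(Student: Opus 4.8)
The plan is to prove that the finite projections of the special form $\sum_{i=1}^n x_ie_Bx_i^*$, built from a finite $B$-orthonormal system, are $\|\cdot\|_{2,\Tr}$-dense among all finite projections $p\le 1-e_B$. First I would record the structural fact that makes the target well-posed: if $x_1,\dots,x_n\in M\ominus B$ satisfy $\E_B(x_j^*x_i)=\delta_{ij}f_i$ with $f_i\in B$ a projection, then $x_if_i=x_i$ (because $\E_B\bigl((x_i(1-f_i))^*x_i(1-f_i)\bigr)=0$ and $\E_B$ is faithful), each $x_ie_Bx_i^*$ is a projection orthogonal to $e_B$ (since $\E_B(x_i)=0$ forces $x_ie_Bx_i^*e_B=0$), and distinct ones are orthogonal; hence $\sum_i x_ie_Bx_i^*$ is automatically a projection $\le 1-e_B$. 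So the content of the lemma is to approximate $p$ by a projection of this shape.

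Second, I would reduce $p$ into a finite matrix corner over $B$. Using a Pimsner--Popa basis $\{m_j\}_{j\ge1}$ of $M$ over $B$ with $m_1=1$ --- so $m_j\in M\ominus B$ for $j\ge2$ and $1-e_B=\sum_{j\ge2}m_je_Bm_j^*$ --- the partial sums $P_N=\sum_{j=2}^N m_je_Bm_j^*$ are projections of exactly the desired form increasing strongly to $1-e_B$. Since $P_N\uparrow 1-e_B$ and $p$ is $\Tr$-finite with $pe_B=0$, normality of $\Tr$ along the decreasing family $p(1-P_N)p\downarrow pe_Bp=0$ gives $\|(1-P_N)p\|_{2,\Tr}\to0$, whence $\|p-P_NpP_N\|_{2,\Tr}\le 2\|(1-P_N)p\|_{2,\Tr}\to0$; I fix $N$ making this $<\epsilon/3$. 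Via the $B$-orthonormality of the $m_j$, the corner $P_N\langle M,e_B\rangle P_N$ is isomorphic to a reduction of $M_{N-1}(B)$ (with the $m_je_Bm_k^*$ playing the role of matrix units), and under this isomorphism $P_NpP_N$ becomes a positive contraction $\tilde x$ over $B$ that is $\|\cdot\|_{2,\Tr}$-close to a projection.

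Third --- the technical heart --- I would produce the orthonormal system. Replacing $\tilde x$ by its spectral projection $\tilde q=\chi_{(1/2,1]}(\tilde x)$ yields a projection in the corner with $\|\tilde q-p\|_{2,\Tr}<\epsilon/2$, using the standard spectral estimate that a positive contraction close in $\|\cdot\|_{2,\Tr}$ to a projection has its spectral cut close to that projection. It then remains to write $\tilde q$ as $\sum_i\eta_i\eta_i^*$ for a $B$-orthonormal family $\eta_i\in B^{N-1}$ with $\langle\eta_i,\eta_j\rangle_B=\delta_{ij}(\text{projection})$: I apply Gram--Schmidt over $B$ to the (bounded) columns of $\tilde q$, normalizing each residual vector $\zeta$ by $\zeta\,\chi_{[\delta,\infty)}(\E_B(\zeta^*\zeta))\,\E_B(\zeta^*\zeta)^{-1/2}$. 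Pulling back through the corner isomorphism gives $x_i=\sum_j m_j(\eta_i)_j\in M\ominus B$, a genuine finite $B$-orthonormal system, with $\sum_i x_ie_Bx_i^*$ equal to the image of $\sum_i\eta_i\eta_i^*$, which approximates $\tilde q$ and hence $p$.

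The main obstacle is precisely this last orthonormalization over the von Neumann algebra $B$. Unlike the scalar case, the $B$-valued norm $\E_B(\zeta^*\zeta)$ need not be invertible, so exact normalization is impossible; the spectral cut-off $\chi_{[\delta,\infty)}$ is forced, and the error it introduces --- measured in $\Tr$ and controllable as $\delta\to0$ because we work inside a finite corner --- is exactly why the statement is only an $\epsilon$-approximation rather than an identity. Two further points need care: keeping every vector both bounded and genuinely inside $M$ (which is why one orthonormalizes the bounded columns of $\tilde q\in M_{N-1}(B)$ rather than the possibly unbounded $L^2$-vectors one would obtain by applying the projection directly in $L^2(M)$, and why the matrix-corner reduction is used), and checking at each step that orthogonality to $B$, i.e. membership in $M\ominus B$, is preserved.
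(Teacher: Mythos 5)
Your preliminary observations are correct, and your third step (spectral cut of a positive contraction close to a projection, then Gram--Schmidt over $B$ with a spectral cut-off $\chi_{[\delta,\infty)}(\E_B(\zeta^*\zeta))$ inside a finite corner of $M_{N-1}(B)$) is a workable version of the standard argument. The genuine gap is in your second step: you invoke an exact Pimsner--Popa basis $\{m_j\}_{j\ge 1}$ consisting of elements of $M$ with $m_1=1$ and $\sum_{j\ge 2}m_je_Bm_j^*=1-e_B$. For a general inclusion $B\subseteq M$ such a basis of \emph{bounded} elements is not available: the general existence theorem produces orthonormal bases whose members are vectors in $L^2(M)$ viewed as unbounded operators affiliated with $M$ (this is the convention the present paper itself adopts in Lemma~\ref{L:Popa}, which exists precisely to cut such bases down to bounded pieces). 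The obstruction is the very normalization problem you flag later --- $\xi\,\E_B(\xi^*\xi)^{-1/2}$ need not be bounded --- and the cut-off that repairs boundedness destroys the exactness of $\sum_j m_je_Bm_j^*=1$. Thus your step 2 assumes a statement strictly stronger than the lemma being proved (it would give the conclusion with $\epsilon=0$ for the projections $P_N$), so the argument is circular. Nor can you fall back on an $L^2$-basis: the corner identification with a reduction of $M_{N-1}(B)$ still makes sense (each $m_je_B$ is bounded since $\E_B(m_j^*m_j)$ is a projection), but the pulled-back vectors $x_i=\sum_j m_j(\eta_i)_j$ are then only right-$B$-bounded vectors in $L^2(M)$, not elements of $M$, whereas the lemma requires $x_i\in M\ominus B\subseteq M$.

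For comparison, the paper does none of this work: it applies Lemma~1.8 of~\cite{Po1}, which is exactly the approximate statement you are trying to reprove, to the finite projection $q=e_B+p$, obtaining $x_0=1,x_1,\dots,x_n\in M$ with $\E_B(x_j^*x_i)=\delta_{ij}f_i$ and $\left\|q-\sum_{i=0}^nx_ie_Bx_i^*\right\|_{2,\Tr}<\epsilon$. The relations $\E_B(x_i)=\E_B(x_0^*x_i)=0$ for $i\ge 1$ then place each $x_i$ in $M\ominus B$ for free, and discarding the $i=0$ term gives the estimate for $p$. If you want a self-contained proof, you must reproduce the content of Popa's lemma --- a maximal-family (Zorn) argument producing bounded orthonormal systems $\{x_i\}\subset M$ with $\sum_ix_ie_Bx_i^*$ approximating $p$ from below in $\Tr$ --- rather than presuppose an exact bounded basis.
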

\begin{proof}
 Let $q=e_B+p$. Then $q$ is a finite projection in $\langle M,e_B\rangle$. By Lemma 1.8 of~\cite{Po1}, there are $x_0,x_1,\ldots, x_n\in M$, $x_0=1$, such that $\E_B(x_j^*x_i)=\delta_{ij}f_i$ for $0\leq i,j\leq n$ and
\[
 \left\|q-\sum_{i=0}^n x_ie_B x_i^*\right\|_{2,\Tr}<\epsilon.
\]
Clearly,
\[\left\|p-\sum_{i=1}^n x_ie_B x_i^*\right\|_{2,\Tr}<\epsilon.\]
\end{proof}

Suppose that $\H\subset L^2(M)$ is a right $B$-module. We denote by $\L_B(L^2(B),\H)$  the set of bounded right $B$-modular operators from $L^2(B)$ into $\H$. The dimension of $\H$ over $B$ is defined as
\[
 {\rm dim}_B(\H)=\Tr(1),
\]
where $\Tr$ is the unique tracial weight on $B'$ satisfying the following condition
\[
 \Tr(x^*x)=\tau(xx^*),\quad \forall x\in \L_B(L^2(B),\H).
\]
 We say $\H$ is a \emph{finite right $B$ module} if $\Tr(1)<\infty$. For details on finite modules, we refer the reader to appendix A of~\cite{Va}.

Suppose that $\H\subset L^2(M)$ is a right $B$-module. We say that $\H$ is \emph{finitely generated} if there exist finitely many elements $\xi_1,\ldots,\xi_n\in \H$ such that $\H$ is the closure of $\sum_{i=1}^n\xi_i B$. A set $\{\xi_i\}_{i=1}^n$ is called an orthonormal basis of  $\H$ if  $\E_B(\xi_i^*\xi_j)=\delta_{ij} p_i\in B$, $p_i^2=p_i$,  and for every $\xi\in \H$ we have
\[
\xi=\sum_i \xi_i E_B(\xi_i^*\xi).
\] Let $p$ be the orthogonal projection of $L^2(M)$ onto $\H$. Then $p=\sum_{i=1}^n\xi_ie_B\xi_i$, where $\xi_i\in L^2(M)$ is viewed as an unbounded operator affilated with $M$.  Every finitely generated right $B$ module has an orthonormal basis. For finitely generated right $B$ modules, we refer to 1.4.1 of~\cite{Po2}.

The following lemma is proved by Vaes in~\cite{Va} (see Lemma A.1).

\begin{Lemma}\label{L:vaes}
 Suppose $\H$ is a finite right $B$-module. Then there exists a sequence of projections $z_n$ of $Z(B)=B'\cap B$ such that $\lim_{n\rightarrow\infty}z_n=1$ in the strong operator topology and
 $\H z_n$ is unitarily equivalent to the  left $p_nM_{k_n}(B)p_n$ right $B$ bimodule  $p_n(L^2(B)^{(n)})$ for each $n$. In particular, $\H z_n$ is a finitely generated right $B$ module.
\end{Lemma}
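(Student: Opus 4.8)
The plan is to prove the lemma by the standard structure theory of right Hilbert modules over the finite von Neumann algebra $B$, reducing it to a Murray--von Neumann comparison argument for projections in an amplification of $B$, organized through the center-valued trace. First I would represent $\H$ concretely. As $\H$ is a (countably generated) right $B$-module, it embeds as a right-$B$-submodule of the infinite amplification $L^2(B)^{(\infty)}=L^2(B)\otimes\ell^2(\NNN)$, on which $B$ acts diagonally on the right. The von Neumann algebra of bounded right $B$-modular operators on $L^2(B)^{(\infty)}$ is $M_\infty(B)=B\tensor\B(\ell^2(\NNN))$ acting by left multiplication, and its center is $Z(B)\otimes 1\cong Z(B)$. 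Thus $\H=p(L^2(B)^{(\infty)})$ for a projection $p\in M_\infty(B)$, and the hypothesis ${\rm dim}_B(\H)<\infty$ says precisely that $(\tau\otimes\Tr)(p)<\infty$, where $\Tr$ is the canonical semifinite trace on $\B(\ell^2(\NNN))$ and the dimension of $\H$ is computed, as in the definition above, via the trace on the commutant $pM_\infty(B)p$.

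Next I would introduce the center-valued dimension. Let $d$ be the $Z(B)$-valued trace of $p$ in $M_\infty(B)$, a positive element affiliated with $Z(B)$ (a priori lying in the extended positive part of $Z(B)$). The identity ${\rm dim}_B(\H)=\tau(d)$ together with finiteness forces $d$ to be finite $\tau$-almost everywhere. Hence, setting $z_n=\chi_{[0,n]}(d)\in Z(B)$, the spectral projections of $d$, I obtain an increasing sequence of central projections with $z_n\to 1$ in the strong operator topology.

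Finally I would run the comparison argument on each corner. After reduction by $z_n$, the center-valued trace of $pz_n$ is bounded by $n\,z_n\le k_n z_n$ with $k_n=\lceil n\rceil$. By the comparison theorem for the semifinite von Neumann algebra $M_\infty(B)$, applied through the center-valued trace, $pz_n$ is Murray--von Neumann equivalent to a subprojection $p_n$ of $(1_B\otimes e_{k_n})z_n$, i.e.\ to a projection in $z_nM_{k_n}(B)$. The implementing partial isometry is a bounded right-$B$-modular unitary, so it furnishes a right-$B$-module isomorphism $\H z_n\cong p_n(L^2(B)^{(k_n)})$, under which the left action of $p_nM_{k_n}(B)p_n$ is carried across; this is exactly the asserted left $p_nM_{k_n}(B)p_n$, right $B$ bimodule structure, and the finite generation of $\H z_n$ is immediate, since $p_n(L^2(B)^{(k_n)})$ is generated by the (at most $k_n$) columns of $p_n$.

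The main obstacle I anticipate is making the center-valued dimension fully rigorous in the unbounded case: $d$ a priori lies in the extended positive part of $Z(B)$, so one must justify both that its spectral projections $z_n$ genuinely belong to $Z(B)$ and increase strongly to $1$ (using $\tau(d)<\infty\Rightarrow d<\infty$ a.e.), and that the comparison theorem applies after cutting by $z_n$, where the central trace of $pz_n$ has become bounded by $k_n$. Once the cutoff forces everything into finite central trace, the comparison step is standard; the remaining checks that the module maps intertwine the right $B$-action are routine.
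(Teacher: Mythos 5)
The paper does not actually prove this lemma---it simply cites Vaes \cite{Va}, Lemma A.1---and your argument is a correct reconstruction of exactly that standard proof: realize $\H$ as $p\bigl(L^2(B)\otimes\ell^2(\mathbb{N})\bigr)$ with $p$ a finite projection in $B\,\overline{\otimes}\,\mathcal{B}(\ell^2(\mathbb{N}))$, cut by the spectral projections $z_n=\chi_{[0,n]}(d)$ of the center-valued dimension $d$ (finite a.e.\ since $\tau(d)=\dim_B\H<\infty$), and use center-valued-trace comparison to dominate $pz_n$ by $(1\otimes e_{k_n})z_n$. The only detail worth spelling out is that the comparison of the two finite projections should be performed inside a finite corner of the semifinite algebra $B\,\overline{\otimes}\,\mathcal{B}(\ell^2(\mathbb{N}))$ (e.g.\ under $pz_n\vee(1\otimes e_{k_n})z_n$), which is routine.
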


The following lemma is motivated by Lemma 1.4.1 of~\cite{Po2}.
\begin{Lemma}\label{L:Popa}
 Suppose  $\H\subset L^2(M)$ is a left $A$ and finitely generated right $B$ bimodule. Let $p$ denote the orthogonal projection of $L^2(M)$ onto $\H$. Then there exists a sequence of projections $z_n$ in $A'\cap M$  such that $\lim_{n\rightarrow\infty}z_n=1$ in the strong operator topology and for each $n$, there exist a finite number of elements $x_{n,1},\ldots,x_{n,k}\in M$ such that
\[
 z_npz_n(\hat{x})=\sum_{i=1}^k\widehat{x_{n,i}\E_B(x_{n,i}^*x)},\quad \forall x\in M.
\]
\end{Lemma}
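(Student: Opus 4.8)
The plan is to prove Lemma~\ref{L:Popa} by combining the structural decomposition of finite right $B$-modules from Lemma~\ref{L:vaes} with the fact that $\H$ carries a compatible left $A$-action, so that the projection $p$ onto $\H$ lies in $A'\cap\langle M,e_B\rangle$. The key realization is that a left $A$-modular, finitely generated right $B$-module has an orthonormal basis $\{\xi_i\}_{i=1}^k$ (as recalled just before the statement), giving $p=\sum_{i=1}^k\xi_i e_B\xi_i^*$, where the $\xi_i$ are unbounded operators affiliated with $M$. The obstruction to writing $p$ directly in the desired form is precisely the unboundedness of these $\xi_i$: the formula $p(\hat x)=\sum_i\widehat{\xi_i\E_B(\xi_i^*x)}$ is forced by the orthonormal basis expansion, but to land in $M$ we need the $\xi_i$ to be genuine bounded elements $x_{n,i}\in M$. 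This is exactly what cutting down by central projections is designed to fix.

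First I would observe that since $\H$ is a left $A$-module, the orthogonal projection $p$ onto $\H$ commutes with the left action of $A$, hence $p\in A'\cap\langle M,e_B\rangle$; and since $\H$ is finitely generated over $B$, it is in particular a finite right $B$-module, so $p$ is a finite projection in $\langle M,e_B\rangle$. Next I would apply the spectral/polar decomposition of the unbounded basis elements $\xi_i$ to produce an increasing sequence of spectral projections on which the $\xi_i$ become bounded. Concretely, for each $i$ let $\xi_i=v_i|\xi_i|$ and choose spectral projections $q_n^{(i)}=\mathbbm{1}_{[0,n]}(|\xi_i|)$; these lie in $B$ (since $\E_B(\xi_i^*\xi_i)\in B$ controls $|\xi_i|$ on the right), and on the associated cutdowns the products $x_{n,i}:=\xi_i q_n^{(i)}$ are bounded, i.e.\ lie in $M$. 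The point is to package these cutdowns into a single sequence of projections $z_n\in A'\cap M$, increasing strongly to $1$, on which the entire decomposition of $p$ simultaneously becomes a finite sum of bounded terms.

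The cleanest route to the $z_n$ is to invoke Lemma~\ref{L:vaes} directly: it supplies projections in $Z(B)$ on which $\H$ becomes unitarily equivalent to a standard finitely generated module $p_n(L^2(B)^{(n)})$ with bounded basis. I would then need to transport these central projections of $B$ into projections of $A'\cap M$ compatible with the left $A$-action; since the left $A$-structure commutes with the right $B$-structure on $\H$, the relevant projections can be taken in $A'\cap M$ rather than merely in $Z(B)$. On the cutdown $\H z_n$ the basis elements are bounded, giving $x_{n,1},\ldots,x_{n,k}\in M$ with
\[
z_npz_n(\hat x)=\sum_{i=1}^k\widehat{x_{n,i}\E_B(x_{n,i}^*x)},\quad\forall x\in M,
\]
which is the claimed formula. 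The main obstacle, as indicated above, is the bookkeeping that guarantees the $z_n$ can be chosen in $A'\cap M$ (so as to respect the left $A$-module structure) while simultaneously boundedly truncating all the basis vectors; reconciling the central projections furnished by Lemma~\ref{L:vaes} with the left $A$-commutant is the step requiring the most care, and I expect the verification that $z_n\to 1$ strongly while the truncated sum reproduces $z_npz_n$ to follow by a routine approximation argument once the projections are correctly placed.
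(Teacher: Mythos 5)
Your overall shape is right --- take an orthonormal basis $\{\xi_i\}_{i=1}^k$, write $p=\sum_{i=1}^k\xi_ie_B\xi_i^*$ with the $\xi_i$ unbounded operators affiliated with $M$, and truncate to make them bounded --- but the proposal has a genuine gap exactly at the point you yourself flag as ``requiring the most care'': you never produce the projections $z_n$ in $A'\cap M$. Your first suggestion, cutting each $\xi_i$ on the \emph{right} by spectral projections of $|\xi_i|$, fails twice over: $|\xi_i|=(\xi_i^*\xi_i)^{1/2}$ is affiliated with $M$, not with $B$ (the orthonormal basis condition only says $\E_B(\xi_i^*\xi_i)$ is a projection in $B$, not that $\xi_i^*\xi_i$ lies in $B$), and in any case a right truncation produces $\xi_iqe_Bq\xi_i^*$, which is not of the form $z_npz_n$ for a projection $z_n\in A'\cap M$ acting on the left. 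Your second suggestion, invoking Lemma~\ref{L:vaes}, is both unnecessary and unhelpful here: the module is already assumed finitely generated (Vaes's lemma is used in the paper only in the step ``$4\Rightarrow3$'' of Theorem~\ref{T:main result} to \emph{reduce} to the finitely generated case), and it furnishes projections in $Z(B)$, which have no reason to lie in $A'\cap M$ since no containment between $A$ and $B$ is assumed; the assertion that ``the relevant projections can be taken in $A'\cap M$'' is precisely the claim that needs proof.

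The missing idea in the paper's argument is the pull-down map. Since $\H$ is a left $A$-module, $p=\sum_i\xi_ie_B\xi_i^*$ commutes with $A$; applying the pull-down map to the identity $a\left(\sum_i\xi_ie_B\xi_i^*\right)=\left(\sum_i\xi_ie_B\xi_i^*\right)a$ yields
\[
a\left(\sum_{i=1}^k\xi_i\xi_i^*\right)=\left(\sum_{i=1}^k\xi_i\xi_i^*\right)a ,
\]
so the (densely defined, positive) operator $T=\sum_i\xi_i\xi_i^*$ is affiliated with $A'\cap M$. One then takes $z_n$ to be the spectral projections of $T$ for $[0,n]$: these lie in $A'\cap M$, increase to $1$ strongly, and make $z_nTz_n$ bounded, hence each $x_{n,i}:=z_n\xi_i$ bounded simultaneously (since $z_n\xi_i\xi_i^*z_n\leq z_nTz_n$). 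The formula $z_npz_n(\hat x)=\sum_i\widehat{x_{n,i}\E_B(x_{n,i}^*x)}$ is then immediate. Note also that the truncation must be on the left, by spectral projections of $\sum_i\xi_i\xi_i^*$ (not of the individual $|\xi_i|=(\xi_i^*\xi_i)^{1/2}$), both to stay in $A'\cap M$ and to recover $z_npz_n$.
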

\begin{proof}
Let $\{\xi_i\}_{i=1}^k\subset \H\subset L^2(M,\tau)$ be an orthonormal basis for $\H$, i.e., $\H=\oplus_{i=1}^k [\xi_iB]$.  Then $p=\sum_{i=1}^k\xi_ie_B\xi_i^*\in A'\cap \langle M,e_B\rangle$, where $\xi_i\in L^2(M)$ is viewed as an unbounded operator affilated with $M$. For $a\in A$, we have
\[
 a\left(\sum_{i=1}^n\xi_ie_B\xi_i^*\right)=\left(\sum_{i=1}^n\xi_ie_B\xi_i^*\right)a.
\]
Applying the pull down map to both sides, we obtain
\[
 a\left(\sum_{i=1}^n\xi_i\xi_i^*\right)=\left(\sum_{i=1}^n\xi_i\xi_i^*\right)a.
\]
Hence $aq=qa$ for all spectral projections $q$ of $\xi_i\xi_i^*$. Since $\sum_{i=1}^n\xi_i\xi_i^*$ is a densely defined operator affilated with $M$, $q\in A'\cap M$.
  We thus obtain a sequence of projections $z_n\in A'\cap M$ such that $\lim_{n\rightarrow \infty}z_n=1$ in the strong operator topology and $\sum_{i=1}^kz_n\xi_i\xi_i^*z_n$ is a bounded operator for each $n$. Let $x_{n,i}=z_n\xi_i$, $1\leq i\leq k$. Then $x_{n,i}\in M$ and
\[
 z_npz_n(\hat{x})=\sum_{i=1}^kz_n\xi_ie_B\xi_i^*z_n(\hat{x})=\sum_{i=1}^kx_{n,i}e_Bx_{n,i}^*(\hat{x})=\sum_{i=1}^k\widehat{x_{n,i}\E_B(x_{n,i}^*x)},\quad \forall x\in M.
\]
\end{proof}

\begin{proof}[Proof of Theorem~\ref{T:main result}]
``$1\Rightarrow 2$''.
 Suppose $e_Bze_B=z$ is not true. We may assume that $(1-e_B)z\neq 0$ (otherwise, consider $z(1-e_B)$). Replacing $z$ by a nonzero spectral projection of $(1-e_B)zz^*(1-e_B)$ corresponding to an interval $[c,1]$ with $c>0$, we may assume that $z=p\neq 0$ is a subprojection of $1-e_B$.

Let $\epsilon>0$. By Lemma~\ref{L:approximate projections}, there is a natural number $n$ and
$x_1,\ldots,x_n\in M\ominus B$ such that $\E_B(x_j^*x_i)=\delta_{ij}f_i$,
where $f_i$ is a projection in $B$, and
\[\|p-\sum_{i=1}^n x_ie_B x_i^*\|_{2,\Tr}<\epsilon/2.\]
Let $p_0=\sum_{i=1}^n x_ie_B x_i^*$. Then $p_0$ is a projection. Note that $u_kpu_k^*=p$. So
\[
 \|u_kp_0u_k^*-p_0\|_{2,\Tr}\leq \|u_k(p_0-p)u_k^*\|_{2,\Tr}+\|p_0-p\|_{2,\Tr}<\epsilon.
\]
Therefore,
\[
 2\|p_0\|_{2,\Tr}^2=\|u_kp_0u_k^*-p_0\|_{2,\Tr}^2+2\Tr(u_kp_0u_k^*p_0)
\]
\[
 =\|u_kp_0u_k^*-p_0\|_{2,\Tr}^2+2\sum_{1\leq i,j\leq n} \Tr(u_kx_ie_Bx_i^*u_k^*x_je_Bx_j^*)
\]
\[
 \leq \epsilon^2+2\sum_{1\leq i,j\leq n}\tau(\E_B(x_i^*u_k^*x_j)x_j^*u_kx_i)
\]
\[
 \leq \epsilon^2+2\sum_{1\leq i,j\leq n}\|\E_B(x_j^*u_kx_i)\|_{2,\tau}^2.
\]
By the assumption of the lemma, $2\sum_{1\leq i,j\leq n}\|\E_B(x_j^*u_kx_i)\|_{2,\tau}^2\rightarrow 0$ when $k \rightarrow \infty$. Hence, $\|p_0\|_{2,\Tr}\leq \epsilon$.  Since $\epsilon>0$ was arbitrary, this says $p=0$. This is a contradiction.

 ``$2\Rightarrow 1$''. Suppose 1 is false. Then there exists an $\epsilon_0>0$ and $x_1,\ldots,x_n\in N\ominus B$ such that $\sum_{1\leq i,j\leq n}\|\E_B(x_iux_j^*)\|_{2,\tau}^2\geq \epsilon_0$ for all $u\in \U(A)$. Let $z=\sum_{i=1}^n x_i^*e_Bx_i$. Then $z\perp e_B$, $\Tr(z)<\infty$, and
\[
 \Tr(zuzu^*)=\sum_{i,j=1}^n\Tr(x_i^*e_Bx_iux_j^*e_Bx_ju^*)=\sum_{i,j=1}^n\Tr(E_B(x_iux_j^*)e_Bx_ju^*x_i^*)
\]
\[
 =\sum_{i,j=1}^n\tau(E_B(x_iux_j^*)x_ju^*x_i^*)=\sum_{i,j=1}^n\|E_B(x_iux_j^*)\|_2^2\geq \epsilon,\quad\forall u\in \U(A).
\]
Consider $\Gamma_z$, the weak operator closure of the convex hull of $\sett{uzu^*:u\in \U(A)}$. Then there exists a unique element $y\in \Gamma_z$ such that $\|y\|_{2,\Tr}=\min\{\|x\|_{2,\Tr}:\,x\in \Gamma_z\}$. The uniqueness implies that $uyu^*=y$ for all $u\in \U(A)$ and hence $y\in A'\cap \langle N,e_B\rangle$.  Since $\Tr(zuzu^*)\geq \epsilon_0$, $\Tr(zy)\geq \epsilon_0>0$. So $y>0$ and $y\perp e_B$.  Note that
\[
 \Tr(y^2)\leq \|y\|\Tr(y)\leq \|y\|\Tr(z)<\infty.
\]
This contradicts the assumption of 2.

``$2\Leftrightarrow 3$'' is easy to see.

``$3\Rightarrow 4$''. Suppose $Ax\subset \sum_{i=1}^nx_iB$. Let $\H$ be the closure of $\widehat{AxB}$ in $L^2(N,\tau)$. Then $\H$ is a left $A$  finitely generated right $B$ bimodule. Let $p$ be the projection of $L^2(N,\tau)$ onto $\H$. Then $p\in A'\cap \langle N,e_B\rangle$ is a finite projection of $\langle N,e_B\rangle$. By the assumption of 3, $p\leq e_B$. So $\hat{x}=p(\hat{x})=e_B(\hat{x})\in \hat{B}$ and $x\in B$.

``$4\Rightarrow 3$''.  Suppose $p\in A'\cap \langle M,e_B\rangle$ satisfies $\Tr(z^*z)<\infty$. Then $\H=pL^2(M)$ is a left $A$ finite right $B$ bimodule.  By Lemma~\ref{L:vaes}, we may assume that $\H$ is a left $A$  finitely generated right $B$ bimodule. By Lemma~\ref{L:Popa}, there exists a sequence of   projections $z_n$ in $A'\cap M$  such that $\lim_{n\rightarrow\infty}z_n=1$ in the strong operator topology and for each $n$, there exist $x_{n,1},\ldots,x_{n,k}\in M$ such that
\[
 z_npz_n(\hat{x})=\sum_{i=1}^k\widehat{x_{n,i}\E_B(x_{n,i}^*x)},\quad \text{for all } x\in M.
\]
  Note that $z_npz_n\in A'\cap \langle M,e_N\rangle$,
and for every $x\in M$,
\[
 A\left(z_npz_n(\hat{x})\right)=(z_npz_n)(\widehat{Ax})\subset\sum_{i=1}^n\widehat{x_{n,i}B}.
\]
 By the assumption of 4, $z_npz_n(\hat{x})\in \hat{B}\subset L^2(B)$ for every $x\in M$. Hence, for each $\xi\in L^2(M)$, $z_npz_n(\xi)\in L^2(B)$. Since $\lim_{n\rightarrow\infty}z_n=1$ in the strong operator topology, $p(\xi)=\lim_{n\rightarrow\infty} z_npz_n(\xi)\in L^2(B)$, i.e.,  $p\leq e_B$.

\end{proof}

\section{Further results and examples}  \label{section:subalgebras}
In this section, we explore the hereditary properties of mixing subalgebras of finite von Neumann algebras; that is, we show that if $B \subset M$ is a mixing inclusion, then the properties of an inclusion $B_1 \subset B$ can force certain mixing properties on the inclusion $B_1 \subset M$. In particular, Proposition \ref{P:subalgebras of mixing} below allows us to construct examples of weakly mixing subalgebras which are not mixing.
We also use the crossed product and amalgamated free product constructions to produce further examples of mixing inclusions.

\subsection{Hereditary properties of mixing algebras}
\begin{Proposition}\label{P:subalgebras of mixing} Let $B$ be a mixing von
Neumann subalgebra of $M$, and let $B_1$ be a diffuse von Neumann
subalgebra of $B$. We have the following
\begin{enumerate}
\item $B_1'\cap M=B_1'\cap B$;

\item if $B_1$ is singular in $B$, then $B_1$ is singular in $M$;

\item $\N_M(B_1)''\subseteq B$, where $\N_M(B_1)=\{u\in \U(M):\,uB_1u^*=B_1\}$;

\item if $B_1$ is weakly mixing in $B$, then $B_1$ is weakly
mixing in $M$;

\item if $B_1$ is mixing in $B$, then $B_1$ is
mixing in $M$;
\end{enumerate}
\end{Proposition}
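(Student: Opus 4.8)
The plan is to obtain the three structural statements (1)--(3) as direct consequences of the strong singularity result Theorem~\ref{T:mixing}, to deduce the mixing statement (5) from the bounded-sequence reformulation of mixing in Theorem~\ref{L:mixing to arbitary sequences}, and to deduce the weak-mixing statement (4) from the bimodule characterization in Corollary~\ref{C:weak mixing}, bridged to the hypothesis that $B$ is mixing in $M$ via Corollary~\ref{C:strong mixing}.

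For (1)--(3) the key point is that $A=B_1$ is diffuse and sits inside the mixing subalgebra $B$, so Theorem~\ref{T:mixing} applies: any $y\in M$ with $yB_1y^*\subseteq B$ already lies in $B$. If $u\in\N_M(B_1)$ then $uB_1u^*=B_1\subseteq B$, hence $u\in B$; thus $\N_M(B_1)\subseteq\U(B)$ and therefore $\N_M(B_1)''\subseteq B$, which is (3). Applying the same observation to a unitary $u\in B_1'\cap M$ (which satisfies $uB_1u^*=B_1$) shows that every unitary of $B_1'\cap M$ lies in $B$; since a von Neumann algebra is the weak-operator closed span of its unitaries, $B_1'\cap M\subseteq B$, giving (1). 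For (2), if $B_1$ is singular in $B$ and $u\in\N_M(B_1)$, then by the above $u\in B$, so $u\in\N_B(B_1)=\U(B_1)$; hence $\N_M(B_1)\subseteq\U(B_1)$ and $B_1$ is singular in $M$.

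For (5) I would argue with an arbitrary sequence of unitaries $u_n\in B_1$ converging to $0$ in the weak operator topology, and $x,y\in M$ with $\E_{B_1}(x)=\E_{B_1}(y)=0$, aiming to show $\|\E_{B_1}(xu_ny)\|_2\to 0$. Writing $x=\E_B(x)+x_0$ and $y=\E_B(y)+y_0$ with $x_0,y_0\in M\ominus B$ and expanding, $\E_{B_1}(xu_ny)$ splits into four terms. Using the tower property $\E_{B_1}=\E_{B_1}\E_B$ and the $B$-bimodularity of $\E_B$, the two mixed terms $\E_{B_1}(\E_B(x)u_ny_0)$ and $\E_{B_1}(x_0u_n\E_B(y))$ vanish identically because $\E_B(x_0)=\E_B(y_0)=0$. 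The term $\E_{B_1}(\E_B(x)u_n\E_B(y))$ tends to $0$ by mixing of $B_1$ in $B$ applied to $\E_B(x),\E_B(y)\in B$, using $\E_{B_1}(\E_B(x))=\E_{B_1}(x)=0$. Finally $\|\E_{B_1}(x_0u_ny_0)\|_2\le\|\E_B(x_0u_ny_0)\|_2\to 0$ by Theorem~\ref{L:mixing to arbitary sequences} (the bounded sequence $u_n\in B$ tends to $0$ weakly and $x_0,y_0\in M\ominus B$). This establishes (5).

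For (4) the naive attempt is to repeat the decomposition of (5) with a weak-mixing sequence for $B_1\subset B$, and here lies the main obstacle: such a sequence need not converge to $0$ in the weak operator topology (e.g.\ $u_n\equiv 1$ when $B_1=B$), so Theorem~\ref{L:mixing to arbitary sequences} cannot be invoked on the term $\E_{B_1}(x_0u_ny_0)$. I would therefore instead use the bimodule characterization of Corollary~\ref{C:weak mixing}: suppose $x\in M$ satisfies $B_1x\subseteq\sum_{i=1}^n x_iB_1$. Since $B_1$ is diffuse and $B$ is mixing in $M$, Corollary~\ref{C:strong mixing} (with $A=B_1$) applied to $B_1x\subseteq\sum_i x_iB$ forces $x\in B$. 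Now comes the crucial maneuver: for each $b\in B_1$ write $bx=\sum_i x_ic_i$ with $c_i\in B_1$, and apply $\E_B$; since $bx\in B$ and $c_i\in B_1\subseteq B$, bimodularity gives $bx=\sum_i\E_B(x_i)c_i$, so $B_1x\subseteq\sum_i\E_B(x_i)B_1$ with generators $\E_B(x_i)\in B$. As $x\in B$ and $B_1$ is weakly mixing in $B$, Corollary~\ref{C:weak mixing} for the inclusion $B_1\subseteq B$ yields $x\in B_1$. Thus the characterizing condition holds for $B_1\subseteq M$, proving (4). The principal difficulty is exactly this bridge: one cannot transport a weak-mixing sequence directly, so the argument must pass through the module picture and exploit that $\E_B$ pushes the generating set from $M$ into $B$ once $x$ has been localized into $B$.
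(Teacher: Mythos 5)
Your proposal is correct and follows essentially the same route as the paper: items (1)--(3) are the intended consequences of Theorem~\ref{T:mixing}, item (4) is exactly the paper's argument (localize $x$ into $B$ via Corollary~\ref{C:strong mixing}, push the generators into $B$ with $\E_B$, then apply Corollary~\ref{C:weak mixing} for $B_1\subseteq B$), and item (5) rests on the same two ingredients as the paper's proof. The only cosmetic difference is in (5), where you decompose $x,y$ orthogonally against $B$ before estimating, while the paper applies $\E_{B_1}$ directly to the mixing estimate for $B\subseteq M$; the computations are equivalent.
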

\begin{proof}

1,2,3 follow  from Theorem~\ref{T:mixing}.

4.\, By Corollary~\ref{C:weak mixing}, we need to show that if $x\in M$ satisfies $B_1x\subset\sum_{i=1}^nx_iB_1$ for a finite number of elements $x_1,\ldots,x_n\in M$, then $x\in B_1$.  Note that $B$ is  mixing in $M$. By Corollary~\ref{C:strong mixing}, $x\in B$. Let $b_i=\E_B(x_i)$ for $1\leq i\leq n$. Applying $\E_B$ to both sides of the inclusion $B_1x\subset\sum_{i=1}^nx_iB_1$, we have $B_1x\subset\sum_{i=1}^nb_iB_1$. Since $B_1$ is weakly
 mixing in $B$, $x\in B_1$ by Corollary~\ref{C:weak mixing}.

5.\, Suppose $B_1$ is mixing in $B$ and $u_n$ is
a sequence of unitary operators in $B_1$ with
$\lim_{n\rightarrow\infty}u_n=0$ in the weak operator topology.
For $x,y\in M$, we have
\[
\lim_{n\rightarrow\infty} \|\E_B(xu_ny)-\E_B(x)u_n\E_B(y)\|_2=0
\]
since $B$ is mixing in $M$. Applying $\E_{B_1}$ to
$\E_B(xu_ny)-\E_B(x)u_n\E_B(y)$, we have
\begin{equation}\label{E:B in M}
\lim_{n\rightarrow\infty}
\|\E_{B_1}(xu_ny)-\E_{B_1}(\E_B(x)u_n\E_B(y))\|_2=0
\end{equation}
 Since $B_1$ is mixing in $B$,
\begin{equation}\label{E:B1 in B}
\lim_{n\rightarrow\infty}
\|\E_{B_1}(\E_B(x)u_n\E_B(y))-\E_{B_1}(x)u_n\E_{B_1}(u)\|_2=0.
\end{equation}
Combining (\ref{E:B in M}) and (\ref{E:B1 in B}), we have
\[\lim_{n\rightarrow\infty}
\|\E_{B_1}(xu_ny)-\E_{B_1}(x)u_n\E_{B_1}(u)\|_2=0,
\] which implies that $B_1$ is mixing in $M$.

\end{proof}

\begin{Remark}\emph{ Suppose $B_i$ is a diffuse von Neumann
subalgebra of $M_i$ for $i=1,2$. If $B_1\neq M_1$ or $B_2\neq
M_2$, then $B_1\tensor B_2$ is not a mixing von
Neumann subalgebra of $M_1\tensor M_2$ by Proposition~\ref{P:subalgebras of mixing}. On the other hand, it is easy to check that $B_1\tensor B_2$ is weakly mixing in $M_1\tensor M_2$ if $B_1$ and $B_2$ are weakly mixing in $M_1$ and $M_2$, respectively. This gives  examples of weakly mixing but not mixing subalgebras.}
\end{Remark}

Note that in the proof of statement 4 of Proposition~\ref{P:subalgebras of mixing}, we use an equivalent condition of weak mixing (Corollary~\ref{C:weak mixing}) instead of the definition. The essential difficulty is that in the definition of weak mixing, we do not assume that $\lim_{n\rightarrow\infty} u_n=0$ in the weak operator topology. However, we have the following result.

\begin{Proposition}\label{L:unitary sequences of weakly mixing}Let $M$ be a type ${\rm II}_1$ factor with the faithful normal trace $\tau$, and let
$B$ be a proper subfactor of $M$. If $\{u_n\}$ is a
sequence of unitary operators in $B$ such that for all elements
$x,y$ in $M$ with $\E_B(x)=\E_B(y)=0$, one has
\[\lim_{n\rightarrow \infty}\|\E_B(xu_ny)\|_2=0,
\] then
$\displaystyle\lim_{n\rightarrow \infty}u_n=0$ in the weak
operator topology.
\end{Proposition}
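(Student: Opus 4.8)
The plan is to prove the contrapositive, or rather to suppose that $u_n$ does not converge to $0$ weakly and derive that the weak-mixing-type limit must fail for some $x,y \in M \ominus B$. If $u_n \not\to 0$ weakly, then after passing to a subsequence there is some element $b \in B$ with $\tau(u_n b^*)$ bounded away from $0$; equivalently, the image $z$ of $(u_n)$ in the ultrapower $B^\omega$ is not orthogonal to $B$, so $\E_{B^\omega}(z) \neq 0$. The goal is to manufacture from this a nonzero obstruction inside $M \ominus B$, using crucially that $B$ is a \emph{proper subfactor} of a $\rm{II}_1$ factor $M$.

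First I would work in the ultrapower $M^\omega$ and set $z = (u_n) \in B^\omega \subseteq M^\omega$, a unitary with $\E_{B^\omega}(z) \neq 0$. The hypothesis $\lim_n \|\E_B(x u_n y)\|_2 = 0$ for all $x,y \in M \ominus B$ translates, exactly as in the proof of Theorem~\ref{L:mixing to arbitary sequences}, into the statement $\E_{B^\omega}(x z y) = 0$ for all $x,y \in M \ominus B$. So the task is to show this \emph{forces} $\E_{B^\omega}(z)=0$, contradicting our assumption. The leverage comes from the factor condition: since $B$ is a proper subfactor of the $\rm{II}_1$ factor $M$, the relative commutant and the trace structure let me produce nonzero elements of $M \ominus B$ that ``detect'' the $B$-component of $z$. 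Concretely, because $B \neq M$ there exists a nonzero $x_0 \in M \ominus B$, and because $M$ is a factor the central-value / uniqueness-of-trace arguments (as used in the proof of the $\Gamma(B)=0$ proposition earlier in this section) give me enough room.

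The cleanest route I would try is to pick $v = \E_{B^\omega}(z) \in B^\omega$, which is a nonzero contraction commuting appropriately with $B$, and test the identity $\E_{B^\omega}(x z y) = 0$ against well-chosen $x, y \in M \ominus B$ built from a single fixed nonzero $x_0 \in M \ominus B$ and elements of $B$. Writing $z = v + (z - v)$ with $z - v \in B^\omega \ominus B$, the term involving $z-v$ is killed by the established weak-mixing identity (this is precisely where Theorem~\ref{L:mixing to arbitary sequences} / the relative weak mixing machinery applies, since $z-v$ is orthogonal to $B$), so one is left with an identity purely about $v \in B$. Since $B$ is a subfactor, choosing $x = x_0 b$ and $y = b' x_0^*$ with $b, b' \in B$ and computing $\tau_\omega$ of the relevant products should reduce to a positive multiple of $\|v\|_2^2 \|x_0\|_2^2$ being zero, forcing $v = 0$ — the desired contradiction.

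The hard part will be the last reduction: arranging $x, y \in M \ominus B$ so that the cross terms between $x_0$ and the $B$-valued $v$ collapse to something strictly positive rather than vanishing by accident. This is where the factoriality of both $B$ and $M$ must be used essentially — over a general inclusion one could have the $B$-component of $z$ sit in a corner where $\E_B(x_0^* b\, x_0)$ degenerates, and indeed the statement is false without the factor hypothesis (a central projection in $B$ gives a nonzero weakly-convergent-away-from-zero unitary sequence that still satisfies the mixing-type limit). I expect the proof to hinge on the uniqueness of the trace on $M$ to split $\tau_\omega(x_0^* v x_0 v^*)$ (or a similar expression) as $\|x_0\|_2^2 \cdot \tau_\omega(v v^*)$ up to manageable error, exactly mirroring the centralizing-sequence computation already carried out for the $\Gamma(B)=0$ result.
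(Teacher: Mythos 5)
Your overall strategy --- pass to an ultrafilter limit $b$ of $\{u_n\}$, show $\E_B(xby)=0$ for all $x,y\in M\ominus B$, and then use factoriality to force $b=0$ --- matches the paper's, but two steps do not hold up as written. First, you kill the term $\E_{B^\omega}(x(z-v)y)$ by appealing to Theorem~\ref{L:mixing to arbitary sequences}; that theorem assumes $B$ is \emph{mixing} in $M$, i.e.\ that the limit holds for \emph{every} WOT-null sequence of unitaries in $B$, whereas here it is hypothesized only for the single sequence $\{u_n\}$, so you have no control over an arbitrary element of $B^\omega\ominus B$ such as $z-v$. The detour is also unnecessary: setting $b=\lim_{n\to\omega}u_n$ in the weak operator topology (so $b=v\in B$), separate WOT-continuity of multiplication and normality of $\E_B$ give $\E_B(xby)=\lim_{n\to\omega}\E_B(xu_ny)=0$ directly, which is exactly how the paper obtains this identity.

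The genuine gap is the final step, which you yourself flag as ``the hard part'' and do not carry out. The computation you propose to imitate --- splitting $\tau(x_0^*x_0\,vv^*)$ as $\tau(x_0^*x_0)\tau(vv^*)$ by uniqueness of the trace --- worked in the $\Gamma(B)=0$ proposition only because $\{b_nb_n^*\}$ there was a \emph{centralizing sequence}; for a single fixed nonzero $v\in B$ no such factorization is available, so the proposed estimate does not produce $\|v\|_2^2\|x_0\|_2^2$. Your test elements $x=x_0b$, $y=b'x_0^*$ do yield $\E_B(x_0\,bvb'\,x_0^*)=0$ for all $b,b'\in B$, and one can finish from there (the WOT-closed two-sided ideal of the factor $B$ generated by $v\neq 0$ is all of $B$, so $1$ lies in the weak closure of $\mathrm{span}\,BvB$, whence $\E_B(x_0x_0^*)=0$, forcing $x_0=0$ and contradicting $B\neq M$), but that argument is not in your write-up. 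The paper finishes differently: it takes the polar decomposition $b=u|b|$, uses faithfulness of $\E_B$ on positive elements to get $|b|y=0$ for every $y\in M\ominus B$, and then invokes $B'\cap M=\cc 1$ (a consequence of the singularity of the weakly mixing subalgebra $B$) to conclude $|b|=0$. As it stands, your proposal identifies the right intermediate identity but does not prove the proposition.
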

\begin{proof}Note that $B$ is weakly mixing in $M$ and hence
singular in $M$. In particular $B'\cap M=\cc 1$. Let $\omega$ be a
non principal ultrafilter of $\mathbb{N}$ and suppose
$\lim_{n\rightarrow \omega}u_n=b$ in the weak operator topology.
For $x,y$ in $M$ with $\E_B(x)=\E_B(y)=0$,
\[\E_B(xby)=\lim_{n\rightarrow\omega}\E_B(xu_ny)=0.\]
 Let
$b=u|b|$ be the polar decomposition of $b$. Note that
$\E_B(xu^*)=\E_B(x)u^*=0$. Hence,
\[\E_B(x|b|y)=\E_B(xu^*u|b|y)=\E_B(xu^*by)=0.\] Let
$x=y^*$. Then $\E_B(y^*|b|y)=0$ and hence $y^*|b|y=0$. This implies
that $|b|y=0$ for all $y\in M$ with $\E_B(y)=0$. For $b'\in B$,
$\E_B(b'y)=b'\E_B(y)=0$. Hence, $|b|b'y=0$. This implies that
$|b|R(b'y)=0$, where $R(b'y)$ is the range projection of $b'y$.
Let $p=\vee_{b'\in B}R(b'y)$. Then $|b|p=0$. On the other hand,
$0\neq p\in B'\cap M$. So $p=1$. So $|b|=0$ and $b=0$. Therefore, \
$\lim_{n\rightarrow \omega}u_n=0$ in the weak operator topology.
 Since
$\omega$ is an arbitrary non principal ultrafilter of
$\mathbb{N}$, $\lim_{n\rightarrow \infty}u_n=0$ in the weak
operator topology.
\end{proof}

\subsection{Further examples of mixing subalgebras}  \label{section:examples}

\begin{Lemma}\label{L:conditional expectation and compactness}
 Let $B$ be a von Neumann subalgebra of $M$.
Then the following conditions are equivalent:
\begin{enumerate}
\item $B$ is atomic type ${\rm I}$;
\item for every \emph{bounded} sequence $\{x_n\}$ in $M$ with
$\lim_{n\rightarrow \infty}x_n=0$ in the weak operator topology,
$\lim_{n\rightarrow \infty}\|\E_B(x_n)\|_2=0$.
\end{enumerate}
\end{Lemma}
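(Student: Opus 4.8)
The plan is to establish the two implications separately. Throughout I would write $C=\sup_n\|x_n\|<\infty$ for the uniform bound appearing in statement (2), and use repeatedly that $\E_B$ is $\|\cdot\|$-contractive and a $B$-bimodule map, so that $\|\E_B(x_n)\|\le C$ as well. The structural input is the standard fact that a finite von Neumann algebra $B$ is atomic type I exactly when $B\cong\bigoplus_i M_{n_i}(\cc)$, equivalently when every nonzero projection of $B$ dominates a minimal projection. For the reverse direction I would invoke the existence of a Haar unitary in a diffuse finite von Neumann algebra, as used in the proof of Lemma~\ref{L: haar unitary operators}.

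For ``$1\Rightarrow2$'' I would fix matrix units $\{e^{(i)}_{jk}\}$ in each block $M_{n_i}(\cc)$, set $t_i=\tau(e^{(i)}_{jj})$, and note that these vectors are orthogonal in $L^2(B)$ with $\|e^{(i)}_{jk}\|_2^2=t_i$, so $\{t_i^{-1/2}e^{(i)}_{jk}\}$ is an orthonormal basis. Using the bimodule identity $\tau(e^{(i)}_{kj}\E_B(x_n))=\tau(e^{(i)}_{kj}x_n)$, Parseval gives $\|\E_B(x_n)\|_2^2=\sum_{i,j,k}t_i^{-1}|\tau(e^{(i)}_{kj}x_n)|^2$. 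The idea is then to split this sum at a finite set $F$ of blocks. Writing $p_i$ for the central projection onto block $i$, the tail equals $\sum_{i\notin F}\tau\big((\E_B(x_n)p_i)^*\E_B(x_n)p_i\big)\le C^2\sum_{i\notin F}t_i n_i$, since a Hilbert--Schmidt norm is bounded by $n_i^{1/2}$ times the operator norm; as $\sum_i t_i n_i=\tau(1_B)<\infty$, this tail is small uniformly in $n$ once $F$ is large. On the finite part indexed by $F$ there are only finitely many coefficients, and each $\tau(e^{(i)}_{kj}x_n)\to0$ because $x_n\to0$ in the weak operator topology. An $\epsilon/2$--$\epsilon/2$ combination then yields $\|\E_B(x_n)\|_2\to0$.

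For ``$2\Rightarrow1$'' I would argue the contrapositive. If $B$ is not atomic type I, then some nonzero projection $p_0\in B$ dominates no minimal projection, so the corner $p_0Bp_0$ is a diffuse finite von Neumann algebra and hence contains a Haar unitary $w$. Setting $x_n=w^n\in p_0Bp_0\subseteq B\subseteq M$, the sequence $\{x_n\}$ is bounded and $x_n\to0$ in the weak operator topology; but $\E_B(x_n)=x_n$ since $x_n\in B$, so $\|\E_B(x_n)\|_2=\|w^n\|_2=\tau(p_0)^{1/2}\not\to0$, contradicting (2).

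The main obstacle is the tail estimate in ``$1\Rightarrow2$'': when $B$ has infinitely many blocks the orthonormal expansion has infinitely many terms, and mere pointwise decay of the coefficients is insufficient. The emphasized boundedness hypothesis is precisely what converts pointwise decay into the result, by furnishing the uniform-in-$n$ control of the tail through $\sum_i t_i n_i=\tau(1_B)<\infty$. The reverse implication is comparatively soft, reducing to producing a Haar unitary in a diffuse corner of $B$.
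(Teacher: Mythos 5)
Your proof is correct and follows essentially the same route as the paper's: for $1\Rightarrow 2$ both arguments truncate to finitely many matrix blocks via a finite-rank central projection, use weak convergence of the finitely many remaining coefficients, and control the tail uniformly in $n$ by the operator-norm bound times $\tau\bigl(1-\sum_{i\in F}p_i\bigr)^{1/2}$ (your Parseval expansion in matrix units is just a more explicit rendering of the paper's observation that $x\mapsto p_k\E_B(x)$ has finite rank). For $2\Rightarrow 1$ both produce a Haar unitary in a diffuse part of $B$ (you in a corner $p_0Bp_0$, the paper in a central cut-down) whose powers tend to $0$ weakly while their $\|\cdot\|_2$-norms stay bounded below.
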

\begin{proof}
 \noindent $1\Rightarrow
2$:\, Since $B$ is a finite atomic type ${\rm I}$ von Neumann
algebra, $B=\oplus_{k=1}^N M_{n_k}(\cc)$, where $1\leq N\leq
\infty$. So there exists a sequence of finite rank central
projections $p_n\in B$ such that $p_n\rightarrow 1$ in the strong
operator topology. Therefore, $\tau(p_n)\rightarrow 1$.  Let $\{x_n\}$'
 be a bounded sequence in $M$ with $x_n\rightarrow 0$ in the weak
operator topology, and let $\epsilon>0$. We may assume that
$\|x_n\|\leq 1$. Choose $p_k$ such that
$\tau(1-p_k)<\epsilon^2/4$. Note that the map $x\in M\rightarrow
p_k\E_B(x)$ is a finite rank operator. There is an $m>0$ such that
for all $n\geq m$, $\|p_k\E_B(x_n)\|_2<\epsilon/2$. Then
\[\|\E_B(x_n)\|_2\leq \|p_k\E_B(x_n)\|_2+\|(1-p_k)\E_B(x_n)\|_2\leq
\epsilon/2+\epsilon/2=\epsilon.
\] This proves that $\|\E_B(x_n)\|_2\rightarrow 0$.

\noindent $2\Rightarrow 1$:\, If $M$ is not atomic type ${\rm I}$.
Then there is a nonzero central projection $p\in M$ such that $p
M$ is diffuse. So there is a Haar unitary operator $v\in pM$. Note
that $v^n\rightarrow 0$ in the weak operator topology. But
$\|\E_B(v^n)\|_2=\|v^n\|_2=\tau(p)^{1/2}$ does not converge to $0$.
This contradicts to 2.
\end{proof}

\begin{Proposition}\label{P:amalgamated} Let $M=M_1*_A M_2$ be the amalgamated free
product of diffuse finite von Neumann algebras $(M_1,\tau_1)$ and
$(M_2,\tau_2)$ over an atomic finite von Neumann algebra $A$. Then
 $M_1$ is a mixing von Neumann subalgebra of $M$.
\end{Proposition}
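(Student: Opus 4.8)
The plan is to verify the equivalent form of Definition~\ref{D:mixing}: for $x,y\in M\ominus M_1$ and any sequence of unitaries $u_n\in M_1$ with $u_n\to 0$ weakly, show $\|\E_{M_1}(xu_ny)\|_2\to 0$. Write $M_i^\circ=\{a\in M_i:\E_A(a)=0\}$ and recall the standard reduced-word structure of $M=M_1*_A M_2$: $L^2(M)\ominus L^2(M_1)$ is the closed linear span of the alternating words $a_1\cdots a_p$ ($a_k\in M_{i_k}^\circ$, $i_1\neq i_2\neq\cdots$) other than the single letters from $M_1^\circ$, while $\E_{M_1}$ annihilates every reduced word of length $\geq 2$ and every single letter from $M_2^\circ$. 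Using Remark~\ref{R:mixing} and a $3\epsilon$-argument I would first reduce to reduced words: given $x\in M\ominus M_1$, approximate it in $\|\cdot\|_2$ by a finite combination $\tilde x$ of reduced words and replace $\tilde x$ by $(1-\E_{M_1})\tilde x$, which is again a finite combination of reduced words lying in $M\ominus M_1$ and satisfies $\|x-(1-\E_{M_1})\tilde x\|_2<\epsilon$; the resulting errors in $\E_{M_1}(xu_ny)$ are bounded by a fixed multiple of $\epsilon$ because $\|u_n\|=1$ and the approximants have fixed operator norm. By bilinearity this leaves the case $x=a_1\cdots a_p$, $y=c_1\cdots c_q$, single reduced words with $\E_{M_1}(x)=\E_{M_1}(y)=0$.

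The first substantive step exploits that $A$ is atomic. As $\{u_n\}$ is bounded and tends to $0$ weakly, Lemma~\ref{L:conditional expectation and compactness} (with $B=A$) gives $\|\E_A(u_n)\|_2\to 0$. Splitting $u_n=\E_A(u_n)+u_n^\circ$ with $u_n^\circ\in M_1^\circ$, the contribution of the first summand is controlled directly,
\[
\|\E_{M_1}(x\,\E_A(u_n)\,y)\|_2\leq\|x\|\,\|\E_A(u_n)\|_2\,\|y\|\To 0,
\]
so it remains to show $\|\E_{M_1}(xu_n^\circ y)\|_2\to 0$, where $u_n^\circ$ is now a bounded letter in $M_1^\circ$ converging to $0$ weakly.

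The last step, and what I expect to be the main obstacle, is to reduce the word $a_1\cdots a_p\,u_n^\circ\,c_1\cdots c_q$ across the junction and show each surviving piece vanishes. If $a_p,c_1\in M_2^\circ$ the whole product is already a reduced word of length $\geq 3$, so $\E_{M_1}$ kills it. If instead $a_p\in M_1^\circ$ — which forces $p\geq 2$, since $x\in M\ominus M_1$ rules out a single $M_1^\circ$ letter — then $a_pu_n^\circ\in M_1$ splits as $(a_pu_n^\circ)^\circ+\E_A(a_pu_n^\circ)$; the $(a_pu_n^\circ)^\circ$-part leaves a reduced word whose neighbours $a_{p-1},c_1$ lie in $M_2^\circ$, hence of length $\geq 2$ and annihilated by $\E_{M_1}$, while the $\E_A(a_pu_n^\circ)$-part is an $A$-valued factor that is norm-null: $a_pu_n^\circ\to 0$ weakly, so $\|\E_A(a_pu_n^\circ)\|_2\to 0$ by Lemma~\ref{L:conditional expectation and compactness}, and the surrounding letters have fixed operator norm while $\E_{M_1}$ is $\|\cdot\|_2$-contractive. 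The remaining junction types $c_1\in M_1^\circ$ and $a_p,c_1\in M_1^\circ$ are identical, producing the factors $\E_A(u_n^\circ c_1)$ and $\E_A(a_pu_n^\circ c_1)$. The genuine difficulty is the bookkeeping: verifying in each case that $x,y\in M\ominus M_1$ supply enough $M_2^\circ$-padding for the reduced-word remainders to have length $\geq 2$, and that $u_n^\circ$ absorbs at most one neighbour on each side, so that a single use of atomicity finishes each branch. Summing the finitely many terms gives $\|\E_{M_1}(xu_n^\circ y)\|_2\to 0$, and with the previous paragraph, $M_1$ is mixing in $M$.
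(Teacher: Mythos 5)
Your argument is correct and follows essentially the same route as the paper: decompose $L^2(M)\ominus L^2(M_1)$ into the alternating word spaces, split $u_n$ into $\E_A(u_n)+u_n^\circ$, kill the $\E_A$-parts via Lemma~\ref{L:conditional expectation and compactness} applied to the atomic algebra $A$, and observe that the surviving reduced words are annihilated by $\E_{M_1}$. Your junction bookkeeping for words of arbitrary length just fills in the cases the paper dismisses with ``the other cases can be proved similarly.''
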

\begin{proof} The following spaces are mutually orthogonal with
respect to the unique trace $\tau$ on $M$:
$M_2\ominus A$, $(M_1\ominus A)\otimes (M_2\ominus A)$,
$(M_2\ominus A)\otimes (M_1\ominus A)$, $(M_1\ominus A)\otimes
(M_2\ominus A)\otimes (M_1\ominus A)$, $\cdots$. Furthermore, the
trace-norm closure of the linear span of the above spaces is
$L^2(M,\tau)\ominus L^2(M_1,\tau)$. Suppose $\{u_n\}$ is a sequence
of unitary operators in $M_1$ satisfying
$\displaystyle\lim_{n\rightarrow \infty}u_n=0$ in the weak
operator topology. To prove $M_1$ is a mixing von Neumann
subalgebra of $M$, we need only to show for  $x$ in each of the
above space, we have
\[\lim_{n\rightarrow \infty}\|\E_{M_1}(xu_nx^*)\|_2=0.
\] We will give the proof for $x$ in one of the following spaces:
 $(M_1\ominus A)\otimes (M_2\ominus
A)$, $(M_2\ominus A)\otimes (M_1\ominus A)$. The other cases can
be proved similarly.

Suppose $x=x_1y_1$, where $x_1\in M_1\ominus A$ and $y_1\in
M_2\ominus A$. Then
\[xu_nx^*=x_1y_1(u_n-\E_A(u_n))y_1^*x_1+x_1y_1\E_A(u_n)y_1^*x_1^*.\]
Note that $\E_{M_1}(x_1y_1(u_n-\E_A(u_n))y_1^*x_1)=0$ and
$\lim_{n\rightarrow\infty}\|E_A(u_n)\|_2=0$ by
Lemma~\ref{L:conditional expectation and compactness}. So
\[\lim_{n\rightarrow \infty}\|\E_{M_1}(xu_nx^*)\|_2=0.
\]

Suppose $x=y_1x_1$, where $x_1\in M_1\ominus A$ and $y_1\in
M_2\ominus A$. Then
\[xu_nx^*=y_1x_1u_nx_1^*y_1=y_1(x_1u_nx_1^*-\E_A(x_1u_nx_1^*))y_1^*-y_1\E_A(x_1u_nx_1^*)y_1^*.\]
Note that $\E_{M_1}(y_1(x_1u_nx_1^*-\E_A(x_1u_nx_1^*))y_1^*)=0$ and
$\lim_{n\rightarrow\infty}\|\E_A(x_1u_nx_1^*)\|_2=0$ by
Lemma~\ref{L:conditional expectation and compactness}. So
\[\lim_{n\rightarrow \infty}\|\E_{M_1}(xu_nx^*)\|_2=0.
\]
\end{proof}

Note, in particular, that Proposition \ref{P:amalgamated} implies that if $A$ is a diffuse mixing masa in a finite von Neumann algebra $M_1$, and $M_2$ is also diffuse, then $A$ is mixing in the free product $M_1 \ast M_2$.

Now let $B$ be a diffuse finite von Neumann algebra with a faithful normal trace $\tau$, and let $G$ be a countable discrete group. Let $\ast_{g\in G}B_g$ be the free product von Neumann algebra, where $B_g$ is a copy of $B$ for each $g$.   The shift transformation $\sigma(g)((x_h))=(x_{g^{-1}h})$ defines an action of $G$ on $\ast_{g\in G}B_g$. Let $M=\ast_{g\in G}B_g\rtimes G$. Then $M$ is a type ${\rm II}_1$ factor and we can identify $B$ with $B_e$.

\begin{Proposition}
 The above algebra $B$ is a mixing von Neumann subalgebra of $M$.
\end{Proposition}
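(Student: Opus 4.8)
The plan is to combine the free‑product mixing already furnished by Proposition~\ref{P:amalgamated} with a direct computation in the crossed product. Write $N=\ast_{g\in G}B_g$, let $\lambda_g\in M$ be the canonical unitaries implementing the shift, so that $\lambda_g n\lambda_g^\ast=\sigma_g(n)$ for $n\in N$, and recall that the trace‑preserving conditional expectation onto $B=B_e$ factors as $\E_B=\E_{B_e}^N\circ\E_N$, where $\E_N\colon M\to N$ annihilates the $\lambda_g$ with $g\neq e$. Using the equivalent form of mixing recorded after Definition~\ref{D:mixing} together with Remark~\ref{R:mixing}, it suffices to prove that $\|\E_B(x u_n y)\|_2\to 0$ for every weakly null sequence of unitaries $u_n\in B_e$ and for $x,y$ ranging over the bounded $\|\cdot\|_2$‑spanning family of $M\ominus B$ consisting of the elements $w\lambda_g$, where $w$ is a reduced word in $N$ (an alternating product $\mathring a_1\cdots\mathring a_k$ with $\mathring a_i\in B_{g_i}\ominus\cc 1$ and $g_i\neq g_{i+1}$) subject to $\E_B(w\lambda_g)=0$.

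Taking $x=w_1\lambda_{g_1}$ and $y=w_2\lambda_{g_2}$, the crossed‑product relations give $x u_n y=w_1\,\sigma_{g_1}(u_n)\,\sigma_{g_1}(w_2)\,\lambda_{g_1g_2}$, whence $\E_B(xu_ny)=\delta_{g_1g_2,e}\,\E_{B_e}\big(w_1\,\sigma_{g_1}(u_n)\,\sigma_{g_1}(w_2)\big)$. When $g_1g_2\neq e$ this vanishes for every $n$, so the whole matter reduces to the case $g_2=g_1^{-1}$. Setting $g:=g_1$ and $v_n:=\sigma_g(u_n)$, and noting that $\sigma_g$ preserves $\tau$ and hence carries weakly null sequences to weakly null sequences, I am reduced to the following claim: if $v_n$ is a weakly null sequence of unitaries in a single factor $B_g$ and $a,b\in N$, then $\|\E_{B_e}(a v_n b)\|_2\to 0$, where in the case $g=e$ one additionally has $\E_{B_e}(a)=\E_{B_e}(b)=0$ (inherited from $\E_B(x)=\E_B(y)=0$). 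The subcase $g=e$ is exactly the assertion that $B_e$ is mixing in $N$, which follows from Proposition~\ref{P:amalgamated} applied to the decomposition $N=B_e\ast\big(\ast_{g\neq e}B_g\big)$ over $\cc$: both factors are diffuse and the amalgam $\cc$ is atomic.

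The remaining subcase $g\neq e$ I would settle by the reduced‑word bookkeeping familiar from the proof of Proposition~\ref{P:amalgamated}. By linearity and boundedness I may take $a,b$ to be reduced words, and I split $v_n=\mathring v_n+\tau(u_n)\mathbf 1$ with $\mathring v_n\in B_g\ominus\cc 1$; the scalar term contributes $\tau(u_n)\E_{B_e}(ab)\to 0$, since $\tau(u_n)\to 0$. For the centered part I reduce the product $a\,\mathring v_n\,b$: a merge of $\mathring v_n$ with an adjacent letter $\ell$ can occur only when $\ell\in B_g$, and each such merge produces a scalar of the form $\tau(\ell\,\mathring v_n)=\langle\mathring v_n,\ell^\ast\rangle$, which tends to $0$ by weak nullity of $\mathring v_n$ paired against the fixed letter $\ell$ (the same vanishing holds for the iterated merge on the opposite side, via $\langle\mathring v_n,(\ell'\ell)^\ast\rangle\to0$). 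After the at most two possible merges, every surviving term is a genuinely reduced word still containing a letter from $B_g$ with $g\neq e$, hence is not a single $B_e$‑letter and is annihilated by $\E_{B_e}$. Summing the vanishing scalar corrections against the zero reduced terms yields $\|\E_{B_e}(a v_n b)\|_2\to 0$. The same bookkeeping re‑proves the subcase $g=e$: there the hypotheses $\E_{B_e}(a)=\E_{B_e}(b)=0$ guarantee that after merging the weakly null letter with an adjacent $B_e$‑letter there always remains a flanking non‑$B_e$‑letter, so the surviving reduced words again have length at least two.

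The step I expect to be the main obstacle is precisely this last subcase $g\neq e$: it is the genuinely ``crossed‑product'' contribution, and it is not subsumed by the mixing of $B_e$ inside $N$, since the middle letter $v_n$ now lives in a different free factor $B_g$ while one is still computing $\E_{B_e}$. The crux is to argue that a weakly null letter cannot be absorbed into a nonzero scalar except through pairings $\langle\mathring v_n,\ell^\ast\rangle$ with fixed letters, all of which vanish in the limit; organizing the finitely many reductions together with the accompanying bounded error terms is where the care is needed, with Lemma~\ref{L:conditional expectation and compactness} (applied with the atomic amalgam $\cc$) and the weak nullity of $u_n$ supplying exactly the vanishing scalars required.
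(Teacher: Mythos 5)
Your argument is correct, and it is in the same spirit as the paper's proof --- both rest on the observation that, after conjugating by the group unitaries, the weakly null sequence lands in a single free factor $B_{g}$, so that freeness annihilates the reduced words while weak nullity kills the scalar pairings $\tau(\ell\,\mathring v_n)$ produced by merging. The difference is one of completeness, and here your version is actually the stronger one. The paper only verifies the mixing condition on elements of the form $x_h v_g$ with $x_h$ a \emph{single} letter of $B_h$ and $g\neq e$, and then asserts that the linear span of such elements is dense in $M\ominus B$; read literally that span misses both the longer reduced words and the elements of $N\ominus B_e$ (the case $g=e$), so the paper's proof is really a sketch of the ``genuinely crossed-product'' contribution that you isolate as the subcase $g\neq e$. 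You treat the full spanning family $w\lambda_g$ with $w$ an arbitrary reduced word, observe that only $g_1g_2=e$ survives the conditional expectation onto $N$, dispose of the internal case $g_1=e$ by citing Proposition~\ref{P:amalgamated} for the decomposition $N=B_e\ast\bigl(\ast_{g\neq e}B_g\bigr)$ (a reduction the paper does not make, and which buys you that subcase for free), and then carry out the word bookkeeping for $g_1\neq e$ exactly where the paper's computation lives. The only point worth flagging is cosmetic: in the merging step you should note that $\mathring v_n=v_n-\tau(u_n)1$ is itself weakly null because $\sigma_{g_1}$ is trace-preserving, so that the iterated pairings $\tau\bigl(a_k\mathring v_n b_1\bigr)$ indeed vanish; you say this implicitly and it is fine.
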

\begin{proof}
 Suppose $v_g$ is the classical unitary operator corresponding to the action $g$ in $M$. Then for every $(x_h)$ in $\ast_{g\in G}B_g$,
\[
 v_g(x_h)v_g^{-1}=(\sigma_g(x_h))=(x_{g^{-1}h}).
\]
 Suppose $b_n\in B=B_e$, $b_n\rightarrow 0$ in the weak operator topology,  $g\neq e$, and $x_h\in B_h$. We may assume $\tau(b_n)=0$ for each $n$. Note that
\[
 x_hv_gv_nv_g^*x_h^*=x_h\sigma_g(b_n)x_h^*.
\]
 If $h\neq e$, it is clear that $x_h\sigma_g(b_n)x_h^*$ is free with $B=B_e$ and hence orthogonal to $B$.  If $h=e$, direct computations show that $x_e\sigma_g(b_n)x_e^*$ is orthogonal to $B=B_e$.  So we have
\[
\E_{B}(x_hv_gb_nv_g^*x_h^*)=\E_{B}(x_h\sigma_g(b_n)x_h^*)=\tau(x_h\sigma_g(b_n)x_h^*)=\tau(\sigma_g(b_n)x_h^*x_h)=\tau(b_n\sigma_{g^{-1}}(x_h^*x_h)),
\]
and this last expression above converges to zero.  Note that the linear span of the  above elements $x_hv_g$ is dense in $M\minus B$ in the weak operator topology. This proves that $B$ is mixing in $M$.
\end{proof}


\begin{thebibliography}{10}



\bibitem{BR}
V. Bergelson and J. Rosenblatt,
\newblock Mixing actions of groups.
\newblock {\em Illinois J. Math}, 32(1):65--80, 1988.


\bibitem{Chr}
E. Christensen,
\newblock Subalgebras of a finite algebra.
\newblock {\em Math. Ann.}, 243(1):17--29, 1979.

\bibitem{Dix1}
J. Dixmier,
\newblock Sous-anneaux ab\'eliens maximaux dans les facteurs de type fini.
\newblock {\em Ann. of Math. (2)}, 59:279--286, 1954.



\bibitem{G-W} P. Grossman and A. Wiggins, Strong singularity for subfactors, \emph{preprint}, arXiv:math/0703673 2008.

\bibitem{Ha}
P.~R. Halmos,
\newblock On automorphisms of compact groups.
\newblock {\em Bull. Amer. Math. Soc.}, 49:619--624, 1943.



\bibitem{J-S}
P. Jolissaint and Y. Stalder,
\newblock Strongly singular {MASA}s and mixing actions in finite von {N}eumann
  algebras.
\newblock {\em Ergodic Theory Dynam. Systems}, 28(6):1861--1878, 2008.

\bibitem{Jones}
V.~F.~R. Jones,
\newblock Index for subfactors.
\newblock {\em Invent. Math.}, 72(1):1--25, 1983.

\bibitem{KR}
R.~V. Kadison and J.~R. Ringrose,
\newblock {\em Fundamentals of the theory of operator algebras. {V}ol. {II}},
  volume~16.
\newblock American Mathematical Society, Providence, RI, 1997.


\bibitem{KS}
B. Kitchens and K. Schmidt,
\newblock Automorphisms of compact groups.
\newblock {\em Ergodic Theory Dynam. Systems},  9  (1989),  no. 4, 691--735.



\bibitem{Nad}
M.~G. Nadkarni,
\newblock {\em Spectral theory of dynamical systems}.
\newblock Birkh\"auser Advanced Texts: Basler Lehrb\"ucher. Birkh\"auser
  Verlag, Basel, 1998.

\bibitem{P-P}
M. Pimsner and S. Popa,
\newblock Entropy and index for subfactors.
\newblock {\em Ann. Sci. \'Ecole Norm. Sup. (4)}, 19(1):57--106, 1986.



\bibitem{Po1}
S. Popa,
\newblock A short proof of ``injectivity implies hyperfiniteness'' for finite
  von {N}eumann algebras.
\newblock {\em J. Operator Theory}, 16(2):261--272, 1986.

\bibitem{Po2}
S. Popa,
\newblock On a class of type {${\rm II}\sb 1$} factors with {B}etti numbers
  invariants.
\newblock {\em Ann. of Math. (2)}, 163(3):809--899, 2006.

\bibitem{Po3}
S. Popa,
\newblock Strong rigidity of {$\rm II\sb 1$} factors arising from malleable
  actions of {$w$}-rigid groups. {I}.
\newblock {\em Invent. Math.}, 165(2):369--408, 2006.


\bibitem{RSS}
G. Robertson, A.~M. Sinclair, and R.~R. Smith,
\newblock Strong singularity for subalgebras of finite factors.
\newblock {\em Internat. J. Math.}, 14(3):235--258, 2003.



\bibitem{Sa}
S. Sakai,
\newblock ``The Theory of W* Algebras'',
\newblock Lecture notes, Yale University, 1962.

\bibitem{S-S}
A.~M. Sinclair and R.~R. Smith,
\newblock Strongly singular masas in type {$\rm II\sb 1$} factors.
\newblock {\em Geom. Funct. Anal.}, 12(1):199--216, 2002.

\bibitem{S-S2}
A.~M. Sinclair and R~R. Smith,
\newblock {\em Finite von {N}eumann algebras and masas}, volume 351 of {\em
  London Mathematical Society Lecture Note Series}.
\newblock Cambridge University Press, Cambridge, 2008.

\bibitem{S-W}
A.~M. Sinclair and S.~A. White, A continuous path of singular masas in the hyperfinite ${\rm II}\sb 1$ factor,  \emph{J. Lond. Math. Soc}. (2)  {\bf 75}  (2007),  no. 1, 243--254.

\bibitem{SSWW}
A.~M. Sinclair, R~R. Smith, S~A. White, and A.
Wiggins,
\newblock Strong singularity of singular masas in {${\rm II}\sb 1$} factors.
\newblock {\em Illinois J. Math.}, 51(4):1077--1084, 2007.

\bibitem{Va}
S. Vaes,
\newblock Rigidity results for {B}ernoulli actions and their von {N}eumann
  algebras (after {S}orin {P}opa).
\newblock {\em Ast\'erisque}, (311):Exp. No. 961, viii, 237--294, 2007.
\newblock S{\'e}minaire Bourbaki. Vol. 2005/2006.




\end{thebibliography}

\end{document}